\title[A piecewise contractive dynamical system]{A piecewise contractive
  dynamical system and election methods} 
\date{19 September, 2017}
\author{Svante Janson and Anders \"Oberg}
\address{Svante Janson and Anders \"Oberg, Department of Mathematics, Uppsala University, PO Box 480,
SE-751~06 Uppsala, Sweden}
\email{svante.janson@math.uu.se; anders@math.uu.se}
\subjclass[2010]{37E05, 91B12, 28A78} 
\numberwithin{equation}{section}
\renewcommand\le{\leqslant}
\renewcommand\ge{\geqslant}
\theoremstyle{plain}
\newtheorem{theorem}{Theorem}[section]
\newtheorem{lemma}[theorem]{Lemma}
\newtheorem{corollary}[theorem]{Corollary}
\theoremstyle{definition}
\newtheorem{example}[theorem]{Example}
\newtheorem{problem}[theorem]{Problem}
\newtheorem{remark}[theorem]{Remark}
\theoremstyle{remark}
\newenvironment{romenumerate}[1][-10pt]{
\addtolength{\leftmargini}{#1}\begin{enumerate}
 }{\end{enumerate}}
\newcounter{oldenumi}
{\setcounter{oldenumi}{\value{enumi}}
\begin{romenumerate} \setcounter{enumi}{\value{oldenumi}}}
{\end{romenumerate}}
\newenvironment{alphenumerate}[1][0pt]{
\addtolength{\leftmargini}{#1}\begin{enumerate}
 }{\end{enumerate}}
\newcounter{thmenumerate}
\newenvironment{thmenumerate}
{\setcounter{thmenumerate}{0}%
 \def\item{\par
 \refstepcounter{thmenumerate}\textup{(\roman{thmenumerate})\enspace}}
}
{}
\newcounter{xenumerate}   
\newenvironment{xenumerate}
  {\begin{list}
    {\upshape(\roman{xenumerate})}
    {\setlength{\leftmargin}{0pt}
     \setlength{\rightmargin}{0pt}
     \setlength{\labelwidth}{0pt}
     \setlength{\itemindent}{\labelsep}
     \setlength{\topsep}{0pt}
     \usecounter{xenumerate}} }
  {\end{list}}
\newcommand\pfitemx[1]{\par#1:}
\newcommand\pfitemref[1]{\pfitemx{\ref{#1}}}
\newcounter{pfcase}
\newcommand\newpfcase{\setcounter{pfcase}0}
\newcommand\pfcase[1]{\smallskip\noindent
\refstepcounter{pfcase}%
\emph{Case \arabic{pfcase}: #1}.}
\newcommand\refpfcase{Case \arabic{pfcase}}
\newcommand{\refT}[1]{Theorem~\ref{#1}}
\newcommand{\refC}[1]{Corollary~\ref{#1}}
\newcommand{\refL}[1]{Lemma~\ref{#1}}
\newcommand{\refR}[1]{Remark~\ref{#1}}
\newcommand{\refS}[1]{Section~\ref{#1}}
\newcommand{\refSS}[1]{Section~\ref{#1}}
\newcommand{\refE}[1]{Example~\ref{#1}}
\xdef\klockan{\the\count1.0\the\count255}
\xdef\klockan{\the\count1.\the\count255}\fi
\newcommand\nopf{\qed}   
\newcommand{\sumjo}{\sum_{j=0}^\infty}
\newcommand{\sumk}{\sum_{k=1}^\infty}
\newcommand{\sumiN}{\sum_{i=1}^N}
\newcommand\set[1]{\ensuremath{\{#1\}}}
\newcommand\bigset[1]{\ensuremath{\bigl\{#1\bigr\}}}
\newcommand\Bigset[1]{\ensuremath{\Bigl\{#1\Bigr\}}}
\newcommand\xpar[1]{(#1)}
\newcommand\bigpar[1]{\bigl(#1\bigr)}
\newcommand\Bigpar[1]{\Bigl(#1\Bigr)}
\newcommand\lrpar[1]{\left(#1\right)}
\newcommand\abs[1]{|#1|}
\newcommand\bigabs[1]{\bigl|#1\bigr|}
\newcommand\biggabs[1]{\biggl|#1\biggr|}
\newcommand\lrabs[1]{\left|#1\right|}
\def\rompar(#1){\textup(#1\textup)}    
\newcommand\xpqfrac[2]{(#1)/(#2)}
\newcommand\parfrac[2]{\lrpar{\frac{#1}{#2}}}
\newcommand\Bigparfrac[2]{\Bigpar{\frac{#1}{#2}}}
\def\xexp(#1){e^{#1}}
\newcommand\ceil[1]{\lceil#1\rceil}
\newcommand\floor[1]{\lfloor#1\rfloor}
\newcommand\frax[1]{\{#1\}}
\newcommand\fraxx[1]{\{#1\}_+}
\newcommand\ntoo{\ensuremath{{n\to\infty}}}
\newcommand\Ntoo{\ensuremath{{N\to\infty}}}
\newcommand\downto{\searrow}
\newcommand\upto{\nearrow}
\newcommand\punkt{.\spacefactor=1000}    
\newcommand\ie{i.e\punkt}
\newcommand\eg{e.g\punkt}
\newcommand\viz{viz\punkt}
\newcommand\cf{cf\punkt}
\newcommand{\as}{a.s\punkt}
\newcommand\bbR{\mathbb R}
\newcommand\bbQ{\mathbb Q}
\newcommand\bbZ{\mathbb Z}
\newcounter{CC}
\newcounter{cc}
\newcommand{\cc}{\stepcounter{cc}\ccx} 
\newcommand{\ccx}{c_{\arabic{cc}}}     
\newcommand{\ccdef}[1]{\xdef#1{\ccx}}     
\newcommand\E{\operatorname{\mathbb E{}}}
\newcommand\ga{\alpha}
\newcommand\gb{\beta}
\newcommand\gd{\delta}
\newcommand\gD{\Delta}
\newcommand\gf{\varphi}
\newcommand\gG{\Gamma}
\newcommand\gl{\lambda}
\newcommand\gL{\Lambda}
\newcommand\go{\omega}
\newcommand\gs{\sigma}
\newcommand\eps{\varepsilon}
\newcommand\oi{[0,1]}
\newcommand\oo{[0,1)}
\newcommand\cE{\mathcal E}
\newcommand\cH{\mathcal H}
\newcommand\cM{\mathcal M}
\newcommand\cP{\mathcal P}
\newcommand\bU{\overline U}
\newcommand\bbZgto{\mathbb Z_{>0}}
\newcommand\qw{^{-1}}
\newcommand\qww{^{-2}}
\newcommand\intoi{\int_0^1}
\newcommand\oio{[0,1)}
\newcommand\ooi{(0,1]}
\newcommand\ooio{(0,1)}
\newcommand\ooo{[0,\infty)}
\newcommand\setoi{\set{0,1}}
\newcommand\dd{\,\mathrm{d}}
\newcommand\rhs{right-hand side}
\newcommand\rhox{\bar\rho}
\newcommand\phirho{\phi_\rho}
\newcommand\bI{\bar I}
\newcommand\gol{$\go$-limit}
\newcommand\phragmen{Phragm{\'e}n}
\newcommand\Drho{D_\rho}
\newcommand\glo{\gL_0}
\newcommand\gli{\gL_1}
\newcommand\bglo{\overline{\glo}}
\newcommand\case[1]{\smallskip\noindent\emph{#1}}
\newcommand\AAA{A^-}
\newcommand\leb[1]{\lrabs{#1}}
\newcommand\vex{\mathbf}
\newcommand\vn{\vex n}
\newcommand\vp{\vex p}
\newcommand\vx{\vex x}
\newcommand\vy{\vex y}
\newcommand\gab{\zeta}
\newcommand\fx{f}
\newcommand\fSo{\fS^\circ}
\newcommand\ddi{\partial_i}
\newcommand\ddq{\partial}
\newcommand\ddx{\partial^*}
\newcommand\ddxi{\ddx_i}
\newcommand\nx{n_*}
\newcommand\px{p_*}
\theoremstyle{definition}
\newtheorem*{historical}{Historical note}
\theoremstyle{plain}
\newtheorem{thxmetod}{}
\newenvironment{metod}[1]%
{\begin{thxmetod}}{\end{thxmetod}}
\newcommand\vfree{\mathbf{\free}}
\newcommand\free{x}
\newcommand\ix{i^*}
\newcommand\vett{\mathbf1}
\newcommand\ax{a}
\newcommand\byx{b^*}
\newcommand\bx{b}
\newcommand\bz{b_0} 
\newcommand\ooz{_0^\infty}
\newcommand\vqq{v^*}
\newcommand\maxx{\cM}
\newcommand{\sumjN}{\sum_{j=1}^N}
\newcommand\mean{\chi}
\newcommand\WW{W^0}
\newcommand\subsex[1]{\medskip\noindent\textbf{#1.}}
\newcommand\subsexx[1]{\medskip\noindent\textit{#1.}}
\newcommand\pis{\pi}
\newcommand\PP{\Pi}
\newcommand\qn{\bar n}
\newcommand\fS{\mathfrak S}
\newcommand\psix{\bar\psi}
\newcommand\az{a_*}
\newcommand\cEao{\cE_{\le \az}}
\newcommand\iao{[0,\az]}
\begin{document}

\begin{abstract} 
We prove some basic results for a dynamical system given by a piecewise
linear and contractive map on the unit interval
that takes two possible values at a point of discontinuity. We prove that
there exists a universal limit cycle in the non-exceptional cases, 
and that 
the exceptional parameter set is very tiny in terms of gauge functions. The
exceptional two-dimensional parameter is shown to have Hausdorff-dimension
one. We also study the invariant sets and the limit sets; these are sometimes
different and there are several cases to consider. 
In addition, we give a
thorough investigation of the dynamics; studying the cases of rational and
irrational rotation numbers separately, and we show the existence of a
unique invariant measure.
We apply some of our results to a
combinatorial problem involving an election method suggested by Phragm\'en
and show that the proportion of elected seats for each party
converges to a limit, which is a rational
number except for a very small exceptional set of parameters.
This is in contrast to a related election method suggested
by Thiele, which we study at the end of this paper,  for which the limit
can be irrational also in typical cases and hence there is no typical
ultimate periodicity as in the case of Phragm\'en's method.
\end{abstract}

\maketitle
\tableofcontents

\section{Introduction}\label{S:intro}\noindent
The purpose of this paper is to study the dynamical system
$f_\pm:\oi\to\oi$ given by the multi-valued function
$x\mapsto\set{f_-(x),f_+(x)}$, where 
\begin{equation}\label{f}
  f_-(x)=\frax{ax+b},
\end{equation}
where 
$a$ and $b$ are given constants with $0<a<1$ and $0\le b<1$, 
$\frax\cdot$ denotes the usual fractional part taking values in $\oio$,
and where $f_+(x)$ 
takes the
value $1$ instead of $0$ for $x$ such that $ax+b$ is an integer, but
otherwise equals $f_-(x)$. We  write $f_+(x)=\{ax+b \}_+$.

The dynamical system given by $f_-:[0,1)\to [0,1)$ has been 
studied from time to time and looks deceptively simple; 
it is locally contractive, but it has (typically) a discontinuity which 
makes the behaviour non-trivial.
It has been studied in a variety of contexts, see, e.g., 
\cite{veer}, \cite{gamb}, \cite{bugeaudCR}, \cite{conze}, \cite{brem}, and
\cite{cout}; furthermore, it is a special case of more general locally
contractive dynamical systems in one or several dimensions studied in
\cite{bruin} and \cite{cat}. The recent works by Nogueira and
Pires \cite{nog}, Nogueira, Pires and Rosales \cite{nog2}, and, especially, 
that of Laurent and Nogueira \cite{laurent-nogueira}, are close to 
our investigation.

We study the dynamical system given by the multi-valued function $f_\pm$
instead of just $f_-$,
both in order to obtain complete (and symmetric)
results concerning the invariant set and the limit set, and 
because we need $f_\pm$
for our application to an election method in \refS{Sphr}. 
The study of the dynamics given by $f_\pm$ becomes somewhat
more complicated than for $f_-$, 
for example when studying the possible orbits,
but
we are rewarded by clear and useful results;
see for example the results in Sections \ref{Srational} and \ref{Sirrational}.

Earlier studies of $f_-$, show  that 
(ignoring a few complications that disappear when considering $f_\pm$)
the limit set may be either a periodic orbit or a
Cantor set, 
and that these cases correspond to rational and irrational rotation numbers.
These results are easily extended to $f_\pm$; much of the extension is
straight-forward, but we also add some details and special features for
$f_\pm$ that make the picture more complete.

In Sections \ref{Sdef} and \ref{Speriodic} we make a preliminary investigation 
of the invariant set $\gL_\pm:=\bigcap_{n=0}^\infty f_\pm^n([0,1])$ and the
limit set $\omega_{f_\pm}(x)$ of  
$f_\pm$ for $x\in [0,1]$.
(See \refS{Sdef} for the definition of the limit set in this context.)
We also show that if there exists a periodic orbit, then it is a universal
limit cycle in the sense that every orbit converges to it.
In particular, there is at most one periodic orbit.
We further
give examples when $\omega_{f_\pm}(x)\subsetneq \gL_\pm$ for all $x\in [0,1]$,
and show that even if $f_\pm$  has a universal limit cycle, the
invariant set may be different from it, in analogy with the higher dimensional
case, see \cite{cat}.

In \refS{Sorbits+} we study all possibilities for
orbits of $f_\pm$, with different cases depending on whether a periodic
orbit exists or not, and also on whether the periodic orbit (if it exists)
contains 
the  point of discontinuity
(the point of two values) of $f_\pm$ or not.

Next, building on 
the work by Bugeaud \cite{bugeaudCR}, Bugeaud and Conze \cite{conze}, 
and  \citet{cout},
we study in Sections \ref{Srotation} and \ref{Sloc}
the rotation number of $f_\pm$, 
with special attention to whether the rotation number is rational or irrational.
Furthermore, we show in \refS{Srotation} that every orbit has a well-defined
average, and that this is related to the rotation number. 
In \refS{Sloc} we identify the set of
parameters $(a,b)$
that gives rise to a certain rotation number.

As shown by 
\citet{bugeaudCR} and \citet{conze}, 
the rotation number of this dynamical system is
typically rational;
the exceptional set of parameters $(a,b)$ such that the rotation number is
irrational has Lebesgue measure 0, and \citet{laurent-nogueira} showed,
furthermore, that
the set of exceptional $b$ for a fixed $a$  has Hausdorff dimension 0.
We improve this result on Hausdorff dimension somewhat in \refS{Shaus},
in that we specify a gauge function,  
$h(t)=1/|\log t|^2$, for which 
the Hausdorff measure of the exceptional parameter set 
is finite.
We also give  a  lower
bound showing that this exceptional set is not arbitrarily tiny,
by showing that the Hausdorff measure is positive for the gauge function 
$h(t)=1/|\log t|$. We further prove that the exceptional set of 
parameter pairs  $(a,b)$ (a subset of $\oio^2$)
has Hausdorff dimension  1. 
We prove in \refS{Shaus} 
also that the Hausdorff dimension of the invariant set $\gL_\pm$ is zero
and that its Hausdorff measure is finite for the gauge function
$h(t)=1/|\log t|$. We leave it as an open  
question whether this gauge function is best possible in some sense.

In \refS{Srational} we prove that the dynamical system given by
$f_\pm$ has a rational rotation number if and only if it has a universal
limit cycle. In \refS{Sirrational}, we study the case of an irrational
rotation number and classify the limit
sets for $f_-$, $f_+$ and $f_\pm$; we prove in particular that
the limit set  $\omega_{f_\pm}(x)$ (then a Cantor set) is equal to the invariant
set $\gL_\pm$ for all $x\in [0,1]$.  

In \refS{Sinvariant}, we show that the dynamical system $F_\pm$ 
has a unique invariant measure with support in the invariant set.
Furthermore, the empirical measure of any orbit converges to this invariant
measure. 

The dynamical system we consider, or rather the one given by $f_-$, has been
studied in several
applications, of which we here only mention a couple of interesting ones: 
the work by Feely and Chua \cite{feely} in signal theory, 
which inspired \cite{bugeaudCR} and \cite{conze}, and 
the paper by Coutinho {\em et al}.\ \cite{cout2} 
studying genetic regulatory networks.

\subsex{Two election methods}
We also have an application in mind, and this was our original motivation
for the present work. We wanted to understand a curious  
behaviour 
recently found by  Mora and Oliver \cite{MoraO} 
of an election method that was suggested in 1894
by the Swedish mathematician Edvard Phragm\'en 
\cite{Phragmen1894}.

As a background, 
consider election methods where a given number $n\ge1$ of persons are to be
elected 
from some list of candidates without any formal parties, and
each voter votes for set of candidates (without ranking), 
where the set may be
chosen arbitrarily (except that possibly its size is restricted).
One such method is simple \emph{plurality}, where the $n$ persons with the
largest 
number of votes are elected. (In this case, usually each voter is
restricted to vote for at most $n$ candidates;
this system is also called \emph{block vote}. 
The version where a voter may
vote for any number of candidates is called \emph{appoval voting}.)
This method has been widely used, and it is still widely used in \eg{}
associations and societies without (formal or informal) parties. However,
for general elections with political parties, it will typically lead to the
largest party getting all seats; hence this method has for such purposes in
most places been replaced by other methods that tend to give
representation also to smaller parties, for example proportional methods 
based on parties with separate lists such as \emph{D'Hondt's method} 
\cite{DHondt1878,DHondt1882} 
or \emph{Sainte-Lagu\"e's method} \cite{StL}. 
(Many different
election methods are and have been used, or proposed; see \eg{}
\cite{BY}, \cite{electoralSystems} and \cite{Pukelsheim} for 
discussions of several important ones,
including also practical and political aspects.)

Another way to achieve some kind of proportionality is to keep the system
above, where each voter votes with a ballot containing an arbitrary set of
candidates, but elect the $n$ persons sequentially and reduce the voting
power of the ballots where some candidates already have been elected.
Two different such systems were proposed in 1894 and 1895 by the Swedish
mathematican 
Edvard Phragm{\'e}n (1863--1937) \cite{Phragmen1894,Phragmen1895}
and the Danish astronomer and
mathematician Thorvald Nicolai Thiele (1838--1910) \cite{Thiele},
respectively; see also
\cite{Phragmen1896,Phragmen1899}
and \cite{SJV9}.
Both methods can be seen as generalizations of D'Hondt's method to a
situation without formal parties \cite{Phragmen1895,SJV9}.
(To be precise, in a situation with organized parties, if every voter votes
for a party list, then both methods yield the same result as D'Hondt's
method.
This is easy to see
from the descriptions in Sections \ref{SSphr-algo} and \ref{SSthiele-algo}.)

We describe \phragmen's and Thiele's methods in Sections
\ref{Sphr} and \ref{Sthiele}; see \cite{SJV9} for further discussion.

\subsexx{A party version}
\citet{MoraO} recently considered an extension of \phragmen's me\-th\-od, where
the individual candidates are replaced by (disjoint) groups of candidates;
these groups are called \emph{candidatures} in \cite{MoraO}, but we shall call
them \emph{parties}. Mathematically, the difference is that a party may get
several members elected; the seats are allocated to the parties one
by one as in the original method, 
but we allow repetitions so a party may be selected several
times. We assume in this paper (unlike \cite{MoraO}) 
that the parties are sufficiently large (with potentially infinite lists of
candidates) so 
that they do not run out of persons to fill their seats.
We  also consider the same extension of Thiele's
method.

\begin{remark}\label{Rparty}
  We have presented the party version 
as an extension of the original method, but it can also be considered as
a special case. Consider the original method with individual candidates
and assume that there are parties consisting of disjoint sets of candidates
that are regarded 
as equivalent by all voters (and by us), so that each voter votes for either
all candidates from a party or for none of them, for each party. In other
words, each voter votes for the union of some set of parties. It is then
easy to see,   for both \phragmen's and Thiele's methods,
that the result is the same for the party version and for the original version
(with the party representatives chosen \eg{} by lot, since all from the same
party will tie each time).
\end{remark}

We consider an election using the party version of either \phragmen's or
Thiele's method,
with some  set of parties and some set of votes (where each vote thus is for one
or several parties).
We let $n\ge1$ seats be distributed in the election, 
and let $n_i$ be the number of
seats given to a party $i$ and $p_{in}:=n_i/n$ the corresponding proportion of
seats. 
Our main interest is in the asymptotics of these proportions as the number
$n$ of elected seats tends to infinity, for a fixed set of votes. 
(This makes sense for the party version, but not for the original version.)
In the
case when each voter 
votes for exactly one party, both methods reduce to D'Hondt's method, as said
above, 
and it is 
well-known and 
easy to see
that the proportion $p_{in}$
of elected seats for a party then converges to the proportion of
votes for that party. 
(For more precise results, see \cite{SJ262}.)

\citet[Section 7.7]{MoraO} studied in particular
the party version of \phragmen's method
in the case with only two
parties, $A$ and $B$, and found numerically 
that the proportions $n_A/n$ and $n_B/n=1-n_A/n$ of elected seats for each party
do converge;
however, the limit has
an unexpected singular  \lq Devil's staircase' structure as a function of the
proportions of votes 
for different ballots: it seemed that the limit is always a rational number and
that each rational number in $(0,1)$ is the limit for some range of the vote
proportions.  
We show that this is indeed the case, with the modification that irrational
limits exist but only for a null set of the parameters,
by interpreting the party version of Phragm\'en's method as a dynamical
system, which in the case of two parties can be 
transformed to a dynamical system of the type considered in the present
paper.
This leads to the following theorem, which is one of our main results.
The proof is given in \refS{Sphr}.
Recall that in the present context each vote is either for party $A$, party
$B$ or the set $\set{A,B}$, which we denote by $AB$. 

\begin{theorem}\label{Tphr}
Consider the party version of \phragmen's election method, 
with two parties A and B, 
and let the proportions of votes on $A$, $B$ and $AB$
be
$\ga,\gb$ and $\gab=1-\ga-\gb$, 
respectively, and assume that $\ga+\gb>0$. 
Let $n_A$ and $n_B$ be the numbers of seats given to the two
parties when $n$ seats have been distributed; then
the fractions $n_A/n$ and $n_B/n$ of seats given to the two parties converge to some
limits $p_A$ and $p_B=1-p_A$, respectively, as \ntoo.
Furthermore, the following holds.
\begin{romenumerate}

\item \label{TphrO}
$n_A=p_An+O(1)$ and $n_B=p_Bn+O(1)$.

\item \label{TphrpB}
If $\ga\ge\gb>0$, 
then
\begin{equation}\label{pB}
p_B=\frac{1}{2+\bz+\rho},    
\end{equation}
where $\rho$ is the rotation number of the dynamical system
\begin{equation}\label{faxbx}
  f_\pm(x)=\set{\frax{\ax x+\bx},\fraxx{\ax x+\bx}}
\end{equation}
and we define
\begin{align}\label{tax}
\ax&:
=\frac{\ga\gb}{(\ga+\gab)(\gb+\gab)}
=\frac{\ga\gb}{(1-\ga)(1-\gb)}
\in (0,1],
\\ \label{tbyx}
  \byx &:
=
\frac{\ga-\gb}\gb+\frac{\ga(1-\ga-\gb)}{(1-\ga)(1-\gb)},
\\\label{tbx}
\bx&:=\frax{\byx},
\\\label{tbz}
\bz&:=\floor{\byx}.
\end{align}
We have $a<1\iff\zeta>0$.

\item \label{TphrQ}
If the rotation number $\rho$ is rational, and furthermore $\gab>0$,
then the sequence of awarded seats is eventually periodic.
\end{romenumerate}
\end{theorem}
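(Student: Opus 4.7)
The plan is to translate the sequential allocation of seats under \phragmen's method into an orbit of the dynamical system $f_\pm$ and then invoke the results established in the preceding sections.

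First I would write out the algorithm explicitly in the two-party setting with the three ballot types (pure $A$, pure $B$, mixed $AB$). After normalizing by total load and suitably rescaling, the state can be encoded by a single real parameter $x \in [0,1]$, essentially the relative position of the mixed ballots' load between those of the two pure types. I would then verify by direct algebraic computation that after a seat is awarded the parameter evolves as $x \mapsto \frax{a x + b}$, or as its $+$-companion when a tie occurs, with $a,b$ given by \eqref{tax}--\eqref{tbx}; the tie-breaking freedom in \phragmen's rule is exactly captured by the two-valued behaviour of $f_\pm$ at the discontinuity. In particular, the entire election sequence is encoded by an orbit of $f_\pm$ starting from an explicit initial point determined by the initial load distribution.

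Next, I would match the counting of seats to the rotation number. Each time party $B$ receives a seat corresponds to the relevant iterate of $ax+b$ crossing an integer boundary, and each iteration of $f_\pm$ contributes exactly one seat to some party; thus the number of $B$-seats among the first $n$ awards equals, up to an initial $\floor{\byx}$-contribution and an $O(1)$ boundary effect, the integer-part sum whose asymptotic behaviour is governed by $\rho$. The result of \refS{Srotation} that every orbit has a well-defined average equal to $\rho$, with uniform $O(1)$ error, then yields \ref{TphrO} and, after expressing $p_B$ as a fraction of total seats rather than iterations of $f_\pm$, the explicit formula \eqref{pB}; the summand $2+\bz$ in the denominator reflects precisely the seat allocations that are not counted by $\rho$ alone. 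The equivalence $a<1\iff\gab>0$ is a direct check from \eqref{tax}. For \ref{TphrQ}, when $\gab>0$ we have $a<1$ strictly, so $f_\pm$ is genuinely contractive on each piece; a rational rotation number then supplies, by \refS{Srational}, a universal limit cycle to which every orbit converges in finitely many steps, giving ultimate periodicity of the sequence of awarded seats.

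I expect the principal obstacle to lie in the reduction itself: verifying that the load dynamics under \phragmen's method, after the correct change of variables, produces exactly the affine map with constants \eqref{tax}--\eqref{tbyx}, and that the accounting between seats awarded and iterations of $f_\pm$ is bookkept correctly so that the awkward-looking expression \eqref{tbyx} for $\byx$ drops out. A secondary delicate point is checking that the normalization $\ga\ge\gb$ lets us treat $B$ uniformly as the less-favoured party across the entire recursion, and that the discontinuity of $f_\pm$ is matched to exactly those steps where \phragmen's method has a genuine choice between $A$ and $B$.
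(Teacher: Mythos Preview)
Your overall strategy is right, but the description of the reduction contains a concrete error that would block \eqref{pB} from emerging. You assert that ``each iteration of $f_\pm$ contributes exactly one seat to some party'' and that $B$-seats correspond to integer crossings; taken literally this yields $p_B=\rho$, not $1/(2+\bz+\rho)$, and indeed your later phrase ``after expressing $p_B$ as a fraction of total seats rather than iterations of $f_\pm$'' contradicts your own bijection. In the paper's reduction the correspondence is not one-to-one. One first observes (using $\ga\ge\gb$) that whenever $B$ is elected the very next seat necessarily goes to $A$; after this and a further collapse of consecutive $A$-awards, a single iteration of $f_\pm$ with the constants \eqref{tax}--\eqref{tbx} encodes the output block $BA^{1+\bz+\eps_i}$, i.e.\ $2+\bz+\eps_i$ seats, where $(\eps_i)$ is the symbolic sequence \eqref{eps}. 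Hence after $m$ iterations $n_B=m$ and $n_A=(1+\bz)m+\sum_{i<m}\eps_i+O(1)$; \refT{Teps} gives $n=(2+\bz+\rho)m+O(1)$, which is exactly \eqref{pB} and part \ref{TphrO}. The awkward form of $\byx$ arises precisely from this multi-step collapse, not from a single-seat change of variables.

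A second, smaller gap concerns \ref{TphrQ}: orbits of $f_\pm$ do not in general reach the limit cycle in finitely many steps, only converge to it geometrically. What you need is \refT{TRbc}: when $\rho\in\bbQ$ and $a<1$ the \emph{symbolic sequence} $(\eps_i)$ is eventually periodic, and then the substitution $\eps_i\mapsto BA^{1+\bz+\eps_i}$ transfers this to eventual periodicity of the seat sequence.
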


Furthermore,  \eqref{pB} can be combined with
Theorem \ref{TLb} or Theorems \ref{THb}--\ref{THab},
which all imply that the rotation number, and thus
$p_B$, is rational for almost all values of the parameters $\ga,\gb$, and
that each rational number in $(0,1)$ is attained for some set of $(\ga,\gb)$
with a non-empty interior, verifying the observed Devil's staircase behaviour.
The reader can compare \cite[Figure 1]{conze} and \cite[Figura 2]{MoraO},
which show this phenomenon from two different points of view, connected by
our \refT{Tphr}.
\begin{remark}
In particular,  as shown by \citet{laurent-nogueira}, see \refT{Talgebraic}
below,
the rotation number is rational whenever $a$ and $b$ are rational (or even
algebraic) numbers;
hence \refT{Tphr} shows that $p_B$ is rational whenever $\ga$ and $\gb$ are
rational (or algebraic),
which
explains why only rational limits were observed in \cite{MoraO}.
See further \refT{Tphr-alg}.
\end{remark}

\begin{problem}
  Consider the party version of \phragmen's method in a case with $N\ge3$
  parties, and given numbers of votes. Will the proportions of seats $n_i/n$
given to the different parties   converge as \ntoo?
What are the limits?
\end{problem}

In \refS{Sthiele}, we consider instead the party version of Thiele's method 
(with an arbitrary number of parties), and obtain very different results.
We show that, under weak hypoteses, the proportions of seats for each party
converge as
\ntoo{} for
Thiele's method too, 
but now each limit is a smooth function of the vote proportions;
moreover, 
the limits can be irrational numbers also in simple cases with integer
numbers of votes. 
 We do not know whether there is a quasi-periodic behaviour in this case.
In any case, we find this difference between the two election methods
interesting.

\begin{remark}
  \phragmen's and Thiele's methods were devised for a situation without
a completely developed party system, and for small constituencies.
Here, in contrast, we study the methods in the opposite situation with
well-organized parties and a very large number of seats.
The results are therefore not directly relevant for the original situation,
and our investigation is mainly for mathematical curiosity;
nevertheless, the results might give insight into some aspects of the
methods.

For small numbers of seats, Thiele's method sometimes yields undesirable 
results, while \phragmen's method seems more robust, as discussed with many
examples in the 1913
report of the Swedish Royal
Commission on the Proportional Election Method \cite{bet1913},
see also \cite{SJV9}.
For very large numbers of seats, our result indicate the opposite, with
a smoother behaviour of Theiele's method.
\end{remark}

\begin{historical}
Thiele's  method was  used in Swedish parliamentary
elections 1909--1920 
for the distribution of seats within parties 
(in combination with a special rule); it was in 1921 replaced by an ordered
version of \phragmen's method. This version of
\phragmen's method is still formally used but
nowadays in combination with a system of personal votes and in reality the
method has a very minor role.
See further \cite[Appendix D]{SJV9}.
\end{historical}

\subsex{Acknowledgements}
First of all, we would like to thank Mark Pollicott for helping us with this project. 
We are also grateful to Arnaldo Nogueira and Jean-Pierre Conze for valuable guidance, and to Anders Johansson
for many valuable discussions. The first author was supported in part by the Knut and Alice Wallenberg 
Foundation.

\section{Notation and some basic properties}\label{Sdef}\noindent
We assume throughout that $a$ and $b$ are given constants with $0<a<1$ and
$0\le b<1$.
(See \refR{Rac} for other parameter values.) 

We let, as usual, $\floor{x}$ and $\frax{x}$ denote the integer and
fractional parts of a real number $x$; thus $\floor{x}\in\bbZ$ and
$\frax{x}:=x-\floor{x}\in\oio$. 
Furthermore, $\ceil{x}:=-\floor{-x}$ is the smallest integer $\ge x$.
We further define $\fraxx{x}$ as the left-continuous version of $\frax{x}$; 
thus,
when $x\in\bbR\setminus\bbZ$, then
$\fraxx{x}=\frax{x}\in(0,1)$, but if $x\in\bbZ$, then $\frax{x}=0$ and
$\fraxx{x}=1$. 
(Equivalently, $\fraxx{x}:=1-\frax{-x}$.)

For a function $f$ defined on (a subset of) $\bbR$, let
$f(x-):=\lim_{y\upto x} f(y)$ and $f(x+):=\lim_{y\downto x} f(y)$, when
the limits exist.

The Lebesgue measure of a set $E\subseteq\bbR$ is denoted $\leb{E}$.

\subsection{The basic functions} 
Let us first dismiss a trivial case.

\begin{example}\label{E<1}
  Suppose that $a+b<1$. Then \eqref{f} is $f_-(x)=ax+b$ for all $x\in\oi$.
  This is a linear  contraction, and trivially $f_-^n(x)\to p_0$ as
\ntoo{} for every $x$, where $p_0:=b/(1-a)\in\oio$ is the (unique)
fixed point  of $f_-$.

If $b>0$, then $f_+=f_-$, and thus 
$f_\pm(x)^n\to p_0$ as \ntoo, for every $x$.
We return to the case $b=0$ in \refE{E0} below.
\end{example}

In the sequel we thus focus on the case $a+b\ge1$.

Let $\tau\in\oi$ be the point of discontinuity of $\frax{ax+b}$ in \oi, if any.
Thus, if $a+b\ge1$ (our main case), then 
$\tau=(1-b)/a$ is the solution of $ax+b=1$; note that in this case $\tau\in\ooi$.
In the exceptional case $b=0$, we have $\tau=0$, 
and in the trivial case $a+b<1$ with $b>0$ (see \refE{E<1}),
$\tau$ does not exist.

As said in the introduction, we allow an ambiguity at the discontinuity
point $\tau$, and we
thus define two versions of \eqref{f}, both for $x\in\oi$:
\begin{align}
  f_-(x)&:=\frax{ax+b}=
ax+b-\floor{ax+b},
\label{f1-}\\ 
\label{f1+}
  f_+(x)&:=\fraxx{ax+b}=
ax+b-(\ceil{ax+b}-1).
\end{align}
Thus, explicitly, in the case $a+b\ge1$, when $\tau>0$,
\begin{align}
  f_-(x)&=
  \begin{cases}
    ax+b, & 0\le x<\tau;\\
    ax+b-1, & \tau\le x\le1;
  \end{cases}
\label{f2-}\\ \label{f2+}
  f_+(x)&=
  \begin{cases}
    ax+b, & 0\le x\le\tau;\\
    ax+b-1, & \tau< x\le1.
  \end{cases}
\end{align}
If $\tau=b=0$, then \eqref{f2-}--\eqref{f2+} are modified by replacing $b$ by 1.
In the trivial case when $\tau$ does not exist, $f_-(x)=f_+(x)=ax+b$ for all
$x\in\oi$.

Note that $f_-(x)=f_+(x)$  except at the discontinuity $x=\tau$,
where $f_-(\tau)=0$ and $f_+(\tau)=1$. 
Note also that $f_-$ is right-continuous on $\oi$ 
and $f_+$ is left-continuous. 
Furthermore, $f_-:\oi\to\oio$ and $f_+:\oi\to\ooi$.

Finally, let $f_\pm(x)$ denote the multi-valued function
$x\mapsto\set{f_-(x),f_+(x)}$. 
Formally, this is a set-valued function, 
but we usually regard it as a function $\oi\to\oi$ that is indeterminate at
$\tau$, where we can choose freely between $f(\tau)=0$ and $f(\tau)=1$;
for $x\in\oi\setminus\set\tau$, 
$f_\pm(x)$ is a unique single value in $\oi$.

Note that $f_\pm$ is injective but not surjective, and that it has a
continuous single-valued inverse $f_\pm\qw:[0,a+b-1]\cup[b,1]\to\oi$ (when
$a+b>1$). 

\begin{remark}
  \label{Rac}
We thus assume  $0<a<1$ and $0\le b<1$.
The assumption $0\le b<1$ is without loss of generality, since only the
fractional part of $b$ matters. However, it is also possible to consider other
values of $a$. The main reason for our assumption $0<a<1$ is that we want the
dynamical system to be locally contractive, which rules out $|a|\ge1$.

The case $-1<a<0$ is locally contractive but decreasing instead of
increasing; this seems to be another interesting case, and we expect results
similar to the ones in the present paper, but this case will not be studied
here.  

Note also that the limiting cases $a=0$ and $a=1$ are trivial: when $a=0$,
$f$ is constant, and when $a=1$, $f_-(x)=\set{x+b}$ is just a translation
(rotation) on the circle group $\bbR/\bbZ$.
\end{remark}

\begin{remark}\label{Rreflection}
  The reflection $\gs(x):=1-x$ maps the dynamical system to another one of
  the same kind. More precisely,  indicating the parameters $a,b$ by
  subscripts, if we reflect the left-continuous $f_{a,b;+}$ we 
obtain the right-continuous
\begin{equation}
  \begin{split}
\gs\circ f_{a,b,+}\circ\gs(x)
&=
  1-f_{a,b;+}(1-x)=1-\fraxx{a-ax+b}
\\&
=\frax{-(a-ax+b)}
=\frax{ax-(a+b)}
=f_{a,\tilde b;-}(x),    
  \end{split}
\raisetag\baselineskip
\end{equation}
where
\begin{equation}\label{bref}
  \tilde b:= \frax{-(a+b)}.
\end{equation}
Similarly, the reflection of $f_{a,b,-}$ is $f_{a,\tilde b;+}$, and consequently
the reflection of $f_{a,b,\pm}$ is $f_{a,\tilde b;\pm}$.

If $a+b>1$ (the most interesting case),
\eqref{bref} yields $\tilde b=2-a-b$.
\end{remark}

\subsection{Orbits and periodic points}\label{SSdeforbit}

For the  single-valued function $f_-$, the \emph{orbit} of a
point $x\in\oi$ is, as usual, the sequence $(f_-^n(x))_{n=0}^\infty$, 
and similarly for $f_+$.
For the multi-valued $f_\pm$, we say that an orbit of $x\in\oi$ is any
sequence
$(x_n)_{0}^\infty$ such that $x_0=x$ and $x_{n+1}\in f_\pm(x_n)$, $n\ge0$.
In other words, an orbit is any possible sequence obtained by repeatedly
applying $f_\pm$, making arbitrary choices each time there is a choice (\ie,
when the orbit visits $\tau$).

A \emph{periodic orbit} 
is an orbit $(x_n)_0^\infty$ with $x_{n+q}=x_n$ for some $q\ge 1$ 
(the \emph{period}) and all $n\ge0$; in this case we also write the orbit as
$\set{x_0,\dots,x_{q-1}}$. If furthermore $x_0,\dots,x_{q-1}$ are distinct,
we say that this is a \emph{minimal periodic orbit}.
Note that, also for a multi-valued function such as $f_\pm$, a non-minimal
periodic orbit 
always can be seen as a combination of several minimal periodic orbits
(identical or not, and possibly with different initial points and inserted into
each other).

A periodic orbit with period 1 is the same as a fixed point.

A \emph{periodic point} is a point $x$ that has a periodic orbit.

We consider a few simple examples with a periodic orbit (for example a fixed
point), but where the multi-valuedness of $f_\pm$ causes complications
because $\tau$ is in the periodic orbit.
The general case is studied in
\refS{Speriodic}. 

\begin{example}
  \label{E1}
Suppose that $a+b=1$. Then $\tau=1$, and 1 is both a fixed point of $ax+b$
and a discontinuity point, since $f_\pm(1)=\setoi$. 
If $0\le x<1$, then $x$ has a unique orbit
$(x_n)\ooz=(f_\pm^n(x))\ooz=(f_-^n(x))\ooz=(f_+^n(x))\ooz$ 
with, by induction, $x_n=1-a^n(1-x)$;
the orbit converges to the fixed point 1, but it never reaches 1 and thus
there is never any choice. 

However, if we start with $x=1$, then there is one periodic orbit $1$ with
period 1, but there are also infinitely many other orbits, starting with 1
repeated an arbitrary number of times followed by a jump to 0; from that
point the orbit follows the unique orbit starting at 0 and thus converges to
1 as said above.

Consequently, in this example, all possible orbits converge to the fixed
point 1. However, note that they do not converge uniformly, since an orbit
starting at 1 may reach 0 at any given later time.
\end{example}

\begin{example}
  \label{E0}
Suppose that $b=0$. This is a special case of \refE{E<1}, and $f_-(x)=ax$
which is a contraction with fixed point 0, so all orbits of $f_-$ converge to 0.

However, in this case (unlike the case $a+b<1$ with $b>0$), \refE{E<1} does
not give the full story for $f_\pm$, since $f_+(0)=1$. Hence, the fixed
point 0 is also 
the discontinuity point $\tau$, and 0 has infinitely many orbits, the
periodic orbit 
$0$ and orbits starting 0 repeated an arbitrary number of times followed by
1 and then converging back to 0, without ever reaching it.

The situation is as in \refE{E1}, with 0 and 1 interchanged; in fact, the two
examples are the mirror images of each other by the reflection discussed in
\refR{Rreflection}. 
\end{example}

\begin{example}\label{E2/3}
Consider $a=1/2$ and $b=2/3$, \ie, $f_-(x)=\frax{\frac12x+\frac{2}3}$.
Then $\tau=2/3$. Furthermore, $f_\pm(0)=2/3$,
and thus $\set{0,\frac{2}3}$ is a periodic orbit with period 2. 
But 0 and $2/3$ also have an infinite number of orbits that include
$f_+(2/3)=1$, for example $\frac23,1,\frac{1}6, \dots$. Each such orbit
continues from 1 along the unique orbit of 1, which is
$1,\frac{1}6,\frac{3}4,\frac{1}{24},\frac{11}{16},\dots$,
where $x_{2n}=(2+2^{-2n})/3$
and $x_{2n+1}=2^{-2n-1}/3$; hence each such orbit converges to the periodic
orbit $\set{0,\frac{2}3}$.
\end{example}

\subsection{The invariant set}

If $K\subseteq\oi$, then
\begin{equation}
  f_\pm(K)=f_+(K\cap[0,\tau])\cup f_-(K\cap[\tau,1]).
\end{equation}
Since $f_+$ is continuous on $[0,\tau]$ and $f_-$ on $[\tau,1]$, it follows
that if $K\subseteq\oi$ is compact, then $f_\pm(K)$ is compact.

Consequently (by induction), $f_\pm^n(\oi)$, $n\ge0$, is a decreasing
sequence of non-empty compact subsets of $\oi$, and thus
\begin{equation}\label{gLpm}
  \gL_\pm:=\bigcap_{n=0}^\infty f_\pm^n(\oi)
\end{equation}
is a non-empty compact set.

Note that $f_\pm(\gL_\pm)=\gL_\pm$ and (since $f_\pm\qw$ is single-valued)
$f_\pm\qw(\gL_\pm)=\gL_\pm$.
In particular, since $f_\pm(\tau)=\setoi$,
\begin{equation}\label{01tau}
  0\in\gL_\pm \iff \tau\in\gL_\pm \iff 1\in\gL_\pm.
\end{equation}
Moreover, if $0,\tau,1\notin\gL_\pm$, then $f_\pm$ is single-valued on
$\gL_\pm$, and thus $f_\pm:\gL_\pm\to\gL_\pm$ then is a homeomorphism. 
(We shall see in Sections \ref{Srational} and \ref{Sirrational} that this
happens only when $\gL_\pm$ is finite, 
\cf{} the general \cite[Theorem 3.1]{cat}.)

We can also define the corresponding sets for $f_-$ and $f_+$:
\begin{equation}
  \gL_-:=\bigcap_{n=0}^\infty f_-^n(\oi),
\qquad
  \gL_+:=\bigcap_{n=0}^\infty f_+^n(\oi).
\end{equation}
However, these may be empty, as seen by the following example (and its
mirror image \refE{E0});
furthermore, $\gL_-$ and $\gL_+$ are not always closed sets, see \refT{Tir}.
Hence $f_\pm$ and \eqref{gLpm} yield a more satisfactory definition. 
We describe the sets $\gL_\pm,\gL_-,\gL_+$ completely in Theorems \ref{TRb} and
\ref{Tir}.

\begin{example}\label{EbigA}
  Consider again \refE{E1} with $a+b=1$. Clearly the fixed point
  $1\in\gL_\pm$, and 
thus every orbit of 1 is contained in $\gL_{\pm}$; furthermore, by applying
$f_\pm\qw$ repeatedly, it is easily seen that no further points belong to
$\gL_\pm$. Thus
$\gL_\pm=\set{1-a^n:n\ge0}\cup\set1$.
It is also easily seen that $\gL_-=\emptyset$ and $\gL_+=\set1$.
\end{example}

\begin{remark}\label{RbigA}
  The invariant set is sometimes called the \emph{attractor}, see \cite{cat}
  (where our definition corresponds not to 
Definition 2.2 but to the version 
given  immediately afterwards;
  these are not always equivalent). 
However, in the present context, this name
  seems less appropriate. 
For example, in \refE{EbigA},  every 
orbit is attracted to 1, see \refE{E1}.
\end{remark}

\subsection{The limit set}
As in the higher-dimensional case (see \cite{cat}) the invariant set  $\gL_\pm$ 
for our multivalued $f_\pm$ can be quite large,
and too large for some purposes, see \refE{EbigA} and \refR{RbigA}.
It is convenient to introduce the notion of a 
{\em limit set} for $f_\pm$. 
For single-valued functions, we define the 
$\omega$-limit set as in, e.g.,
\cite{nog} and \cite{cat}: 
for a single-valued function $f$, we say that a point $p$ is an 
$\omega$-limit point of $x$ if there is a strictly increasing
sequence of positive integers $\{n_\ell\}$ such that
$\lim_{\ell \to \infty} f^{n_\ell}(x)=p$. The collection of all such limit points is 
the $\omega$-limit set of $x$, denoted by $\omega_f (x)$. 
Equivalently,
\begin{equation}\label{omegaf}
\omega_f (x)=\bigcap_{m\geq 0} \overline{\bigcup_{k\geq m}\{f^k(x) \}}.  
\end{equation}
We adjust this definition for the multi-valued function
$f_\pm$ with the convention that we follow a
specific orbit. 
More precisely, for   $f_\pm$, we say that $p$ is an 
$\omega$-limit point of $x$ if 
there exists an orbit $(x_n)_0^\infty$ of $x$ and
a subsequence $\{n_\ell\}_{\ell=1}^\infty$ of
positive integers such
that $x_{n_\ell}\to p$ as $\ell \to \infty$. 

\begin{remark}\label{Rdefoi}
  The function $f_-$ maps into $\oio$, so it may be regarded as a dynamical
  system either $f_i:\oio\to\oio$ or $f_i:\oi\to\oi$. 
(The difference is of course trivial, and usually does not matter.)
For definiteness, we interpret \eqref{omegaf}
in $\oi$, so $\go_{f_-}(x)$ is a closed subset of $\oi$, defined for all
$x\in\oi$. The same applies to $f_+$.
\end{remark}

For a specific periodic orbit $C=\set{y_0,\dots,y_{k-1}}$, we say that 
an orbit $(x_n)_{n=0}^\infty$  converges to $C$ 
if there exists $j$ such that
$x_n-y_{j+n \bmod k}\to0$ as \ntoo. 
We further say that $C$ is 
a \emph{limit cycle} of $x$
if every orbit starting at $x$ converges to $C$;
in this case we also say that \emph{$x$ is attracted to} $C$.
If $C$ is a limit cycle of $x$, then $\omega_{f_\pm}(x)=C$.
Conversely, 
using \refL{LA01} below, 
it is easy to see that if $C$ is a periodic orbit of $f_\pm$,
and $\omega_{f_\pm}(x)=C$, then $C$ is a limit cycle of $x$.

We say that $C$ is a \emph{universal limit cycle} if it is a limit cycle for 
every $x\in\oi$, 
or, equivalently, that $\omega_{f_\pm}(x)=C$ for every $x$.
In other words, every orbit with any initial point is
attracted to $C$. 

A related notion is that
$f_\pm$ is {\em asymptotically periodic}
if $\omega_{f_\pm}(x)$ is a periodic orbit of $f_\pm$ for every $x\in [0,1]$.
As shown in \refS{Speriodic} below, $f_\pm$ has at most one periodic orbit,
and thus $f_\pm$ is asymptotically periodic if and only if $f_\pm$ has a
universal limit cycle.
(Cf.\ \cite{bruin} and  \cite{nog}, where this notion is studied in
situations where several periodic orbits may occur.)

It is easy to see that
$\omega_{f_\pm}(x)\subseteq \gL_\pm$. 
We note that in \refE{E1} we have $\go_{f_\pm}(x)=\set1$ for every $x$, and
thus, see Example \ref{EbigA}, 
$\omega_{f_\pm}(x)\subsetneq \gL_\pm$ for every $x$. This is 
also the case in the following example, which illustrates 
one possible 
situation when there is a periodic orbit, 
see \refS{Sorbits+}.
See also Remarks \ref{RRb} and \ref{Rir} where 
the relation between the limit sets and the invariant sets
is studied further.

\begin{example}
Consider again \refE{E2/3} with $a=1/2$ and $b=2/3$.
Then the \gol{} set $\go_{f_\pm}(x)=\set{0,\frac{2}3}$ for every $x\in\oi$,
and thus the periodic orbit $\set{0,\frac{2}3}$ is a 
universal limit cycle with period 2. 
But  
$\tau=2/3$ is mapped to $0$ or $1$ and this makes it impossible to
get a uniform bound on the rate of convergence to the limit cycle.
This phenomenon will occur for any $f_\pm$ as soon as $\tau \in \gL_\pm$
and is in contrast to the uniform rates for $f_-$ and $f_+$ 
(see \cite[Theorem 2.2(2)]{brem}).
\end{example}

\begin{remark}
Another related notion, 
is the {\em non-wandering set} of $f_\pm$, 
as defined in e.g.\ \cite{cat}. 
In our case, it can be shown, \eg{} using Theorems \ref{TRb} and \ref{Tir},
 that the non-wandering set
is equal to the \gol{} set $\omega_{f_\pm}(x)$
for all $x\in [0,1]$.
We shall therefore not consider the non-wandering set further.
\end{remark}

\subsection{The lifts}

We define lifts 
$F_-,F_+: \mathbb R \to \mathbb R$ of $f_-$ and $f_+$
by
\begin{align}
F_-(x) &:= a\frax{x}+b  + \floor{x} \phantom:= ax + b + (1-a)\floor{x}, 
\label{F-}
\\
F_+(x) &:= F_-(x-)
=ax+b  - (1-a)\floor{1-x}. 
\label{F+}
\end{align}

Note that $F_-(x)=F_+(x)$ unless $x$ is an integer.

We collect some standard properties that follow immediately from the definition.

\begin{lemma}[{Cf.~\cite[p.\ 15]{cout}}] \label{LF}
Let $F_-,F_+: \mathbb R \to \mathbb R$ be the lifts defined in
\eqref{F-}--\eqref{F+}. 
Then
\begin{romenumerate}
\item  
$F_-(x + 1) = F_-(x) + 1$,  $F_+(x + 1) = F_+(x) + 1$.
\item\label{LFpi}  
$\pis_-\circ F_- = f_- \circ \pis_-$,
where $\pis_-: \mathbb R \to [0,1)$ is given by $\pis_-(x) = \frax{x}$;  
$\pis_+\circ F_+ = f_+ \circ \pis_+$,
where $\pis_+: \mathbb R \to \ooi$ is given by $\pis_+(x) = \fraxx{x}$. 
\item\label{LF<}  
$F_-$ and $F_+$ are strictly increasing.
\item $F_-$ and $F_+$ are continuous except at the integers;
$F_-$ is right-continuous and $F_+$ is left-continuous.
\end{romenumerate}
\end{lemma}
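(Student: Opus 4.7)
The plan is to verify all four parts by direct computation from the two equivalent expressions in \eqref{F-}--\eqref{F+}, treating the behavior at integers (the only points where the multi-valued issue arises) carefully. Since $F_+$ is defined as $F_-(x-)$, most properties for $F_+$ can be deduced from those of $F_-$ by taking left limits.

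For (i), the identity $\floor{x+1}=\floor{x}+1$ plugged into \eqref{F-} gives $F_-(x+1)=F_-(x)+1$ at once, and for $F_+$ I would use $F_+(x+1)=F_-((x+1)-)=\lim_{y\upto x}\bigpar{F_-(y)+1}=F_+(x)+1$. For (ii), the relation $\pis_-\circ F_-=f_-\circ\pis_-$ is immediate from the expression $F_-(x)=a\frax x+b+\floor x$, since $\floor x\in\bbZ$ gives $\frax{F_-(x)}=\frax{a\frax x+b}=f_-(\frax x)$. The analogous identity for $F_+$ splits into cases: when $x$ is not an integer, $F_+(x)=F_-(x)$ and $\pis_+(x)=\pis_-(x)$, so if additionally $F_-(x)\notin\bbZ$ (equivalently $\frax x\ne\tau$) it reduces to the previous case; the remaining possibilities (namely $x\in\bbZ$, or $\frax x=\tau$) are handled by a short direct computation using $F_+(n)=a+b+n-1$ and $\fraxx{x-1}=\fraxx x$.

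For (iii), strict monotonicity of $F_-$ follows from
\begin{equation*}
F_-(y)-F_-(x)=a(y-x)+(1-a)\bigpar{\floor y-\floor x}\ge a(y-x)>0
\end{equation*}
for $y>x$, since $a>0$, $1-a>0$, and $\floor\cdot$ is non-decreasing. Strict monotonicity of $F_+$ then follows by inserting any $z\in(x,y)$: one has $F_+(x)\le F_-(x)<F_-(z)\le F_+(y)$. For (iv), $F_-$ is the sum of the continuous function $ax+b$ and the right-continuous step function $(1-a)\floor x$, so $F_-$ is continuous off $\bbZ$ and right-continuous everywhere; at each integer $n$, the jump $F_-(n)-F_-(n-)=1-a>0$. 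That $F_+$ is left-continuous and continuous off $\bbZ$ is then immediate from $F_+=F_-(\cdot-)$.

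There is no real obstacle here; the statement is a collection of standard properties of the lift, and the only mild care needed is in (ii) at the integer points and at $\frax x=\tau$, where one must verify that the single-valued identity $\pis_+\circ F_+=f_+\circ\pis_+$ holds with the correct endpoint convention ($\fraxx\cdot$ taking the value $1$ at integers). $\noqed$
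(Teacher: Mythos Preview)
Your proposal is correct and is precisely the kind of direct verification from the definitions that the paper has in mind; the paper's own proof consists of the single word ``Obvious.'' Your expanded treatment of the endpoint cases in (ii) and the monotonicity argument for $F_+$ in (iii) are accurate and more explicit than what the paper provides.
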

\begin{proof}
  Obvious.
\end{proof}

\subsection{The rotation number} 
\label{SSrotation}
It is well-known 
that the dynamical system $f_-$ has a well-defined 
\emph{rotation number},
see \eg{} 
\cite{bugeaudCR},
\cite{conze}, \cite{cout}, 
This is easily extended to $f_\pm$ 
in the following sense.
We give a proof in \refS{Srotation}.
\begin{lemma}\label{Lrho}  
There exists a number $\rho=\rho(f_\pm)\in\oio$,
called the rotation number of $f_\pm$,
such that, for any $x\in\bbR$, as \ntoo,
\begin{equation}\label{lrho}
  F_-^n(x)/n\to \rho, 
\qquad
  F_+^n(x)/n\to \rho.
\end{equation}
In fact,
\begin{equation}\label{lrhoO}
  F_-^n(x)= x+\rho n + O(1), 
\qquad
  F_+^n(x)= x+\rho n + O(1),
\end{equation}
uniformly in $x\in\bbR$ and $n\ge0$.
We have  
\begin{equation}\label{rho-ab}
  a+b-1\le\rho\le b.
\end{equation}
Furthermore, $\rho=0\iff a+b\le1$. 
\end{lemma}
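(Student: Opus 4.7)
The argument follows the classical Poincar\'e construction of rotation numbers for monotone circle lifts, applied separately to $F_-$ and $F_+$, together with a reconciliation step to show the two constructions yield the same $\rho$.

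\emph{Step 1 (rotation number for $F_-$).} By \refL{LF}, $F_-$ is non-decreasing and satisfies $F_-(x+1)=F_-(x)+1$, so $F_-^n(x)-x$ is $1$-periodic in $x$. Hence the quantities $a_n:=\sup_x(F_-^n(x)-x)$ and $b_n:=\inf_x(F_-^n(x)-x)$ are finite and attained on $[0,1]$. The decomposition $F_-^{m+n}(x)-x=(F_-^{m+n}(x)-F_-^n(x))+(F_-^n(x)-x)$ together with monotonicity gives the subadditivity $a_{m+n}\le a_m+a_n$ and superadditivity $b_{m+n}\ge b_m+b_n$; Fekete's lemma then produces limits $a_n/n\to\rho:=\inf_n a_n/n$ and $b_n/n\to\rho':=\sup_n b_n/n$. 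Since $F_-^n$ is monotone and commutes with $+1$, any $x,y\in\bbR$ with $0\le y-x\le 1$ satisfy $0\le F_-^n(y)-F_-^n(x)\le 1$, whence $a_n-b_n\le 1$. Thus $\rho=\rho'$ and $|F_-^n(x)-x-n\rho|\le 1$ uniformly in $x$ and $n$, establishing both \eqref{lrho} and \eqref{lrhoO} for $F_-$.

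\emph{Step 2 (comparing $F_-$ and $F_+$).} Repeating Step 1 verbatim for $F_+$ produces a rotation number $\rho_+$, and it suffices to show $|F_-^n(x)-F_+^n(x)|=O(1)$ uniformly. Since $F_-$ and $F_+$ agree off $\bbZ$ and satisfy $F_-(m)-F_+(m)=1-a$ at integers $m$, an induction exploiting monotonicity and the local contraction factor $a$ of the lifts on each unit interval yields the recursion
\[
 F_-^{n+1}(x)-F_+^{n+1}(x)\le a\bigl(F_-^n(x)-F_+^n(x)\bigr)+2(1-a),
\]
whose fixed point is $2$. By induction $|F_-^n(x)-F_+^n(x)|\le 2$ for all $n\ge 0$ and $x\in\bbR$, forcing $\rho_+=\rho$.

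\emph{Step 3 (bounds and the equivalence with $a+b\le 1$).} A direct calculation from \eqref{F-}--\eqref{F+} gives
\[
 F_-(x)-x=b-(1-a)\frax{x},\qquad F_+(x)-x=(a+b-1)+(1-a)\frax{1-x},
\]
and both expressions take values in $[a+b-1,b]$. Telescoping over $n$ iterations yields $x+n(a+b-1)\le F_\pm^n(x)\le x+nb$, so $a+b-1\le\rho\le b$. When $a+b\le 1$, \refE{E<1} and \refE{E1} show that orbits remain in $\oi$, so $F_-^n$ is bounded and $\rho=0$. When $a+b>1$, the identities above give $F_\pm(x)-x\ge a+b-1>0$ strictly, so $F_\pm^n(x)\ge x+n(a+b-1)$ and $\rho\ge a+b-1>0$.

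\emph{Main obstacle.} Steps 1 and 3 are essentially routine. The technical hurdle is Step 2: while $F_-$ and $F_+$ differ only at the isolated integer points, iteration could in principle propagate these small discrepancies unboundedly. The point is that the local contraction factor $a<1$ dominates the at most two jumps of size $1-a$ incurred per iteration (one from possibly having $F_+^n(x)\in\bbZ$, one from the two orbits possibly lying in adjacent unit intervals), and the resulting geometric series caps the total divergence by $2$. Making this case analysis precise is the most delicate part of the argument.
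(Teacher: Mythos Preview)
Your Steps~1 and~3 are essentially the paper's argument, phrased only slightly differently (the paper works with $F_-^n(0)$ rather than with $a_n,b_n$, but the subadditivity/superadditivity content is identical, as are the increment bounds $F_-(x)-x\in[a+b-1,b]$).

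Step~2 is where you diverge from the paper, and where your write-up has a gap. Writing $u_n=F_-^n(x)$, $v_n=F_+^n(x)$, $d_n=u_n-v_n\ge0$, a direct computation from \eqref{F-}--\eqref{F+} gives
\[
d_{n+1}=a\,d_n+(1-a)\bigl(\lfloor u_n\rfloor-\lceil v_n\rceil+1\bigr).
\]
Your claimed recursion $d_{n+1}\le a\,d_n+2(1-a)$ thus requires $\lfloor u_n\rfloor\le\lceil v_n\rceil+1$, which is \emph{not} an unconditional inequality: it fails, for instance, if $v_n\in\bbZ$ and $u_n=v_n+2$. So you cannot first establish the recursion and then deduce $d_n\le2$; the bound on the integer-part term already presupposes a bound on $d_n$. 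The argument can be repaired by proving $d_n<1$ directly by induction (if $d_n<1$ then $\lfloor u_n\rfloor-\lceil v_n\rceil+1\le1$, whence $d_{n+1}<a+ (1-a)=1$), but this is not what you wrote, and your ``main obstacle'' paragraph does not supply the missing case analysis.

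The paper bypasses all of this with a two-line sandwich: for $x'<x<x''$ one has $F_-(x')<F_+(x)<F_-(x'')$ (using $F_+(x)=F_-(x-)$ and strict monotonicity of $F_-$), and this inequality is preserved under iteration, giving $F_-^n(x')<F_+^n(x)<F_-^n(x'')$ for all $n$. Since $F_-^n(x')/n$ and $F_-^n(x'')/n$ both tend to $\rho$, so does $F_+^n(x)/n$, and the $O(1)$ estimate transfers similarly. This avoids any integer-part bookkeeping and is the cleaner route.
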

We also use the notation $\rho(a,b)$.

The rotation number will be important in the sequel.
In particular, we shall see (in \refS{Srational}) 
that 
there exists a periodic orbit
if and only if the rotation number is rational;
moreover, in this case the periodic orbit is unique and is a universal limit
cycle, i.e., it attracts every orbit.

\subsection{Symbolic dynamics}\label{SSsymbolic} 
In the case $a+b\ge1$ (and thus $\tau>0$), 
we code an orbit $(x_i)_0^\infty$ for $f_\pm$
by a symbolic sequence $(\eps_i)_0^\infty$, where $\eps_i\in\setoi$ is defined by
\begin{equation}\label{eps}
  \eps_i:=
  \begin{cases}
0 & x_i\in[0,\tau) \text{ or } (x_i=\tau \text{ and } x_{i+1}=1),
\\   
1 & x_i\in(\tau,1] \text{ or } (x_i=\tau \text{ and } x_{i+1}=0).
  \end{cases}
\end{equation}
See e.g.\ \cite{DingHemmer}, \cite{feely}, \cite{cout}  
for equivalent versions (in the single-valued case); 
see also
\cite{gamb} for deep study of symbolic dynamics in a more general
situation. 

By \eqref{f2-}--\eqref{f2+}, we have
\begin{equation}\label{eps2}
  \eps_i=ax_i+b-x_{i+1}.
\end{equation}

For completeness,  
we define $\eps_i$ by \eqref{eps2} also when $a+b<1$,
although this case is not very interesting: if $a+b<1$ and $b>0$, then
$\eps_i=0$ for all $i$, and if $b=0$, then $\eps_i=0$ except possibly for
one $i$, where we have $\eps_i=-1$.

The proportion of 1's in the symbolic sequence converges for any orbit, 
and the limit equals the rotation number.
This was shown for $f_-$ by \citet{cout};
we extend this to $f_\pm$
in the next theorem;
the proof is given in \refSS{SSrotation-symbolic}.

\begin{theorem}\label{Teps}
  For any orbit $(x_i)_0^\infty$ for $f_\pm$, the corresponding symbolic
  sequence
$(\eps_i)_0^\infty$ satisfies
\begin{equation}
\sum_{i=0}^n \eps_i= \rho n+O(1),   
\end{equation}
where $\rho$ is the rotation number of $f_\pm$.
In particular,
$\sum_{i=0}^{n-1} \eps_i/n\to\rho$ as \ntoo.
\end{theorem}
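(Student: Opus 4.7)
The plan is to lift the orbit to $\bbR$ and to show that the partial sums of the symbolic sequence exactly measure how far the lift has travelled beyond the orbit. Concretely, let $(x_i)_0^\infty$ be an orbit of $f_\pm$ and set $S_n:=\sum_{i=0}^{n-1}\eps_i$. I will define the lift by $X_0:=x_0$ and $X_n:=x_n+S_n$. Using the defining relation \eqref{eps2}, a short calculation gives
\begin{equation*}
X_{n+1}-X_n = (x_{n+1}-x_n)+\eps_n = (a-1)x_n + b,
\end{equation*}
so that $X_{n+1}=aX_n+b+(1-a)S_n$.

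The key step is to identify this recursion with the lifts $F_-,F_+$ from \eqref{F-}--\eqref{F+}. Since $S_n\in\bbZgeo$ and $x_n\in[0,1]$, a simple case analysis shows: if $x_n\in[0,1)$ then $\floor{X_n}=S_n$ and $\frax{X_n}=x_n$, hence $X_{n+1}=F_-(X_n)$; whereas if $x_n=1$ then $X_n$ is the integer $S_n+1$, and the computation $X_{n+1}=a+b+S_n=F_+(X_n)$ handles this case. Hence in all cases
\begin{equation*}
X_{n+1}\in\set{F_-(X_n),F_+(X_n)}.
\end{equation*}

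Now I exploit monotonicity. By \refL{LF}\ref{LF<}, both $F_-$ and $F_+$ are strictly increasing, and $F_+(y)\le F_-(y)$ for every $y\in\bbR$ (equality except at integers). An induction therefore yields the sandwich
\begin{equation*}
F_+^n(x_0)\le X_n\le F_-^n(x_0),\qquad n\ge0.
\end{equation*}
Applying \refL{Lrho} to both endpoints gives $F_\pm^n(x_0)=x_0+\rho n+O(1)$, uniformly in $n$, and therefore $X_n=x_0+\rho n+O(1)$. Since $x_n\in[0,1]$, we conclude
\begin{equation*}
\sum_{i=0}^{n-1}\eps_i = S_n = X_n-x_n = \rho n+O(1),
\end{equation*}
which is exactly the claim (up to reindexing by $1$). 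The degenerate case $a+b<1$ is trivial since then $\rho=0$ by \refL{Lrho} and $\eps_i=0$ for all but at most one $i$ by the discussion after \eqref{eps2}.

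The main obstacle is conceptual rather than computational: one must recognise that the symbolic digits $\eps_i$ are precisely the integer increments of the lift. Once that is in place the rest is routine monotonicity together with an appeal to \refL{Lrho}. A minor technicality to double-check is that the case $x_n=\tau$ (where the orbit makes a choice) is handled correctly, but this is transparent from the recursion since $\tau\notin\bbZ$ when $a+b>1$ and the two possible values of $x_{n+1}$ correspond precisely to the two admissible values of $\eps_n$.
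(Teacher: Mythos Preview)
Your proof is correct and takes a genuinely different route from the paper's.

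The paper argues by case analysis: if the orbit never hits $1$, then $X_n:=x_n+S_n$ satisfies the exact identity $X_n=F_-^n(x_0)$ and one applies \refL{Lrho} directly; symmetrically, if the orbit never hits $0$, one uses $F_+$. Orbits with only finitely many $0$'s or $1$'s reduce to these cases by discarding an initial segment. The remaining case---an orbit containing both $0$ and $1$ infinitely often---is excluded by \refL{LA01}, since that would force both $0$ and $1$ to be periodic.

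You avoid all of this. By observing that $X_{n+1}\in\set{F_-(X_n),F_+(X_n)}$ for \emph{every} step and then sandwiching $F_+^n(x_0)\le X_n\le F_-^n(x_0)$ via monotonicity and $F_+\le F_-$, you treat all orbits uniformly and never need to invoke \refL{LA01}. This is cleaner and conceptually more robust: it makes transparent that the result follows purely from the order structure of the lifts and the uniform estimate \eqref{lrhoO}, with no structural input about periodic points. The paper's approach, on the other hand, yields exact identities (not just a sandwich) along the way, which can occasionally be useful, and the case split mirrors the one used elsewhere in the paper. One minor remark: your claim $S_n\in\bbZgeo$ is only used as $S_n\in\bbZ$, so the argument works verbatim in the degenerate case too, though your separate handling of $a+b<1$ is also fine.
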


\section{Periodic points}\label{Speriodic}\noindent
Recall the definition of periodic points in \refSS{SSdeforbit}.
\begin{lemma}
  \label{LA01}
$0$ and $1$ cannot both be periodic points of $f_\pm$.
\end{lemma}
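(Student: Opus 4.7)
The plan is to split into cases based on the location of $\tau$. The degenerate cases where $\tau$ coincides with an endpoint of $\oi$ or is absent are easily dismissed: if $a+b<1$ (\refE{E<1}) then $f_\pm$ is a linear contraction with fixed point $b/(1-a)\in\ooio$, so the strictly monotone forward orbits of $0$ and $1$ never return; if $a+b=1$ then $\tau\in\set{0,1}$ is a fixed point of $f_\pm$ while the forward orbit of the other endpoint is strictly monotone and converges to that fixed point without ever reaching it; and if $b=0$ (\refE{E0}) then $0$ is the fixed point and the orbit $1,a,a^2,\dots$ of $1$ is strictly monotone toward $0$ without reaching it. So in each boundary situation at most one of $0,1$ is periodic.

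The main case is $a+b>1$, which forces $b>0$ and $\tau\in\ooio$. Here the key observation is that $ax+b\in\bbZ$ for $x\in\oi$ has the unique solution $x=\tau$ (namely $ax+b=1$). Hence $\tau$ is the unique preimage of both $0$ and $1$ under $f_\pm$, and the single-valued inverse $\hat f:=f_\pm\qw:[0,a+b-1]\cup[b,1]\to\oi$ satisfies $\hat f(0)=\hat f(1)=\tau$. Assume toward contradiction that $0$ and $1$ have minimal periodic orbits $(x_i)_{i=0}^k$ and $(y_j)_{j=0}^{k'}$ respectively. Then necessarily $x_{k-1}=y_{k'-1}=\tau$, since $\tau$ is the only way to re-enter $\set{0,1}$.

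A straightforward induction on $j$, using the injectivity of $f_\pm$, now shows that the backward orbit $\tau_j:=\hat f^j(\tau)$ reproduces each reversed forward orbit: $\tau_j=x_{k-1-j}$ for $0\le j\le k-1$, so $\tau_{k-1}=0$ and $\tau_k=\hat f(0)=\tau$; analogously $\tau_{k'-1}=1$ and $\tau_{k'}=\tau$. Thus $(\tau_j)$ is periodic with some minimal period $K$ dividing both $k$ and $k'$, and $\tau$ appears only at positions that are multiples of $K$. Reducing the indices $k-1$ and $k'-1$ modulo $K$ then forces $\tau_{K-1}$ to equal $0$ and $1$ simultaneously, a contradiction. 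The only real subtlety is verifying that the backward iteration stays inside the domain $[0,a+b-1]\cup[b,1]$ of $\hat f$ for enough steps to carry out this argument, but this is automatic since every $\tau_j$ produced in the induction equals some $x_i$ (or $y_j$) with positive index and therefore lies in the image of $f_\pm$ by construction.
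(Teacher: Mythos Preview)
Your proof is correct and follows essentially the same route as the paper: both arguments exploit that $f_\pm^{-1}$ is single-valued with $f_\pm^{-1}(0)=f_\pm^{-1}(1)=\tau$, so the backward orbit of $\tau$ is forced. The paper's version is slightly more direct: it observes that if $0$ lies in a minimal periodic orbit then $1$ cannot, since the predecessor of $1$ would again be $\tau$, giving two occurrences of $\tau$ in a minimal cycle; you instead introduce the minimal period $K$ of the backward sequence $(\tau_j)$ and derive $\tau_{K-1}=0=1$. These are the same contradiction packaged differently. Your explicit treatment of the degenerate cases $a+b\le1$ and $b=0$ is fine; the paper handles them in a one-line remark after the proof. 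One small wording issue: when you say each $\tau_j$ equals some $x_i$ ``with positive index'', note that $\tau_{k-1}=x_0$ has index $0$, but periodicity gives $x_0=x_k\in f_\pm(x_{k-1})$, so it is still in the image of $f_\pm$ as needed.
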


\begin{proof}
  Suppose that 0 is a periodic point, and consider a minimal periodic orbit
  $x_0,\dots,x_{k-1}$ with $x_0=0$. Recall that $f_\pm\qw$ is single-valued,
  and $f_\pm\qw(0)=\tau$. Thus $x_{k-1}=\tau$. Furthermore, if $x_i=1$ for
  some $i\le k-1$, then $i>0$ and
$x_{i-1}=f_\pm\qw(1)=\tau=x_{k-1}$, which is impossible
  since this   periodic orbit is minimal. Consequently, the backwards orbit 
$Q:=\set{f_\pm^{-n}(\tau):n\ge0}=\set{x_j:0\le j<k}$ contains 0 but not 1.

Similarly, if 1 is a periodic point, then $Q$ contains 1 but not 0.

Thus these two events exclude each other. 
\end{proof}
Note that the proof is valid also when $\tau\in\setoi$, which occurs
precisely in the simple cases in Examples \ref{E1} and \ref{E0},
and when $\tau$ does not exist (then 0 and 1 are not in the image of
$f_\pm$, and thus certainly not periodic points). 

\begin{lemma}\label{LA}
Suppose that $p\in\oi$ is a periodic point of $f_\pm$.
Then $p$ is a periodic point of $f_-$ or $f_+$ (or both).
\end{lemma}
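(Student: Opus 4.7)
The strategy I would follow is to reduce to a \emph{minimal} periodic orbit of $p$, where all points are distinct, and then exploit the fact that the multi-valued map $f_\pm$ can only disagree with $f_-$ and $f_+$ at the single point $\tau$, which can appear at most once in such an orbit. The main (minor) obstacle is the reduction step, which relies crucially on $f_\pm$ being injective with a single-valued inverse.

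First, I would show that $p$ has a minimal periodic orbit, meaning a periodic orbit $(x_n)_0^\infty$ of $p=x_0$ with period $q_0$ such that $x_0,\dots,x_{q_0-1}$ are distinct. Starting from any periodic orbit $(y_n)$ of $p$ of period $q$, if $y_i=y_j$ for some $0\le i<j<q$, then applying the single-valued inverse $f_\pm^{-1}$ (whose single-valuedness is recorded in \refS{Sdef}) exactly $i$ times gives $p=y_0=y_{j-i}$, so $p$ has a periodic orbit of strictly smaller period $j-i$. Iterating this reduction (or simply choosing $q$ minimal among all periods of periodic orbits of $p$) produces the desired minimal periodic orbit.

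With such a minimal orbit $(x_n)$ in hand, the discontinuity point $\tau$ (when it exists) appears at most once among $x_0,\dots,x_{q_0-1}$. If $\tau$ is absent from the orbit, or $\tau$ does not exist at all as in the trivial case of \refE{E<1}, then $f_\pm$ is single-valued at each $x_n$ and agrees with both $f_-$ and $f_+$ there, so $(x_n)$ is a periodic orbit of both $f_-$ and $f_+$. If $\tau=x_i$ for a unique index $i$, then the multi-valued choice $x_{i+1}\in f_\pm(\tau)=\setoi$ is either $0=f_-(\tau)$, making $(x_n)$ a periodic orbit of $f_-$, or $1=f_+(\tau)$, making it a periodic orbit of $f_+$; at every other step $f_-$ and $f_+$ coincide with $f_\pm$. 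In either case $p=x_0$ is a periodic point of $f_-$ or $f_+$, and the edge cases where $\tau\in\setoi$ (Examples \ref{E1} and \ref{E0}) cause no additional difficulty.
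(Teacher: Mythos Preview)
Your proof is correct, but it takes a genuinely different route from the paper's.

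The paper's argument is a two-line application of \refL{LA01}: given any periodic orbit $C=\{p_0,\dots,p_{k-1}\}$ of $p$ (not assumed minimal), \refL{LA01} forbids $0$ and $1$ from both lying in $C$; if $0\notin C$ then every time the orbit passes through $\tau$ the successor must be $1=f_+(\tau)$, so $C$ is already an $f_+$-orbit, and symmetrically if $1\notin C$ it is an $f_-$-orbit.

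You instead bypass \refL{LA01} entirely by first reducing to a \emph{minimal} periodic orbit via the single-valued inverse $f_\pm^{-1}$, and then using that $\tau$ occurs at most once in such an orbit. This is a perfectly valid alternative: your argument is self-contained and does not depend on the (slightly more delicate) \refL{LA01}, at the cost of the extra reduction step. The paper's version is shorter because \refL{LA01} is already available, and it has the minor advantage of showing that the \emph{given} periodic orbit (not just some shorter one) is an $f_-$- or $f_+$-orbit. Either approach is fine here; they simply trade off where the work is done.
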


\begin{proof}
By assumption, there exists $k\ge1$ and a periodic orbit
$C=\set{p_0,\allowbreak\dots,p_{k-1}}$ with $p_0=p$. 
By \refL{LA01}, 0 and 1 cannot both appear in $C$.
If $0\notin C$, then $C$ is a periodic orbit of $f_+$, and 
if $1\notin C$, then $C$ is a periodic orbit of $f_-$.
\end{proof}

\begin{theorem}\label{T1}
Suppose that $f_\pm$ has a periodic orbit $C$. 
Then $f_\pm$ is asymptotically periodic and $C$ is the universal limit cycle for
$f_\pm$.
\end{theorem}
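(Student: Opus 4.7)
The plan is to reduce to the single-valued case of $f_-$ and $f_+$, using \refL{LA} and \refL{LA01} to bridge the gap.

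I would first invoke \refL{LA} to note that the given periodic orbit $C$ is also a periodic orbit of $f_-$ or of $f_+$. By symmetry, assume it is a periodic orbit of $f_-$. Known results for the single-valued map $f_-$ as a piecewise contraction (cf.\ \cite{bugeaudCR, conze, cout}) then yield that $f_-$ has $C$ as its unique periodic orbit and as its universal limit cycle: every $f_-$-orbit converges to $C$.

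Next I would show $C$ is the unique periodic orbit of $f_\pm$ as well. Suppose a second periodic orbit $C'$ existed. By \refL{LA}, $C'$ is a periodic orbit of $f_-$ or $f_+$. The single-valued uniqueness for $f_-$ rules out $C'$ being an $f_-$-orbit, so $C'$ must be an $f_+$-orbit containing $\tau$ and using $f_+(\tau)=1$, forcing $1 \in C'$. By \refL{LA01} then $0$ cannot also be a periodic point of $f_\pm$, so $0 \notin C$. However, if $\tau \in C$, then $C$ uses $f_-(\tau) = 0$ at that step, forcing $0 \in C$ --- a contradiction; and if instead $\tau \notin C$, then $C$ is simultaneously an $f_+$-orbit, contradicting single-valued uniqueness for $f_+$ together with $C'$.

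For an arbitrary $f_\pm$-orbit $(x_n)$: if it never visits $\tau$, then it coincides with an $f_-$-orbit and converges to $C$ by the single-valued result. If it does visit $\tau$, then at that step the orbit chooses either $0$ or $1$. A key consequence of the uniqueness just established is that after any \emph{escape} from the $C$-branch at $\tau$, the orbit cannot later revisit $\tau$: otherwise the finite sub-orbit between two consecutive $\tau$-visits would form a second periodic orbit of $f_\pm$, contradicting uniqueness. Hence after at most one such escape the orbit becomes single-valued and converges to $C$; the remaining possibility is that the orbit stays entirely within $C$, for which convergence is automatic. This shows $C$ is the universal limit cycle and $f_\pm$ is asymptotically periodic.

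The main obstacle will be the careful case analysis at the discontinuity $\tau$: distinguishing whether $\tau$ lies in $C$ or not, and ruling out parasitic second periodic orbits by combining \refL{LA}, \refL{LA01}, and the single-valued theory.
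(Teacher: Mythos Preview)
Your approach is correct and takes a genuinely different route from the paper's proof. The paper gives a fully self-contained argument: assuming (say) that $1$ is not a periodic point, it proves directly that every $f_-$-orbit converges to $C$ by lifting $C$ to a biinfinite sequence $\Xi\subset\bbR$, analysing how $F_-$ permutes the intervals between consecutive points of $\Xi$, and showing that each such interval is eventually mapped through an integer, after which contraction takes over (this is the bulk of the work, \eqref{euclid}--\eqref{puh2}). Only at the very end does it handle the multi-valued choice at $\tau$, observing that the transition $\tau\mapsto 1$ can occur at most once since $1$ is not periodic. Uniqueness of $C$ (\refC{C1}) is obtained as a \emph{corollary}.

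You instead invert the logic: you import the single-valued convergence for $f_-$ (and implicitly $f_+$) from the literature as a black box, then prove uniqueness of $C$ for $f_\pm$ \emph{first} via \refL{LA} and \refL{LA01}, and finally use uniqueness to show that after an ``escape'' at $\tau$ the orbit cannot revisit $\tau$ (else one would close up a second periodic orbit). This is cleaner and shorter, at the price of relying on \cite{bugeaudCR,conze,cout} for the single-valued case; the paper presumably wanted a self-contained treatment. Two small points to tighten: (i) your phrase ``escape from the $C$-branch'' tacitly assumes $\tau\in C$; when $\tau\notin C$ the argument is even simpler (any return to $\tau$ would yield a periodic orbit containing $\tau\notin C$), and you should say so explicitly. (ii) When you invoke ``single-valued uniqueness for $f_+$'' in the uniqueness step, note that the cited references treat $f_-$; you should appeal to the reflection in \refR{Rreflection} to transfer the result to $f_+$.
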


\begin{proof}
By assumption, there exists a periodic orbit $C=\set{p_0,\dots,p_{q-1}}$ of
$f_\pm$.

Suppose first that 1 is not a periodic point of $f_\pm$.
Then $p_i<1$ for every $i$, and it follows,
as in the proof of \refL{LA},
that $C$ is a periodic orbit of $f_-$.
We may assume that the orbit is minimal,
so $p_0,\dots,p_{q-1}$ are distinct.
We consider first only the action of $f_-$. 

Let $\xi_0,\dots,\xi_{q-1}$ be $p_0,\dots,p_{q-1}$ arranged in increasing
order; thus $0\le \xi_0<\dots<\xi_{q-1}<1$. Extend this to a doubly infinite
increasing sequence $\Xi=\set{\xi_n}_{-\infty}^\infty$ by
\begin{equation}\label{abc}
  \xi_{mq+i}:=\xi_i+m,
\qquad 0\le i<q,\; m\in\bbZ.
\end{equation}
It follows, using \refL{LF}, that $F_-$ maps the set $\Xi$ into itself.
Moreover, if $0\le i<q$, then 
$\pis_-\circ F_-^q(p_i)=f_-^q\circ\pis_-(p_i)=f_-^q(p_i)=p_i$ and thus
$F_-^q(p_i)=p_i+r_i$ for some $r_i\in\bbZ$.
It follows, using \refL{LF} again, that $F_-^q(\Xi)=\Xi$, and thus
$F_-:\Xi\to\Xi$ is onto. Since $F_-$ is strictly increasing, it follows that
there exists an integer $r$ such that
\begin{equation}\label{euclid}
  F_-(\xi_n)=\xi_{n+r},
\qquad n\in\bbZ.
\end{equation}
In particular, this implies that, recalling \eqref{abc},  
\begin{equation}\label{archimedes}
  F_-^q(\xi_n)=\xi_{n+qr}=\xi_n+r,
\qquad n\in\bbZ.
\end{equation}

Let $I_i:=(\xi_{i},\xi_{i+1}]$ and $\bI_i:=[\xi_{i},\xi_{i+1}]$,
for $i\in\bbZ$.
Since $F_-$ is strictly increasing, \eqref{euclid} implies  that
$F_-(\bI_i)\subseteq \bI_{i+r}$. Moreover, if $I_i\cap \bbZ=\emptyset$, then $F_-$
is linear (and thus continuous) on $\bI_i$, and  $F_-(\bI_i)=\bI_{i+r}$; since
$F_-$ has contraction factor $a$, this implies $|\bI_{i+r}|=a|\bI_i|$.

Suppose that none of the $q$ intervals $I_i, I_{i+r}, \dots, I_{i+(q-1)r}$
contains an integer. Then $F_-$ is a linear contraction
$\bI_{i+jr}\to\bI_{i+(j+1)r}$ for each $j$, and in particular
$|\bI_{i+(j+1)r}|=a|\bI_{i+jr}|$. Hence, 
$|\bI_{i+qr}|=a^q|\bI_{i}|$, which is a contradiction, since
$\bI_{i+qr}=\bI_i+r$ by \eqref{abc}.

Consequently, for each $i$, at least one of the
$q$ intervals $I_i, I_{i+r}, \dots, I_{i+(q-1)r}$ contains an integer.
Taking $i=i_0,\dots,i_0+r-1 $ for some $i_0$, 
we see that the $rq$ disjoint intervals $I_j$, 
$i_0\le j<i_0+rq$, 
contain at least $r$ integers. On the other hand, the union of
these intervals is $(\xi_{i_0},\xi_{i_0+rq}]=(\xi_{i_0},\xi_{i_0}+r]$, which contains
exactly $r$ integers. It follows that for every $i\in\bbZ$, 
exactly one of the
$q$ intervals $I_i, I_{i+r}, \dots, I_{i+(q-1)r}$ contains an integer.
(Also, no $I_i$ contains two integers.)

Suppose that 
$j\in\bbZ$ is such that
$I_j$ contains an integer $\ell_j$.
Then $F_-$ is linear on
$I_j':=[\xi_j,\ell_j)$ and on $I_j'':=[\ell_j,\xi_{j+1}]$, and maps both
intervals into $\bI_{j+r}$. 
Since there is no integer in any of
$I_{j+r},\dots,I_{j+(q-1)r}$ by the argument above,
we can apply $F_-$ repeatedly and see that $F_-^m$ is linear on $I_j'$ and
$I_j''$ for $1\le m\le q$. In particular, $F_-^q$ is linear on $I_j'$ and
$I_j''$. Since $F_-^q(\xi_j)=\xi_j+r$ and $F_-^q(\xi_{j+1})=\xi_{j+1}+r$ by
\eqref{archimedes}, and $F_-^q$ has contraction factor $a^q<1$, it follows that
$F_-^q:I_j'\to I_j'+r$ and $F_-^q:I_j''\to I_j''+r$, and we can thus iterate
further.
Consequently, if $x\in I_j'$ then,
for every $n\ge0$,
\begin{equation}\label{puh}
F_-^n(x)-F_-^n(\xi_j)=a^n(x-\xi_j).
\end{equation}
It follows also, for example by \eqref{F-} and \eqref{puh} for $n$ and $n+1$,
that
$\floor{F_-^n(x)}=\floor{F_-^n(\xi_j)}$, and thus, using 
\eqref{puh} again and
\refL{LF}\ref{LFpi},  
\begin{equation}\label{puh2}
f_-^n(\frax{x})-f_-^n(\frax{\xi_j})
=\frax{F_-^n(x)}-\frax{F_-^n(\xi_j)}=a^n(x-\xi_j).
\end{equation}
Hence,
$f_-^n(\frax{x})-f_-^n(\frax{\xi_j})\to0$ as \ntoo,
and since $\frax{\xi_j}=\xi_{j\bmod q}\in C$, $\frax{x}$ is attracted to the
periodic orbit $C$ by $f_-$.
Similarly, if $x\in I_j''$, then
$f_-^n(\frax{\xi_{j+1}})-f_-^n(\frax{x})\to0$ as \ntoo,
and again $\frax{x}$ is attracted to $C$.
We have shown that if $x\in \bI_j$ and 
$I_j\cap\bbZ\neq\emptyset$, then $\frax x\in\oio$ is
attraced to $C$ by $f_-$.

Now let $x\in \bI_j$ with $j$ arbitrary. 
Then there exists $m$ with $0\le m<q$ such that $I_{j+mr}\cap\bbZ\neq\emptyset$.
Furthermore, $F_-^{m}(x)\in \bI_{j+mr}$, and thus the argument above 
applies to $F_-^m(x)$, and shows that
$\frax{F_-^m(x)}=f_-^m(\frax{x})$ is attracted to $C$ by $f_-$; consequently
also $\frax{x}$ is attracted to $C$.

This shows 
that every $x\in\oio$ is attracted to the periodic orbit $C$ by $f_-$.
Moreover, $f_-(1)\in\oio$, and thus it follows that $1$ too is attracted to
$C$ by $f_-$.

It remains to show that every point is attracted to $C$ also by $f_\pm$,
\ie, even when we allow $\tau\to f_+(\tau)=1$ instead of $\tau\to f_-(\tau)=0$.
If  $\set{x_n}$ is  an orbit that makes
the transition $\tau\mapsto 1$ only once, 
then the development after this is  by $f_-$,
and thus the sequence is attracted to $C$.
The only possible problem is thus when we make the transition $\tau\mapsto
1$ at least twice, but 
then $1$ appears at least twice in the orbit $\set{x_n}$, and thus there is
a periodic orbit containing 1, contradicting our assumption.

This completes the proof that if 1 is not a periodic point, then every orbit
is attracted to $C$. 

If 0 is not a periodic point, the same conclusion holds
by mirror symmetry, see \refR{Rreflection}, or by repeating the proof above
with $F_+$ instead of $F_-$, mutatis mutandis.

Since either 0 or 1 is not a periodic point by \refL{LA01}, this completes
the proof.
\end{proof}

\begin{corollary}  \label{C1}
The dynamical system $f_\pm$ has at most one periodic orbit.
\nopf
\end{corollary}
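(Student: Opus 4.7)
The plan is to derive the uniqueness of the periodic orbit as a direct consequence of \refT{T1}. Suppose, for contradiction, that $f_\pm$ has two distinct periodic orbits $C$ and $C'$. Without loss of generality we may take both to be minimal, so $C=\set{p_0,\dots,p_{q-1}}$ with $p_0,\dots,p_{q-1}$ distinct and $p_{n+q}=p_n$, and similarly $C'=\set{p'_0,\dots,p'_{q'-1}}$ with $p'_{n+q'}=p'_n$.

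Since $f_\pm$ has at least one periodic orbit (namely $C$), \refT{T1} tells us that $C$ is a \emph{universal} limit cycle. In particular, the orbit $(p'_n)_{n=0}^\infty$ starting at $p'_0\in C'$ must converge to $C$. By the definition of convergence to a limit cycle given in \refSS{SSdeforbit}, there exists an integer $j$ such that
\begin{equation*}
p'_n - p_{(j+n)\bmod q}\to 0 \qquad\text{as } n\to\infty.
\end{equation*}
Here I would exploit the key observation that the left-hand side is a sequence that is periodic in $n$ with period $\operatorname{lcm}(q,q')$; a periodic real sequence that tends to $0$ must be identically zero. Hence $p'_n=p_{(j+n)\bmod q}$ for every $n\ge0$, so $C'\subseteq C$ as sets. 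Setting $n=q'$ and using $p'_{q'}=p'_0$ yields $p_{(j+q')\bmod q}=p_{j\bmod q}$, and minimality of $C$ forces $q\mid q'$. On the other hand, $C'$ has $q'$ distinct elements all lying in $C$, so $q'\le q$. Combining these gives $q'=q$ and $C'=C$, contradicting our assumption that they were distinct.

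There is essentially no technical obstacle: the only subtlety is the step that turns \lq\lq{}two periodic orbits both tend to each other\rq\rq{} into an honest equality, and this reduces to the trivial fact that a periodic sequence converging to $0$ vanishes identically. All other ingredients (existence of a universal limit cycle, the notion of convergence to a limit cycle) are already supplied by \refT{T1} and the definitions in \refSS{SSdeforbit}.
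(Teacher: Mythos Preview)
Your proof is correct and follows exactly the route the paper intends: the corollary is marked \nopf{} because it is an immediate consequence of \refT{T1}, and you have simply spelled out the one-line argument (a periodic orbit converges only to itself, so if $C$ is a universal limit cycle and $C'$ is another periodic orbit, then $C'=C$). The periodicity-of-the-difference observation is the natural way to make this rigorous.
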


It follows from \eqref{archimedes} in the proof above that if $f_\pm$
has a periodic orbit, then the rotation number is rational
($r/q$ in the notation above). 
In fact, the converse holds too; we return to this in \refT{TR}.

\section{A classification of orbits}\label{Sorbits+}\noindent
We now clarify what the possibilities are for orbits of $f_\pm$.

If $x\in\oi$ has an orbit for $f_\pm$ that does not contain $\tau$, then
there is 
never any choice, and this orbit is simultaneously the orbit of $x$ for both
$f_-$ and $f_+$, and the unique orbit for $f_\pm$.
Hence, our consideration of the multi-valued $f_\pm$ lead to complications
only when $x$ has an orbit containing $\tau$, 
\ie, when $x$ is in the countable (or finite) set
$\AAA:=\set{f_\pm^{-n}(\tau):n\ge0}$. 

Consider first the case when $\tau$ does not belong to any periodic orbit. Then
no orbit can contain $\tau$ more than once; hence if $x$ has an orbit
containing $\tau$, then $\tau$ will not appear again, which means that there
are no further choices.
Consequently, if $x\in\AAA$, then $x$ has exactly two orbits for $f_\pm$, one is
its orbit 
for $f_-$ and the other is its orbit for $f_+$; furthermore, both orbits agree
until they reach $\tau$, and then they follow the unique orbits of 0 and 1 (for
$f_-$, $f_+$ or $f_\pm$). Hence, for the asymptotical behaviour of the
orbits, it does not matter whether we consider $f_-$, $f_+$ or $f_\pm$.

On the other hand, if $\tau$ belongs to a periodic orbit $C$, and $x\in\AAA$,
then $x$ has an infinite number of orbits for $f_\pm$: the orbit is unique
until we reach $\tau$, but then we can either continue along the periodic
orbit $C$ repeatedly for ever, or we can go around $C$
$N$ times, where $N=0,1,2,\dots$, and then make the other choice at $\tau$;
this brings us to either 0 or $1\notin C$, and then we cannot come back to
$\tau$, by 
\refL{LA01}, so the orbit continues with the unique orbit of 0 or 1.

This leads to the following possibilities for the orbits of an arbitrary
$x\in\oi$. 

\case{Case 1. There exists a periodic orbit $C$.}
By \refT{T1} (and \refC{C1}), 
$C$ is the only periodic orbit, and every orbit is asymptotic
to $C$.
We distinguish two subcases.

\case{Case 1a. $\tau\notin C$.}
Then $\tau$ does not belong to any periodic orbit, and thus
no orbit can contain $\tau$ more than once.
Hence, starting at an arbitrary $x\in\oi$, either there is a unique orbit
for $f_\pm$ ($x\notin\AAA$), or there are two orbits ($x\in\AAA$), 
one (the orbit for $f_-$) containing 0 and one (the orbit for $f_+$)
containing 1. All orbits 
are asymptotic to $C$.
Hence, $\go_{f_\pm}(x)=C$ for every $x\in\oi$.
Furthermore, it follows from the proof of \refT{T1} (see \eqref{puh2}) 
that the orbits converge uniformly 
to $C$, and thus $\gL_\pm=\gL_-=\gL_+=C$.

\case{Case 1b. $\tau\in C$.}
Then either $0\in C$ or $1\in C$, but not both (\refL{LA01}).
Suppose that $0\in C$. (The case $1\in C$ is symmetric, with $0$ and $1$
and the indices $+$ and $-$ interchanged below.)

If $x\notin\AAA$, then $x$ has a unique orbit, which by \refT{T1} is
asymptotic to $C$. If $x\in\AAA$, then $x$ has an infinite number of orbits,
as described above; one follows eventually $C$ for ever (this is the orbit
for $f_-$), while all others eventually follow the unique orbit of $1$. 
Each orbit is asymptotic to $C$, and
$\go_{f_\pm}(x)=C$ for every $x\in\oi$. However, for $x\in\AAA$, the orbits
do not converge to $C$ uniformly. It follows easily that if $O_1$ is the
(unique) orbit of $1$, then
$\gL_\pm=C\cup O_1$, $\gL_-=C$ and $\gL_+=\emptyset$.

\case{Case 2. There is no periodic orbit of $f_\pm$.}
As in Case 1a, any $x\in\oi$ has either one or two orbits.
$\gL_\pm$ is infinite, and
we shall see in \refS{Sirrational}
that $\go_{f_\pm}(x)=\gL_\pm$ for every $x\in\oi$.
Furthermore, the orbits converge to $\gL_\pm$ uniformly.

\section{The rotation number}\label{Srotation}\noindent
For completeness, we supply a simple proof of the existence of a rotation 
number in our context 
(\refL{Lrho}), based on earlier proofs for $f_-$,
see \eg{} \cite{conze}, \cite{cout}.
We also, again for completeness, prove the simple consequence 
\refT{Teps}, that the proportion of $1$'s in the symbolic sequence converges
to the rotation number; see again \cite{cout} for $f_-$.
Finally, we use this to show that every orbit has an asymptotic average,
which is independent of the orbit.

\subsection{Existence of the rotation number}
\begin{proof}[Proof of \refL{Lrho}]
We first observe that for any $x\in\bbR$ and any $n\ge0$,
  \begin{equation}\label{p1}
	\bigabs{F_-^n(x)-x-F_-^n(0)}<1.
  \end{equation}
In fact, $F_-^n(x)-x$ has period 1, so it suffices to consider $x\in[0,1)$,
  and then
  \begin{equation}
F_-^n(x)-x\le	F_-^n(x)< F_-^n(1)=F_-^n(0)+1
  \end{equation}
and
\begin{equation}
  F_-^n(x)-x>F_-^n(x)-1\ge F_-^n(0)-1,
\end{equation}
which verifies \eqref{p1}.

Taking $x=F_-^m(0)$ in \eqref{p1} we obtain, for $m,n\ge0$,
\begin{equation}\label{fmn0}
\bigabs{ F_-^{m+n}(0)-F_-^m(0)-F_-^n(0)}<1.
\end{equation}
Consequently,
\begin{equation}
  F_-^{m+n}(0)+1 \le \bigpar{F_-^m(0)+1}+\bigpar{F_-^n(0)+1},
\end{equation}
i.e., the sequence $F_-^n(0)+1$ is subadditive. 
As is well-known, this implies the existence of the limit
\begin{equation}\label{rhoo2}
\rho=
 \lim_{\ntoo}\frac{F_-^n(0)+1}n =   \inf_{n\ge1}\frac{F_-^n(0)+1}n \ge-\infty.
\end{equation}
We thus have $F_-^n(0)/n\to\rho$ as \ntoo, and 
it follows from \eqref{p1} that $F_-^n(x)/n\to\rho$ for any $x\in\bbR$.

The corresponding result for $F_+$ then holds too since, if
$x'<x<x''$, then, by \refL{LF}\ref{LF<} and $F_+(x)=F_-(x-)$, 
$F_-(x')<F_+(x)<F_-(x'')$, 
and thus by induction $F_-^n(x')<F_+^n(x)<F^n_-(x'')$. 
Hence, \eqref{lrho} holds.

Furthermore, \eqref{fmn0} implies similarly that sequence $F_-^n(0)-1$ is
superadditive, and thus also 
\begin{equation}\label{rhoo3}
\rho=
 \lim_{\ntoo}\frac{F_-^n(0)-1}n =   \sup_{n\ge1}\frac{F_-^n(0)-1}n.
\end{equation}
By \eqref{rhoo2} and \eqref{rhoo3}, 
 $n\rho \le F_-^n(0)+1$ and
 $n\rho \ge F_-^n(0)-1$. 
Consequently, 
\begin{equation}\label{er}
\rho n-1 \le F_-^n(0)\le \rho n+1,
\qquad n\ge0.
\end{equation}
It follows from \eqref{er} and \eqref{p1} that for any real $x$,
\begin{equation}
\rho n+x-2<  F_-^n(x)< \rho n+x+2,
\qquad n\ge0,
\end{equation}
showing \eqref{lrhoO}.

If $x\ge0$, then by \eqref{F-}, $F_-(x)\ge F_-(0)=b\ge0$, and thus
by induction $F_-^n(0)\ge0$ for all $n\ge1$; hence $\rho\ge0$.
Similarly, \eqref{F-} implies 
$F_-(x)-x=b-(1-a)\frax{x}\in[b+a-1,b]$, and 
hence by induction $n(a+b-1)\le F_-^n(0)\le nb$.
Consequently, $a+b-1\le \rho\le b<1$, showing both \eqref{rho-ab} and
$\rho\in\oio$. 

Finally, if $a+b\le1$, then $x\in\oio$ implies by \eqref{F-} 
$F_-(x)= ax+b<a+b\le1$ and thus $F_-(x)\in\oio$; 
hence $F_-^n(0)\in\oio$, and $\rho=\lim_\ntoo F^n(0)/n=0$.
The converse follows by \eqref{rho-ab}.
\end{proof}

\subsection{Proof of \refT{Teps}}\label{SSrotation-symbolic}
Suppose first that the orbit does not contain 1; then
  $x_{n+1}=f_-(x_n)=\frax{ax_n+b}$ for $n\ge0$,
and it follows from \eqref{F-} and \eqref{eps2} by induction that
\begin{equation}
F_-^n(x_0)=x_n+\sum_{i=0}^{n-1}\eps_i.  
\end{equation}
Hence, $\sum_{i=0}^{n-1}\eps_i=F_-^n(x_0)+O(1)=n\rho+O(1)$ by \eqref{lrhoO},
and the result follows.

If the orbit contains only a finite number of 1's, then the result follows
by considering the part of the orbit after the last 1.

Similarly, if the orbit does not contain 0, then 
\begin{equation}
F_+^n(x_0)=x_n+\sum_{i=0}^{n-1}\eps_i,  
\end{equation}
and the conclusion follows by  \eqref{lrhoO}.
Again, this extends to any orbit with a finite number of 0's.

The only remaining case is thus an orbit that contains an infinite number of
0's and an infinite number of 1's. However, no such orbit can exist; in
fact, if there were an orbit with both 0 and 1 ocurring more than once, then
both 0 and 1 would be periodic points, but that is impossible by
\refL{LA01}.
\qed

\subsection{The average of an orbit}\label{SSmean}

The following theorem shows that every orbit has an average, in the sense of
the limit of the average of the $n$ first points; 
furthermore, this limit is independent of the orbit, and we provide an
explicit formula.

\begin{theorem}\label{Tmean}
  Let $(x_n)\ooz$ be any orbit of $f_\pm$, with  any initial point
  $x_0\in\oi$.
Then, as \ntoo,
\begin{equation}\label{tmean}
\frac{1}{n}\sum_{i=0}^{n-1} x_i \to 
\mean:=\frac{b-\rho}{1-a}.
\end{equation}
\end{theorem}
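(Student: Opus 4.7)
The plan is to derive \eqref{tmean} from the symbolic identity \eqref{eps2} by a simple Abel/telescoping argument, using \refT{Teps} to control the symbolic sum.

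First I would start with \eqref{eps2}, namely $\eps_i = ax_i + b - x_{i+1}$, and sum from $i=0$ to $i=n-1$. This gives
\begin{equation*}
\sum_{i=0}^{n-1}\eps_i = a\sum_{i=0}^{n-1} x_i + nb - \sum_{i=1}^{n} x_i
= (a-1)\sum_{i=0}^{n-1} x_i + nb + x_0 - x_n.
\end{equation*}
Rearranging,
\begin{equation*}
(1-a)\sum_{i=0}^{n-1} x_i = nb - \sum_{i=0}^{n-1}\eps_i + x_0 - x_n.
\end{equation*}

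Next I would divide by $n$ and take the limit. Since $x_0,x_n\in[0,1]$, the term $(x_0-x_n)/n\to0$. By \refT{Teps}, $\frac1n\sum_{i=0}^{n-1}\eps_i\to\rho$. Hence
\begin{equation*}
(1-a)\cdot\frac1n\sum_{i=0}^{n-1} x_i \to b-\rho,
\end{equation*}
and since $1-a>0$ by assumption, dividing yields the claimed limit $\mean=(b-\rho)/(1-a)$.

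There is essentially no obstacle: all the work has already been done in \refT{Teps}, and the argument above is just a telescoping of \eqref{eps2} combined with the boundedness of the orbit in $\oi$. One small remark is that \eqref{eps2} (and hence \refT{Teps}) was stated primarily for the case $a+b\ge1$ but was extended to the trivial case $a+b<1$ in \refSS{SSsymbolic}, so the formula $\mean=(b-\rho)/(1-a)$ holds uniformly; note it then reduces to $b/(1-a)$, which is exactly the attracting fixed point identified in \refE{E<1}, giving a consistency check.
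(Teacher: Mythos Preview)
Your proof is correct and essentially identical to the paper's own proof: both sum the identity \eqref{eps2} over $i=0,\dots,n-1$, invoke \refT{Teps} for $\sum_{i=0}^{n-1}\eps_i=\rho n+O(1)$, and use $x_0,x_n\in\oi$ to absorb the boundary terms. The paper phrases it as computing $aS_n+nb$ with $S_n:=\sum_{i=0}^{n-1}x_i$, but this is the same telescoping manipulation you carry out.
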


\begin{proof}
  Let $S_n:=\sum_{i=0}^{n-1} x_i$. Then, using \eqref{eps2} and \refT{Teps},
  \begin{equation}
    \begin{split}
aS_n+nb&
=\sum_{i=0}^{n-1}(ax_i+b)    
=\sum_{i=0}^{n-1}(x_{i+1}+\eps_i)    
=\sum_{i=1}^{n}x_{i}+\sum_{i=0}^{n-1}\eps_i
\\&
=S_n+x_n-x_0+\rho n+O(1)
=S_n+n\rho+O(1).
    \end{split}
  \end{equation}
Consequently,
\begin{equation}
  S_n=n\frac{b-\rho}{1-a}+O(1).
\end{equation}
This implies \eqref{tmean}.
\end{proof}

In particular, if there exists a periodic orbit $(x_n)_0^{k-1}$, then the
average of the points in the orbit is $\chi$.
For an example, see \refE{E2/3}, where $\rho=1/2$ and $\mean=1/3$.

For a more trivial example, suppose that 
 there is a fixed point $p_0$. Then $\rho=0$, and \eqref{tmean} implies
that $p_0=\chi=b/(1-a)$, as is immediately seen directly.

\section{Location of the rotation number}\label{Sloc}\noindent
The dependency of the rotation number $\rho(a,b)$ on $a$ and $b$ was 
investigated by 
\citet{DingHemmer},
\citet{bugeaudCR},
\citet{conze} 
and 
\citet{cout}.
We use and combine some of their ideas and develop them further.
There are large overlaps with the results of the references just mentioned;
we nevertheless give full proofs. 

In this  section, 
$\rho$ denotes an arbitrary real number. We do not assume that $\rho$ equals
the rotation number $\rho(a,b)=\rho(f_\pm)$ unless explicitly said so; on the
contrary, our aim is to let $\rho$ vary freely in order to eventually
derive conditions
for the equality $\rho=\rho(f_\pm)$.

We define, 
following \citet{cout},
for $\rho\in\bbR$ and $x\in\bbR$,
  \begin{equation}\label{phirho}
  \phi_\rho(x) = 
  \phi_{\rho,a,b}(x) := 
\frac{b}{1-a}+ (1-a)\sum_{j=0}^\infty a^j\floor{x - (j+1)\rho}.
  \end{equation}
The sum obviously converges absolutely, so each $\phi_\rho$ is a function
$\bbR\to\bbR$. 

It follows from \eqref{phirho} that
\begin{equation}
   \label{pr-1}
\phirho(x+1)=\phirho(x)+1,
\qquad x\in\bbR.
\end{equation}

We state some further simple properties of
the function $\phirho$.
\begin{lemma}\label{Lpr}
For any $\rho\in\bbR$, $\phi_\rho:\bbR\to\bbR$ has the following properties.
  \begin{romenumerate}
\item \label{pr-mono1}
$\phirho$ is weakly increasing: if $x\le y$, then $\phirho(x)\le\phirho(y)$. 

\item \label{pr-mono2}
If $\rho$ is irrational, then $\phi_\rho$ is strictly increasing, while
if $\rho$ is rational, with denominator $q$, then $\phi_\rho$ is constant on
each interval $[\frac{k}q,\frac{k+1}q)$.

\item \label{pr-disco}
The set of discontinuity points of $\phirho$ is 
\begin{equation}\label{drho}
\Drho:=\set{n + m\rho:m \in \bbZgto, n \in \bbZ},
\end{equation}
and $\phirho$ has a jump discontinuity at each $x\in\Drho$.
In particular, if $\rho$ is irrational, then the set of discontinuity points
is dense in $\bbR$.   

\item\label{pr-rc}
$\phirho(x)$ is right-continuous.

  \end{romenumerate}
\end{lemma}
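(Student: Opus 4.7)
The plan is to reduce every claim to properties of the individual summands $g_j(x):=\lfloor x-(j+1)\rho\rfloor$ in \eqref{phirho}, together with the fact that the series converges uniformly on compact subsets of $\bbR$. The uniform convergence is easy: $|g_j(x)|\le |x|+(j+1)|\rho|+1$, and $\sum_j(1-a)a^j\bigpar{|x|+(j+1)|\rho|+1}$ is uniformly bounded on any compact set.

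First I would dispose of \ref{pr-mono1} and \ref{pr-rc}. Each $g_j$ is weakly increasing and right-continuous (as the floor of a linear function of $x$), and the series has positive coefficients $(1-a)a^j$, so $\phirho$ inherits both properties (right-continuity via uniform convergence). Next, for \ref{pr-disco}, observe that $g_j$ jumps (by $+1$) exactly at the points $x=n+(j+1)\rho$, $n\in\bbZ$, and is otherwise continuous. Hence the union of all jump sets is precisely $\Drho=\set{n+m\rho:m\in\bbZgto,n\in\bbZ}$. For a fixed $x_0\in\Drho$, only finitely many (possibly one) summands jump at $x_0$; all remaining summands are continuous at $x_0$, and since the tail of the series is uniformly small, one shows $\phirho(x_0)-\phirho(x_0-)=\sum_{j:x_0=n+(j+1)\rho}(1-a)a^j>0$, so $\phirho$ really has a jump discontinuity at $x_0$. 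For $x_0\notin\Drho$, every $g_j$ is continuous at $x_0$, and uniform convergence on a compact neighbourhood gives continuity of $\phirho$ at $x_0$. When $\rho$ is irrational, $\set{m\rho\bmod 1:m\in\bbZgto}$ is dense in $\oio$ by Weyl/Kronecker, hence $\Drho$ is dense in $\bbR$.

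For \ref{pr-mono2}, I would split on whether $\rho$ is irrational or rational. If $\rho$ is irrational and $x<y$, the density of $\Drho$ yields a point $z\in(x,y]$ with a positive jump; combined with \ref{pr-mono1} and right-continuity this gives
\begin{equation}
\phirho(x)\le\phirho(z-)<\phirho(z)\le\phirho(y),
\end{equation}
so $\phirho(x)<\phirho(y)$. If $\rho=p/q$ in lowest terms, then for every $j$ the points where $g_j$ jumps, namely $n+(j+1)p/q$, lie in $\frac1q\bbZ$; hence every $g_j$, and therefore $\phirho$, is constant on each open interval $(k/q,(k+1)/q)$. Right-continuity (part \ref{pr-rc}) extends the constancy to the half-open intervals $[k/q,(k+1)/q)$.

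None of the steps looks genuinely hard; the one point that requires a little care is showing that the jump of $\phirho$ at $x_0\in\Drho$ is strictly positive in the rational case, where several summands may contribute jumps at the same point. Since all the individual jumps have the same positive sign, this is settled by just noting that a sum of positive terms is positive; so no real obstacle arises.
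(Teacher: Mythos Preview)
Your proposal is correct and follows essentially the same route as the paper: reduce each property to the corresponding property of the individual floor summands $g_j$, using positivity of the coefficients and uniform convergence on compacts. One small slip: at a point $x_0\in\Drho$ with $\rho$ rational, \emph{infinitely} many summands jump (if $x_0=n+m\rho$ then also $x_0=(n-p)+(m+q)\rho$, etc.), not finitely many; but as you yourself note in the last paragraph, all contributions are nonnegative and at least one is strictly positive, so the jump is positive regardless, and the paper handles this the same way (one term jumps, the rest of the series is weakly increasing).
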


\begin{proof}

\pfitemref{pr-mono1}
This is clear from \eqref{phirho}, because each
$\floor{x-(j+1)\rho}$ is weakly 
increasing and the coefficients in \eqref{phirho} are
positive. 

\pfitemref{pr-disco}
First, note that each discontinuity is a jump
discontinuity by \ref{pr-mono1}.

Let $y\in\Drho$, so $y=n+m\rho$ with $m\ge1$. 
Then, in the sum in \eqref{phirho}, 
 the term with $j=m-1$ has a positive jump at $x=y$. 
The sum of all other terms is a weakly increasing function of $x$, 
since each term is;
hence, the sum in \eqref{phirho} has a positive jump at $y$, and 
$\phirho(y)>\phirho(y-)$. 

Conversely, if $y\notin\Drho$, then every
term in the sum in \eqref{phirho}
is continuous at $x=y$.
Since the sum converges uniformly on bounded sets, it follows
that $\phirho(x)$ is continuous at $y$.

Finally, it is well known that if $\rho$ is irrational,
 then the sequence $(\frax{m\rho})_{m\ge1}$ is dense in $\oo$, and thus
$\Drho$ is dense in $\bbR$.

\pfitemref{pr-mono2}
 If $\rho=p/q$, 
and $x\in [\frac{k}q,\frac{k+1}q)$, 
then 
$\floor{x-(j+1)\rho} = \floor{\frac{k}q-(j+1)\rho}$ for every $j$
and thus
$\phi_\rho(x)=\phi_\rho(\frac{k}q)$.

On the other hand, if $\rho$ is irrational and $x<y$, then there exists
by \ref{pr-disco} a discontinuity point $z\in (x,y)$.
Hence $\phirho(x)\le\phirho(z-)<\phirho(z+)\le\phirho(y)$.

\pfitemref{pr-rc}
This follows because each
$\floor{x-(j+1)\rho}$ is right-continuous, and the sum in \eqref{phirho}
converges uniformly on compact intervals.
\end{proof}

In particular, it follows from \eqref{drho} that $0\in\Drho$ if and only if
$\rho$ is rational, and hence
\begin{equation}\label{jump}
  \begin{cases}
    \phirho(0)>\phirho(0-),& \text{if } \rho\in\bbQ,\\
    \phirho(0)=\phirho(0-),& \text{if } \rho\notin\bbQ.
  \end{cases}
\end{equation}

\begin{lemma}\label{LX}
Suppose that
\begin{equation}\label{good}
  \phirho(0-)\le 0\le\phirho(0).
\end{equation}
Then
  \begin{romenumerate}
\item \label{LXgood0}
If $\rho$ is irrational, or $\phirho(0-)<0$,
then, for all $x\in\bbR$,
\begin{align}\label{floorphirho}
  \floor{\phi_\rho(x)}&=\floor x, 
\\
  \frax{\phi_\rho(x)}&=\phi_\rho(\frax x), \label{fraxphirho} 
\intertext{and} 
\label{rhoF-}
  F_-(\phirho(x))&=\phirho(x+\rho), \\
  f_-(\frax{\phirho(x)})
&=\frax{\phirho(x+\rho)}
=\phirho(\frax{x+\rho}). 
\label{rhof-}
\end{align}

\item \label{LXgood1}
If $\rho$ is irrational, or $\phirho(0)>0$,
then, for all $x\in\bbR$,
\begin{equation}\label{ceilphirho}
  \ceil{\phi_\rho(x-)}=\ceil x, 
\end{equation}
and 
\begin{align}\label{rhoF+}
  F_+(\phirho(x-))&=\phirho((x+\rho)-),
\\
  f_+(\fraxx{\phirho(x-)})&=\fraxx{\phirho((x+\rho)-)}. 
\label{rhof+}
\end{align}
  \end{romenumerate}
\end{lemma}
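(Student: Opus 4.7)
The plan is to verify the two floor/ceiling identities \eqref{floorphirho} and \eqref{ceilphirho} first, since everything else then follows from the definition of $\phirho$ and from \refL{LF}. The semiconjugacy relations \eqref{rhoF-} and \eqref{rhoF+} will be pure algebra once we know how $\phirho$ interacts with $\lfloor\cdot\rfloor$.

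First I would reduce \eqref{floorphirho} to the claim that $\phirho$ maps $\oio$ into $\oio$. Indeed, \eqref{pr-1} gives $\phirho(x+n)=\phirho(x)+n$, so it suffices to treat $x\in\oio$, where we must show $0\le\phirho(x)<1$. The lower bound is immediate from weak monotonicity (\refL{Lpr}\ref{pr-mono1}) and the hypothesis $\phirho(0)\ge 0$. For the upper bound, combine monotonicity with $\phirho(1-)=\phirho(0-)+1\le 1$ (by \eqref{pr-1} and $\phirho(0-)\le 0$). Strict inequality $\phirho(x)<1$ is the only delicate point: if $\phirho(0-)<0$ then $\phirho(1-)<1$ does the job, while if $\rho$ is irrational then \eqref{jump} forces $\phirho(0)=\phirho(0-)=0$, so $\phirho(1)=1$ and strict monotonicity (\refL{Lpr}\ref{pr-mono2}) gives $\phirho(x)<\phirho(1)=1$ for $x<1$. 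Then \eqref{fraxphirho} is a direct consequence of \eqref{floorphirho} and \eqref{pr-1}: $\{\phirho(x)\}=\phirho(x)-\lfloor x\rfloor=\phirho(\{x\})$.

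The identity \eqref{rhoF-} is then a telescoping computation. Writing out
\begin{equation*}
\phirho(x+\rho)=\frac{b}{1-a}+(1-a)\lfloor x\rfloor+(1-a)\sum_{j=1}^{\infty}a^{j}\lfloor x-j\rho\rfloor
\end{equation*}
and comparing with
\begin{equation*}
F_{-}(\phirho(x))=a\phirho(x)+b+(1-a)\lfloor\phirho(x)\rfloor,
\end{equation*}
where $\lfloor\phirho(x)\rfloor=\lfloor x\rfloor$ by \eqref{floorphirho}, everything matches once one observes the trivial identity $b+ab/(1-a)=b/(1-a)$. Finally \eqref{rhof-} is obtained by taking fractional parts and using $\pis_-\circ F_-=f_-\circ\pis_-$ from \refL{LF}\ref{LFpi}, together with \eqref{fraxphirho}.

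Part \ref{LXgood1} is parallel. The key input is that $\psix(x):=\phirho(x-)$ maps $\ooi$ into $\ooi$; the upper bound $\psix(x)\le 1$ again uses $\phirho(0-)\le 0$, while the strict lower bound $\psix(x)>0$ comes from $\phirho(0)>0$ in the rational case and from strict monotonicity in the irrational case (using any $y_0\in(0,x)$ gives $\psix(x)\ge\phirho(y_0)>\phirho(0)=0$). From this one obtains \eqref{ceilphirho} by the same periodicity argument. The functional equation \eqref{rhoF+} is proved by applying left limits term by term in \eqref{phirho} — the identity $\lfloor y-\rfloor=\lceil y\rceil-1$ lets us replace floors by ceilings — and then the same telescoping as before, this time using the formula $F_{+}(y)=ay+b+(1-a)(\lceil y\rceil-1)$ equivalent to \eqref{F+}. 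Finally \eqref{rhof+} drops out of \refL{LF}\ref{LFpi} for $F_+$. The main obstacle throughout is the bookkeeping at the endpoints: tracking exactly when strict versus weak inequalities are needed and how the rational/irrational dichotomy forces the right one through \eqref{jump} and \refL{Lpr}\ref{pr-mono2}; once that is in place, both parts are straightforward.
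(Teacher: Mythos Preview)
Your proposal is correct and follows essentially the same approach as the paper's proof. The paper likewise reduces \eqref{floorphirho} to the statement that $\phirho$ maps $\oio$ into $\oio$ (arguing via $x<0\implies\phirho(x)<0$ and then translating by periodicity, whereas you bound $\phirho(x)<1$ directly via $\phirho(1-)$; these are equivalent), and then carries out the same telescoping computation for \eqref{rhoF-}, with part \ref{LXgood1} handled symmetrically.
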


Note that \eqref{jump} shows that if 
\eqref{good} holds, then at least one
of \ref{LXgood0} and \ref{LXgood1} applies. Furthermore, if $\rho$ is
irrational, then \eqref{good} holds if and only if $\phirho(0)=0$.

\begin{proof}

\pfitemref{LXgood0}
By  monotonicity and \eqref{good}, if $x\ge0$, then
$\phirho(x)\ge\phirho(0)\ge0$. 
Similarly, if $x<0$ and $\phirho(0-)<0$, then
$\phirho(x)\le\phirho(0-)<0$. 
Furthermore, if $x<0$ and $\rho\notin\bbQ$,
then $\phirho(x)<\phirho(x/2)\le\phirho(0-)\le0$ by \refL{Lpr}\ref{pr-mono2}
and \eqref{good}. Hence, in both cases, $x<0\implies \phirho(x)<0$, and it
follows from \eqref{pr-1} that
$x<1\implies \phirho(x)<1$. Consequently, $x\in\oio \implies \phirho(x)\in\oio$,
which yields \eqref{floorphirho} and \eqref{fraxphirho} by \eqref{pr-1}.

Moreover, by \eqref{F-}, \eqref{floorphirho} and  \eqref{phirho},
\begin{equation}\label{didiF-}
  \begin{split}
F_-(\phirho(x))&
=a\phirho(x)+b+(1-a)\floor{\phirho(x)}	
=a\phirho(x)+b+(1-a)\floor{x}	
\\&
= \frac{ab}{1-a}+(1-a)\sum_{j=0}^\infty a^{j+1}\floor{x - (j+1)\rho}
+b
+(1-a)\floor{x}	
\\&
=\frac{b}{1-a}+(1-a) \sum_{k=0}^\infty a^{k}\floor{x - k\rho}
=\phirho(x+\rho).
  \end{split}
\raisetag{1.5\baselineskip}
\end{equation}
Finally, \eqref{rhof-} follows from \eqref{didiF-} by \refL{LF} and
\eqref{fraxphirho}. 

\pfitemref{LXgood1}
In this case we similarly see that
$x\in\ooi \implies \phirho(x-)\in\ooi$,
and \eqref{ceilphirho} follows by \eqref{pr-1}.
Then, \eqref{rhoF+} follows as in \eqref{didiF-}.
(By \eqref{pr-1}, it suffices to consider  $x\in\ooi$.)
Finally, \refL{LF} yields \eqref{rhof+}.
\end{proof}

Let, for $\rho\in\bbR$,
\begin{equation}\label{psi}
  \begin{split}
  \psi(\rho):=\phirho(0)
&
=\frac{b}{1-a}+ (1-a)\sum_{j=0}^\infty a^j\floor{ - (j+1)\rho}
\\&
=\frac{b}{1-a}- (1-a)\sum_{j=0}^\infty a^j\ceil{(j+1)\rho}.
  \end{split}
\end{equation}

\begin{lemma}\label{Lpsi}
  \begin{thmenumerate}
  \item \label{Lpsi-lc}
$\psi(\rho)$ is left-continuous and strictly decreasing.
  \item \label{Lpsi-disco}
$\psi(\rho)$ is continuous at every irrational $\rho$ and has a jump at
	every rational $\rho$. 
  \item \label{Lpsi-}
The right limits are given by
\begin{equation}\label{psi+}
  \begin{split}
  \psi(\rho+)=\phirho(0-)
&
=\frac{b}{1-a}-1- (1-a)\sum_{j=0}^\infty a^j\floor{(j+1)\rho}.	
  \end{split}
\end{equation}
\item \label{Lpsi-01}
$\psi(0)\ge0$ and $\psi(1)<0$. Furthermore,  $\psi(0+)>0 \iff a+b>1$.
  \end{thmenumerate}
  \end{lemma}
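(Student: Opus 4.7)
My plan is to deduce all four parts directly from the series expression \eqref{psi}, exploiting the uniform convergence of the series on compact subsets of $\bbR$ (the $j$-th term is bounded by $a^j((j+1)|\rho|+1)$) to interchange limits with the summation.

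For parts \ref{Lpsi-lc} and \ref{Lpsi-disco} I would use that $y\mapsto\floor y$ is weakly increasing and right-continuous, with $\lim_{y\uparrow y_0}\floor y=\ceil{y_0}-1$ for every real $y_0$. Since $\rho\mapsto -(j+1)\rho$ is strictly decreasing and continuous, each term in \eqref{psi} is weakly decreasing and left-continuous in $\rho$; summing preserves both properties and gives the corresponding statements for $\psi$. For strict monotonicity, given $\rho_1<\rho_2$, I would pick $j$ large enough that $(j+1)(\rho_2-\rho_1)>1$; then the interval $(-(j+1)\rho_2,-(j+1)\rho_1]$ contains an integer, forcing $\floor{-(j+1)\rho_1}\ge\floor{-(j+1)\rho_2}+1$, while every other term has only weakly decreased, so $\psi(\rho_1)>\psi(\rho_2)$. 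The dichotomy \ref{Lpsi-disco} follows by inspecting when any term can be discontinuous: precisely when $(j+1)\rho_0\in\bbZ$ for some $j\ge0$, i.e., when $\rho_0\in\bbQ$; continuity at irrational $\rho_0$ then results from termwise continuity plus uniform convergence, while at rational $\rho_0$ the terms with $q\mid(j+1)$ each contribute a nonzero jump which combined with left-continuity produces a genuine jump of $\psi$.

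For part \ref{Lpsi-} I would compute the right limit termwise: as $\rho\downarrow\rho_0$, $-(j+1)\rho\uparrow -(j+1)\rho_0$, so
\begin{equation*}
\lim_{\rho\downarrow\rho_0}\floor{-(j+1)\rho}=\ceil{-(j+1)\rho_0}-1=-\floor{(j+1)\rho_0}-1.
\end{equation*}
Summing and using $(1-a)\sum_{j\ge0}a^j=1$ yields the stated formula \eqref{psi+}. To identify the right limit with $\phirho(0-)$, I would apply the same one-sided limit argument to the series \eqref{phirho} as $x\uparrow 0$: each term produces the same expression $\ceil{-(j+1)\rho_0}-1$, so $\psi(\rho_0+)=\phirho(0-)$. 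Part \ref{Lpsi-01} is then immediate by direct evaluation: $\psi(0)=b/(1-a)\ge0$; at $\rho=1$ the series collapses via $(1-a)\sum_{j\ge0}(j+1)a^j=1/(1-a)$ to $\psi(1)=(b-1)/(1-a)<0$; and \eqref{psi+} at $\rho_0=0$ gives $\psi(0+)=b/(1-a)-1=(a+b-1)/(1-a)$, which is positive iff $a+b>1$.

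The only real subtlety is careful bookkeeping of one-sided limits of $\floor\cdot$ under the sign reversal, together with the identity $\ceil{-x}=-\floor x$; once these are in place the remaining work reduces to summation of the standard geometric and arithmetico-geometric series.
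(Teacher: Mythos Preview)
Your proposal is correct and follows essentially the same approach as the paper's own proof: termwise monotonicity and one-sided continuity of the series \eqref{psi}, combined with locally uniform convergence to pass limits inside the sum, then direct evaluation at $\rho=0,1$ and of $\psi(0+)$. The only cosmetic difference is that the paper works with the $\ceil{(j+1)\rho}$ form of \eqref{psi} for parts \ref{Lpsi-lc}--\ref{Lpsi-disco} and establishes \eqref{psi+} via the single identity $\lim_{\rho'\downarrow\rho}\phi_{\rho'}(x)=\phi_\rho(x-)$ specialized at $x=0$, whereas you use the $\floor{-(j+1)\rho}$ form throughout and verify $\psi(\rho_0+)=\phi_{\rho_0}(0-)$ by computing both sides separately; the substance is the same.
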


\begin{proof}
 \pfitemref{Lpsi-lc} 
The left-continuity follows from \eqref{psi}, since each $\ceil{(j+1)\rho}$
is left-continuous, and the sum converges uniformly on bounded domains.

That $\psi(\rho)$ is weakly
decreasing follows also from \eqref{psi}. Furthermore,
if $\rho_1<\rho_2$,
then there exist $j$ such that $(j+1)(\rho_2-\rho_1)>1$ and then
$\ceil{(j+1)\rho_1}<\ceil{(j+1)\rho_2}$; hence $\psi(\rho_1)>\psi(\rho_2)$.
Thus $\psi$ is strictly decreasing.

\pfitemref{Lpsi-disco}
If $\rho$ is irrational, then every $\ceil{(j+1)\rho}$ is continuous at
$\rho$, and thus \eqref{psi} implies that $\psi$ is continuous at $\rho$,
again using the fact that the sum converges uniformly on bounded domains.

Conversely, if $\rho$ is rational, then $(j+1)\rho\in\bbZ$ for some $j$, and
then $\ceil{(j+1)\rho}$ has a jump at $\rho$. (There will be infinitely many
such $j$, but all jumps are in the same direction, so there is no cancellation.)

\pfitemref{Lpsi-}
For any $x,\rho\in\bbR$ and $j\ge0$, 
\begin{equation}
  \lim_{\rho'\downto\rho}\floor{x-(j+1)\rho'}
=
  \lim_{x'\upto x}\floor{x'-(j+1)\rho}.
\end{equation}
Hence, \eqref{phirho} yields, using local uniform convergence of the sums again,
\begin{equation}
\phi_{\rho+}(x):=  \lim_{\rho'\downto\rho}\phi_{\rho'}(x)
=
  \lim_{x'\upto x}\phi_\rho(x')=\phirho(x-).
\end{equation}
Now take $x=0$ 
to obtain $\psi(\rho+)=\phi_{\rho+}(0) =\phi_\rho(0-)$.
Finally, use \eqref{psi} and $\ceil{y+}=\floor{y}+1$.

\pfitemref{Lpsi-01}
Simple calculations using \eqref{psi} and \eqref{psi+} yield
\begin{align}
 \psi(0)&=\frac{b}{1-a}, \label{psi0}\\
\psi(1)&=\frac{b}{1-a}-\frac{1}{1-a}=-\frac{1-b}{1-a},\label{psi1}\\
 \psi(0+)&=\frac{b}{1-a}-1
=\frac{a+b-1}{1-a}.\label{psi0+}
\end{align}
\end{proof}

By \eqref{psi} and \eqref{psi+}, \eqref{good} is equivalent to
\begin{equation}
  \label{goodpsi}
\psi(\rho+)\le0\le\psi(\rho).
\end{equation}

\begin{lemma}
  \label{LY}
Let $\rho\in\bbR$. Then $\rho$ equals the rotation number 
$\rho(f_\pm)=\rho(a,b)$ 
of $f_\pm$ if and only
if \eqref{goodpsi} holds (or, equivalently, \eqref{good} holds).
\end{lemma}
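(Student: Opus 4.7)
\textbf{Proof plan for \refL{LY}.} The strategy is to show that the condition \eqref{goodpsi} pins down $\rho$ uniquely, and that the unique $\rho$ it selects must coincide with the rotation number. By \refL{Lpsi}, $\psi$ is strictly decreasing and left-continuous, with $\psi(0)\ge 0 > \psi(1)$. Strict monotonicity together with left-continuity (and the fact that the only discontinuities are jumps at rationals) imply that there is exactly one $\rho^*\in[0,1)$ with $\psi(\rho^*+)\le 0\le \psi(\rho^*)$. So it is enough to prove that $\rho=\rho^*\implies \rho=\rho(f_\pm)$; then both directions of the lemma follow from uniqueness.

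Next, I would assume $\rho$ satisfies \eqref{good} and produce a semi-conjugacy of the lift to a translation. By the remark just after \refL{LX}, at least one of cases \ref{LXgood0} or \ref{LXgood1} of \refL{LX} applies. Suppose first that \ref{LXgood0} applies; then \eqref{rhoF-} holds for every $x\in\bbR$, and a trivial induction yields
\begin{equation*}
F_-^n(\phirho(x))=\phirho(x+n\rho),\qquad x\in\bbR,\ n\ge 0.
\end{equation*}
From the definition \eqref{phirho}, the identities $(1-a)\sumjo a^j=1$ and $(1-a)\sumjo a^j(j+1)=1/(1-a)$ give
\begin{equation*}
\phirho(x)=x+\frac{b-\rho}{1-a}-(1-a)\sum_{j=0}^\infty a^j\frax{x-(j+1)\rho},
\end{equation*}
and the last sum lies in $[0,1]$. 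Hence $\phirho(x)=x+O_\rho(1)$ uniformly in $x$.

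Combining these facts with \eqref{lrhoO} applied to $y=\phirho(x)$,
\begin{equation*}
x+n\rho+O(1)=\phirho(x+n\rho)=F_-^n(\phirho(x))=\phirho(x)+n\rho(f_\pm)+O(1)=x+n\rho(f_\pm)+O(1),
\end{equation*}
uniformly in $n$. This forces $n(\rho-\rho(f_\pm))=O(1)$, so $\rho=\rho(f_\pm)$. If instead only \refL{LX}\ref{LXgood1} applies (the only remaining case, by \eqref{jump}), the same argument runs verbatim with $F_+$ and the left-limit $\phirho(x-)$ replacing $F_-$ and $\phirho(x)$, since $\phirho(x-)$ satisfies the same $x+O(1)$ estimate and \eqref{rhoF+} gives the analogous iteration.

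The argument has no deep obstacle: almost all of the content was already packaged into the conjugacy identities of \refL{LX} and into the properties of $\psi$ in \refL{Lpsi}. The only point requiring minor care is the rational case where $\phirho$ has a jump at $0$; there, exactly one of \refL{LX}\ref{LXgood0} and \ref{LXgood1} applies, but the proof is symmetric in $(F_-,\phirho)$ versus $(F_+,\phirho(\cdot-))$, so this causes no real complication.
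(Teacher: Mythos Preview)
Your proposal is correct and follows essentially the same route as the paper. Both arguments (i) establish that \eqref{goodpsi} singles out a unique $\rho^*$ via the monotonicity and continuity properties of $\psi$ from \refL{Lpsi}, and (ii) show that any $\rho$ satisfying \eqref{good} must be the rotation number by iterating the conjugacy \eqref{rhoF-} (or \eqref{rhoF+}) from \refL{LX} and using $\phirho(x)=x+O(1)$. The only cosmetic difference is that the paper reads off $|\phirho(x)-x|<1$ from \eqref{floorphirho}, whereas you derive the $x+O(1)$ estimate directly from the series \eqref{phirho}; and the paper identifies $\rho$ with the rotation number via the limit $F_-^n(\phirho(0))/n\to\rho$, while you invoke \eqref{lrhoO} explicitly. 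One small inaccuracy: in the rational case it is not true that \emph{exactly} one of \refL{LX}\ref{LXgood0}, \ref{LXgood1} applies (both apply when $\phirho(0-)<0<\phirho(0)$); but you only need that at least one applies, which is what the paper uses too.
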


\begin{proof}
Suppose first that \eqref{goodpsi} holds, and thus also \eqref{good}. 
As noted above, then
\refL{LX}\ref{LXgood0} or \ref{LXgood1} applies.
If \refL{LX}\ref{LXgood0} applies, then \eqref{floorphirho} implies
$|\phirho(x)-x|<1$, and thus by iterating \eqref{rhoF-},
\begin{equation}
  F_-^n(\phirho(0))=\phirho(n\rho)=n\rho+O(1), \qquad n\ge0;
\end{equation}
hence $F_-^n(\phirho(0))/n\to\rho$ as \ntoo, 
and thus the rotation number $\rho(f_\pm)=\rho$.

A similar argument works if \refL{LX}\ref{LXgood1} applies.

For the converse, let 
\begin{equation}\label{rhox}
  \rhox:=\sup\set{\rho:\psi(\rho)\ge0}.
\end{equation}
\refL{Lpsi} implies that $\rhox$ is well-defined, with $0\le\rhox\le1$;
furthermore, the left-continuity of $\psi$ implies 
$\psi(\rhox)\ge0$, so the supremum in \eqref{rhox} is attained (and is thus
a maximum). Furthermore, by \eqref{rhox}, $\psi(\rho)<0$ for $\rho>\rhox$, and
thus $\psi(\rhox+)\le0$.

Hence, $\psi(\rhox+)\le0\le\psi(\rhox)$, 
\ie{}
\eqref{goodpsi} holds for $\rho=\rhox$; 
as shown above this implies that
$\rhox$ equals the rotation number $\rho(f_\pm)$.
Consequently, \eqref{goodpsi} holds when $\rho=\rho(f_\pm)$.
\end{proof}

The rotation number $\rho(f_\pm)=\rho(a,b)$ depends on $a$ and $b$ in a
rather complicated way. 
Similarly, the function $\psi(\rho)$ depends on $a$ and $\rho$ in rather
complicated 
ways, but its dependency on $b$ is simple.

We define
\begin{align}\label{b-}
b_-(a, \rho) 
&= (1-a)^2 \sum_{j=0}^\infty a^j \ceil{(j+1)\rho},
\\
b_+(a, \rho)& 
=1-a+(1-a)^2 \sum_{j=0}^\infty a^j\floor{(j+1)\rho}.
\label{b+}
\end{align}
Then, by \eqref{psi} and \eqref{psi+},
\begin{align}
(1-a)  \psi(\rho) &= b-b_-(a,\rho),\label{wb-}
\\
(1-a)  \psi(\rho+) &= b-b_+(a,\rho).\label{wb+}
\end{align}
Note that $b_-(a,\rho)\le b_+(a,\rho)$, with equality if and only if $\rho$
is irrational, as is easily seen directly from \eqref{b-}--\eqref{b+}, or by 
\eqref{wb-}--\eqref{wb+} and \refL{Lpsi}\ref{Lpsi-disco}.
Furthermore, $b_-(a,\rho)$ and $b_+(a,\rho)$ are 
strictly increasing functions of $\rho$, and $b_+(a,\rho)=b_-(a,\rho+)$.

By \eqref{wb-} and \eqref{wb+},
\begin{align}
  \psi(\rho)\ge0 & \iff b \ge b_-(a,\rho),\label{ele+}
\\
  \psi(\rho+)\le0 & \iff b \le b_+(a,\rho),\label{ele0}
\end{align}

We can now rephrase and expand
\refL{LY}, regarding $a$ and $\rho$ as given and $b$
as varying.
This yields the following theorem, essentially due to \citet{bugeaudCR} 
(in a different form, see \refR{RBC} below), see also \citet{conze} and
\citet{DingHemmer}. 

\begin{theorem}\label{TLb}
Fix $a\in(0,1)$ and $\rho\in[0,1)$.
Then $0\le b_-(a,\rho)\le b_+(a,\rho)<1$.
Moreover, the rotation number $\rho(a,b)$ 
of $f_\pm$ equals $\rho$ if and only if
\begin{equation}\label{bbb}
b_-(a,\rho)\le b\le b_+(a,\rho).   
\end{equation}
Furthermore,
  \begin{romenumerate}
  \item \label{TLbi}
If $\rho\notin\bbQ$, then $b_-(a,\rho)=b_+(a,\rho)$. 
Hence there is a unique value of $b$
such that the rotation number $\rho(a,b)$  equals $\rho$. 
\item \label{TLbr<}
If $\rho\in\bbQ$, then $b_-(a,\rho)<b_+(a,\rho)$. 
Hence, there is an interval $I_{a,\rho}:=
[b_-(a,\rho), b_+(a,\rho)]$ of $b$ that give the same rotation number $\rho$
of $f_\pm$. 
If $\rho$ has denominator $q$ (in lowest terms), then $I_{a,\rho}$ has length 
\begin{equation}\label{tlbr<}
\leb{I_{a,\rho}}=
b_+(a,\rho)-b_-(a,\rho)
=
a^{q-1}(1-a)^2/(1-a^q).  
\end{equation}
  \end{romenumerate}
\end{theorem}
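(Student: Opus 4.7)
The plan is to derive the theorem as an almost mechanical consequence of Lemmas~\ref{LY} and~\ref{Lpsi}, interpreting them through the affine relations \eqref{wb-}--\eqref{wb+} between $\psi$ and the functions $b_\pm(a,\rho)$. The core observation is that condition \eqref{goodpsi} becomes, via \eqref{ele+}--\eqref{ele0}, exactly the condition \eqref{bbb}; combined with Lemma~\ref{LY} this immediately yields the main characterization $\rho(a,b)=\rho\iff b_-(a,\rho)\le b\le b_+(a,\rho)$. All remaining assertions are either inequalities between $b_-$ and $b_+$ or the explicit length formula in \ref{TLbr<}.

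For the bounds $0\le b_-(a,\rho)\le b_+(a,\rho)<1$: the first inequality is immediate from \eqref{b-} since every ceiling is non-negative; the middle inequality follows from $\psi(\rho)\ge\psi(\rho+)$ (Lemma~\ref{Lpsi}\ref{Lpsi-lc}, using that $\psi$ is decreasing) together with \eqref{wb-}--\eqref{wb+}. For the strict inequality $b_+(a,\rho)<1$, I would first check that $b_-(a,\cdot)$ is strictly increasing (\cf~Lemma~\ref{Lpsi}\ref{Lpsi-lc}) and compute directly from \eqref{b-} that $b_-(a,1)=(1-a)^2\sum_{j=0}^\infty a^j(j+1)=1$. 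Then, picking any $\rho'\in(\rho,1)$ and observing $b_+(a,\rho)=b_-(a,\rho+)\le b_-(a,\rho')<b_-(a,1)=1$ closes the argument; the identity $b_-(a,\rho+)=b_+(a,\rho)$ is verified from \eqref{b-}--\eqref{b+} by noting $\ceil{x+}=\floor{x}+1$. Part \ref{TLbi} then follows from continuity of $\psi$ at irrational $\rho$ (Lemma~\ref{Lpsi}\ref{Lpsi-disco}): $\psi(\rho+)=\psi(\rho)$ forces $b_-(a,\rho)=b_+(a,\rho)$, so only one value of $b$ can give rotation number $\rho$. The strict inequality in \ref{TLbr<} follows symmetrically from the jump of $\psi$ at rational $\rho$.

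For the length formula in \ref{TLbr<}, I would use $b_+(a,\rho)-b_-(a,\rho)=(1-a)\bigpar{\psi(\rho)-\psi(\rho+)}$ from \eqref{wb-}--\eqref{wb+}, together with \eqref{psi} and \eqref{psi+}, to write
\begin{equation*}
\psi(\rho)-\psi(\rho+)=1+(1-a)\sum_{j=0}^\infty a^j\bigpar{\floor{(j+1)\rho}-\ceil{(j+1)\rho}}.
\end{equation*}
The summand equals $0$ when $(j+1)\rho\in\bbZ$ and $-1$ otherwise; for $\rho=p/q$ in lowest terms, the integer condition reads $j\equiv q-1\pmod q$. Splitting off this residue class as a geometric series in $a^q$ and collecting gives
\begin{equation*}
\psi(\rho)-\psi(\rho+)=1-(1-a)\Bigpar{\tfrac{1}{1-a}-\tfrac{a^{q-1}}{1-a^q}}=\tfrac{(1-a)a^{q-1}}{1-a^q},
\end{equation*}
from which \eqref{tlbr<} follows after multiplying by $(1-a)$.

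The main obstacle, such as it is, lies in this last computation: identifying the correct residue class modulo $q$ on which $(j+1)\rho$ is an integer and cleanly evaluating the resulting geometric sums. Everything else is a direct translation of the preceding lemmas into statements about $b$, made possible by the fact that $\psi$ depends on $b$ only through the additive constant $b/(1-a)$, so that all $b$-independent structure of $\psi$ is encoded in the two functions $b_\pm(a,\rho)$.
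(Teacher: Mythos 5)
Your proposal is correct and follows essentially the same route as the paper: Lemma~\ref{LY} combined with \eqref{ele+}--\eqref{ele0} for the main equivalence, the values $b_-(a,0)=0$, $b_-(a,1)=1$ together with strict monotonicity and $b_+(a,\rho)=b_-(a,\rho+)$ for the bounds, the jump/continuity of $\psi$ at rational/irrational $\rho$ for \ref{TLbi} and the strict inequality in \ref{TLbr<}, and the geometric series over $j\equiv q-1\pmod q$ for \eqref{tlbr<}. The only (immaterial) difference is that you phrase some steps in terms of $\psi(\rho)-\psi(\rho+)$ where the paper computes directly with $b_\pm$ from \eqref{b-}--\eqref{b+}; these are equivalent via the affine relations \eqref{wb-}--\eqref{wb+}.
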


\begin{proof}
First, 
by \eqref{b-}, $b_-(a,0)=0$  and $b_-(a,1)=1$. Hence, $0\le\rho<1$  implies
$b_-(a,\rho)\ge0$ and $b_+(a,\rho)=b_-(a,\rho+)<1$. 

By \refL{LY}, $\rho=\rho(f_\pm)$ 
if and only if \eqref{goodpsi}
holds, which
by \eqref{ele+}--\eqref{ele0} is equivalent to \eqref{bbb}.

We have already remarked that $b_-(a,\rho)=b_+(a,\rho)$ if and only if
$\rho\notin\bbQ$. Hence it only remains to calculate $\leb{I_{a,\rho}}$.
We have, by \eqref{b-}--\eqref{b+},
\begin{equation}
  \begin{split}
b_+(a,\rho)-b_-(a,\rho)
=
(1-a)^2\sumjo a^j\Bigpar{1+\floor{(j+1)\rho}-\ceil{(j+1)\rho}}.
  \end{split}
\end{equation}
The big bracket in this sum is 0 or 1, and 1 if and only if $(j+1)\rho\in\bbZ$.
If $\rho=p/q$, this happens when $j=kq-1$ with $k\ge1$; hence
\begin{equation}
  \begin{split}
b_+(a,\rho)-b_-(a,\rho)
=
(1-a)^2\sumk a^{kq-1}=
(1-a)^2\frac{a^{q-1}}{1-a^q}.
  \end{split}
\end{equation}
\end{proof}

As remarked by \citet{DingHemmer} and \citet{conze},
it follows from \cite[Theorem 309]{HardyW} that
for any $a\in\ooio$,  
the sum of the lengths $\leb{I_{a,\rho}}$ for all rational $\rho\in\oio$ is,
considering only $p/q$ in lowest terms and letting $\gf$ be the Euler
totient function,
\begin{equation}\label{sum=1}
  \begin{split}
\biggabs{\bigcup_{\rho\in \bbQ\cap\oio} I_{a,\rho} }  
&=
\sum_{\rho\in \bbQ\cap\oio}\leb{ I_{a,\rho} }  
=
(1-a)^2 \sum_{p/q\in \bbQ\cap\oio}\frac{a^{q-1}}{1-a^q}
\\&
=
(1-a)^2 \sum_{q=1}^\infty \gf(q)\frac{a^{q-1}}{1-a^q}
=1
  \end{split}
\end{equation}
and hence for any fixed $a$, the rotation number is rational for almost
every $b\in\oio$. Furthermore, the exceptional set of $b$ has Hausdorff
dimension 0, see \cite{laurent-nogueira} and \refT{THb} below. 

\begin{remark}\label{RBC}
As simple consequences of \eqref{b-}--\eqref{b+}, we also have
\begin{align}\label{b-2}
b_-(a,\rho)&=(1-a)\sum_{j=0}^\infty a^j\bigpar{\ceil{(j+1)\rho}-\ceil{j\rho}}
\\
b_+(a,\rho)&=(1-a)
\Bigpar{1+\sum_{j=0}^\infty a^j\bigpar{\floor{(j+1)\rho}-\floor{j\rho}}}.
\label{b+2}
\end{align}
This shows that $b_-(a,\rho)$ and $b_+(a,\rho)$ coincide with the functions
defined (for the same purpose) by
\citet{bugeaudCR} and \citet{conze,BC-noise}.  
In their notation, our $b_-(a,\rho)$ is written $\tau_a(\rho)$ when $\rho$
is irrational, and $P^p_q(a)/(1+a+\dots+a^{q-1})$ when $\rho=p/q$ is rational;
$P^p_q(a)$ is a polynomial, and these polynomials are studied further in 
\cite{bugeaudCR,conze,BC-noise}.
\end{remark}

\begin{example}\label{Erho=1/2}
  For $\rho=1/2$, \eqref{b-} yields
  \begin{equation}
    b_-(a,\tfrac12)=(1-a)^2\sum_{k=0}^\infty\bigpar{a^{2k}+a^{2k+1}}\bigpar{k+1}
=(1-a)^2\frac{1+a}{(1-a^2)^2}
=\frac1{1+a}
  \end{equation}
and then \eqref{tlbr<} yields
\begin{equation}
  b_+(a,\tfrac12) =   b_-(a,\tfrac12)+\frac{a(1-a)^2}{1-a^2}
=\frac{1+a-a^2}{1+a}.
\end{equation}
Consequently, 
\begin{equation}\label{rho=1/2}
\rho(f_\pm)=\frac12
\iff
\frac{1}{1+a}\le b\le \frac{1+a-a^2}{1+a}.
\end{equation}
\end{example}

\section{Hausdorff dimension}\label{Shaus}\noindent
We use the results above to 
prove three theorems about the Hausdorff dimension of important sets.
The first two concern the exceptional set of parameters 
for which the  rotation number is irrational, 
and thus the invariant set of $f_\pm$ is a Cantor set;
in the third
theorem we study the invariant set itself. 

As said after \refT{TLb}, 
\citet{conze} showed that for any fixed $a$, 
the  exceptional  set of  $b$ 
that yield an irrational rotation number $\rho(a,b)$
has Lebesgue measure 0;
moreover, 
\citet[Theorem 2]{laurent-nogueira}
show the sharper result that this exceptional set has
Hausdorff dimension 0.
See also \cite{DingHemmer}.
We supply gauge functions 
that provide even finer information,
including both an upper and a lower bound on the \lq{size}' of the
exceptional set. 
Furthermore, we consider in \refT{THab}
the Hausdorff dimension of the two-dimensional parameter set
$(a,b)$ that yield irrational rotation numbers. 

Let $\cE$ be the exceptional set of all $(a,b)\in\ooio\times\oio$ such that
$f_{\pm,a,b}$ has irrational rotation number;
furthermore, for $a\in\ooio$, 
let $\cE_a$ be the set of $b\in\oio$ such that $(a,b)\in\cE$.
\begin{theorem}
  \label{THb}
For every $a\in\ooio$, the Hausdorff dimension of $\cE_a$ is $0$.
Moreover, the Hausdorff measure $\cH_h(\cE_a)<\infty$ 
for the gauge function $h(t)=1/|\log t|^2$,
but $\cH_h(\cE_a)>0$ 
for the gauge function $h(t)=1/|\log t|$,
\end{theorem}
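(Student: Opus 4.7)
The plan is to combine explicit covers of $\cE_a$ coming from \refT{TLb} with a mass-distribution argument based on pushing Lebesgue measure back through $b\mapsto\rho(a,b)$. Recall from \refT{TLb} and \eqref{sum=1} that $[0,1)\setminus\cE_a$ is a disjoint union of the closed intervals $I_{a,p/q}$, one for each reduced fraction $p/q\in(0,1)\cap\bbQ$, of length $\ell_q:=a^{q-1}(1-a)^2/(1-a^q)$, with $\sum_{q\ge 1}\gf(q)\ell_q=1$.

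\emph{Upper bound and dimension.} For each $Q\ge 1$ I would cover $\cE_a$ by the maximal gaps of $[0,1)\setminus\bigcup_{q\le Q,\,\gcd(p,q)=1}I_{a,p/q}$; there are $N_Q\le 1+\sum_{q\le Q}\gf(q)=O(Q^2)$ such gaps, and their total length equals the mass of the removed tail,
\[
T_Q=\sum_{q>Q}\gf(q)\ell_q\le (1-a)\sum_{q>Q}q\,a^{q-1}=O(Q a^Q).
\]
Once $Q$ is so large that $T_Q<e^{-3}$, the gauge $h(t)=1/|\log t|^2$ is concave on $(0,T_Q]$ (direct computation of $h''$), so Jensen's inequality applied to the gap-lengths gives $\sum_I h(|I|)\le N_Q\,h(T_Q/N_Q)$. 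Since $T_Q/N_Q=O(a^Q/Q)$, one has $|\log(T_Q/N_Q)|\ge c\,Q|\log a|$, and the bound is $O(Q^2/(Q|\log a|)^2)=O(1/|\log a|^2)$, uniformly in $Q$. Because $\max_I|I|\le T_Q\to 0$, these are admissible $\delta$-covers for every $\delta>0$, giving $\cH_h(\cE_a)\le C/|\log a|^2<\infty$. The Hausdorff dimension statement then follows: for any $s>0$, write $t^s=(t^s|\log t|^2)\,h(t)$; since $t^s|\log t|^2\to 0$ as $t\to 0^+$, the same covers force $\cH^s(\cE_a)=0$.

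\emph{Lower bound.} Define a measure $\mu$ on $[0,1)$ via $\mu([0,b])=\rho(a,b)$. The map $b\mapsto\rho(a,b)$ is non-decreasing, constant on each $I_{a,p/q}$, and by \refT{TLb} its range on $[0,1)$ is all of $[0,1)$ (using $b_-(a,\rho)\uparrow 1$ as $\rho\uparrow 1$, from \eqref{b-}); hence it is continuous, and $\mu$ is an atom-free probability measure with $\mu(\cE_a)=1-\sum_{p/q}\mu(I_{a,p/q})=1$. For $0\le b_1<b_2<1$, put $\delta:=\mu([b_1,b_2])=\rho(a,b_2)-\rho(a,b_1)$; the interval $[b_1,b_2]$ contains every $I_{a,p/q}$ with $p/q\in (\rho(a,b_1),\rho(a,b_2))$. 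Take $q_0$ to be the smallest prime with $q_0\ge 4/\delta$, so that $q_0\le 10/\delta$ by Bertrand's postulate. Because $q_0$ is prime, every $p/q_0$ with $1\le p<q_0$ is already reduced, and at least $\delta q_0-1\ge \delta q_0/2$ such $p$ satisfy $p/q_0\in(\rho(a,b_1),\rho(a,b_2))$; consequently
\[
b_2-b_1\ge \tfrac{\delta q_0}{2}\,\ell_{q_0}\ge c(a)\,\delta q_0\,a^{q_0-1}\ge c(a)\,a^{10/\delta}.
\]
Inverting this gives $\mu([b_1,b_2])\le C(a)/|\log(b_2-b_1)|=C(a)\,h(b_2-b_1)$ for $b_2-b_1$ small and $h(t)=1/|\log t|$. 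The mass-distribution (Frostman) lemma now yields $\cH_h(\cE_a)\ge \mu(\cE_a)/C(a)=1/C(a)>0$.

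The principal obstacle is the lower bound: one must translate \lq many rationals lie between two close rotation numbers' into an effective lower bound on the total $b$-length of their $I_{a,p/q}$-images. The prime-denominator trick $q_0\sim 1/\delta$ supplies the sharp exponential-in-$1/\delta$ scaling $b_2-b_1\ge a^{O(1/\delta)}$ that produces exactly the gauge $1/|\log t|$; any weaker counting (say, averaging over Farey fractions of all denominators $\le 1/\delta$) would give only the coarser gauge $t^s$ for every $s>0$, which is precisely the slack left between the two bounds proved here.
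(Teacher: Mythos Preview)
Your argument is correct. The upper bound is essentially the paper's (the paper simply bounds each gap by the total tail length rather than invoking Jensen, arriving at $N^2 h(2Na^N)$, which gives the same conclusion).

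For the lower bound the two proofs reach the same key inequality $b_2-b_1\ge c(a)\,a^{C/\delta}$ (equivalently $\mu(I)\le C(a)/|\log|I||$) by different mechanisms. You package the argument as a mass-distribution estimate for the pullback $\mu$ of Lebesgue measure under $b\mapsto\rho(a,b)$ and obtain the length lower bound by \emph{counting} intervals $I_{a,p/q_0}$ with a single prime denominator $q_0\asymp 1/\delta$ inside $[b_1,b_2]$, using Bertrand to control $q_0$. The paper works directly with an arbitrary cover $\{I_k\}$, pushes it forward to $\rho$-intervals $J_k$ with $\sum|J_k|\ge 1$, and for each $k$ extracts the length bound $|I_k|\ge(1-a)^2 a^{j_k}$ from a \emph{single} jump in the series \eqref{b-} for $b_-(a,\rho)$ at index $j_k\approx 1/|J_k|$, then sums. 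The paper's route is shorter and needs no arithmetic input (no primes, no Bertrand); your route is more geometric, reading off the bound from the devil's-staircase structure itself, and the Frostman formulation makes the role of the natural measure on $\cE_a$ explicit. Both yield $\cH_h(\cE_a)\ge c/|\log a|$ for $h(t)=1/|\log t|$.
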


\begin{proof}
  Fix $N>1$. There are less that $N^2$ intervals $I_{a,p/q}$ with
$q\le N$. (Here and throughout the proof we consider only $I_{a,p/q}$ with
  $p/q\in\oio$ and $p/q$ in lowest terms.)
Hence, their complement $A_N:=\ooio\setminus\bigcup_{q\le N} I_{a,p/q}$ 
is a union of at most $N^2$ (open) intervals. Each of these intervals has
length at most, recalling \eqref{sum=1},
\begin{equation}\label{haus1}
  \begin{split}
\leb{A_N}&
=1-\sum_{q\le N}\sum_p\leb{I_{a,p/q}}
=\sum_{q> N}\sum_p\leb{I_{a,p/q}}
\le \sum_{q> N} q (1-a)^2 \frac{a^{q-1}}{1-a^n}
\\&
\le  (1-a) \sum_{q> N} q a^{q-1}
=\bigpar{N+(1-a)\qw}a^N.
  \end{split}
\end{equation}
Since $\cE_a\subset A_N$, it follows that,
for any gauge function $h$
\begin{equation}\label{hausdorff2}
  \cH_h(\cE_a)\le \liminf_{\Ntoo} \bigpar{N^2h(2Na^N)}.
\end{equation}
Taking $h(t)=t^\ga$, we find $\cH_\ga(\cE_a)=0$ for every $\ga>0$, and
thus the Hausdorff dimension is 0.

Furthermore, taking $h(t)=1/|\log t|^2$ in \eqref{hausdorff} we obtain
$  \cH_h(\gL_\pm)<\infty$.

For the lower bound 
for the gauge function $h(t)=1/|\log t|$,
suppose that we have a covering
\begin{equation}
  \label{hausx1}
\cE_a\subseteq\bigcup_{k=1}^\infty I_k,
\end{equation}
where $I_k=[b_k',b_k'']\subseteq\oi$.

Let $J_k:=[\rho(a,b_k'),\rho(a,b_k'')]$. Then, every irrational
$\rho\in\ooio$ equals $\rho(a,b)$ for some $b\in\cE_a$; thus $b\in I_k$ for
some $k$ and then $\rho\in J_k$. Consequently, $\bigcup_k
J_k\supseteq\ooio\setminus\bbQ$, and taking the Lebesgue measure we obtain
\begin{equation}\label{hausx2}
  \sum_k |J_k|\ge1.
\end{equation}
We shrink each $J_k$ to $[\rho_k',\rho_k'']\subseteq J_k$ with
$\rho_k',\rho_k''$ irrational and $\rho_k''-\rho_k'\ge \frac12|J_k|$.
(Ignore  $J_k$ with $|J_k|=0$, if any.)
Then $b_-(a,\rho_k'),b_-(a,\rho_k'')\in I_k$.

Let $j_k:=\floor{(\rho_k''-\rho_k')\qw}\le 2|J_k|\qw$.
Then $(j_k+1)\rho_k''\ge (j_k+1)\rho_k'+1$, and thus
$\ceil{(j_k+1)\rho_k''}\ge \ceil{(j_k+1)\rho_k'}+1$.
Hence, \eqref{b-} implies
\begin{equation}\label{hausx3}
  |I_k|\ge b_-(a,\rho_k'')-b_-(a,\rho_k')\ge (1-a)^2 a^{j_k}.
\end{equation}
If $|I_k|\ge(1-a)^4$, then \eqref{hausx3} implies $a^{j_k}\le (1-a)^2$, and
thus by \eqref{hausx3} again, $|I_k|\ge a^{2j_k}$ and
\begin{equation}
  \frac{1}{\log(1/|I_k|)}
\ge \frac{1}{2j_k\log(1/a)}
\ge \frac{|J_k|}{4\log(1/a)}.
\end{equation}
Hence, for any covering \eqref{hausx1} with $\sup|I_k|\le(1-a)^4$,
using \eqref{hausx2},
\begin{equation}
\sum_k  \frac{1}{\log(1/|I_k|)}
\ge\sum_k \frac{|J_k|}{4\log(1/a)}
\ge \frac{1}{4\log(1/a)}.
\end{equation}
Consequently, with the gauge function $h(t)=1/|\log t|$ we have
\begin{equation}
\cH_h(\cE_a)\ge 1/(4\log(1/a)).  
\end{equation}
\end{proof}

For each fixed $\rho\in\oi$, the functions $b_-(a,\rho)$ and $b_+(a,\rho)$
defined in \eqref{b-}--\eqref{b+} are analytic functions of $a\in\ooio$, and
by
\refT{TLb}, for every irrational $\rho\in\ooio$, the set 
$(a,b)\in\ooio\times\oio$ such that 
$f_{\pm,a,b}$ has rotation number $\rho$ is the smooth curve
$\gG_\rho:=\set{(a,b_-(a,\rho)):a\in\ooio}$. Hence
$\cE=\bigcup_{\rho\in\ooio\setminus\bbQ} \gG_\rho$ is an uncountable union
of these smooth curves. Each curve $\gG_\rho$ obviously has Hausdorff
dimension $1$. We show that the same holds for their union $\cE$.

\begin{theorem}
  \label{THab}
The Hausdorff dimension of $\cE$ is $1$.
\end{theorem}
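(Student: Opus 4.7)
I would prove the two inequalities separately. The lower bound is immediate: by \refT{TLb}\ref{TLbi}, for every irrational $\rho \in \ooio$ the set $\cE$ contains the analytic curve $\gG_\rho = \set{(a,b_-(a,\rho)) : a \in \ooio}$, which is a rectifiable curve of Hausdorff dimension~$1$; hence $\dim_H \cE \geq 1$.

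For the upper bound I would show $\cH^s(\cE)=0$ for every $s>1$ by a two-dimensional refinement of the covering argument used in the proof of \refT{THb}. Fix $0 < a_0 < a_1 < 1$ and set $K := [a_0,a_1] \times [0,1]$; by countable exhaustion of $\ooio \times \oio$ it suffices to show $\cH^s(\cE \cap K) = 0$. The key preparatory step is a uniform-in-$\rho$ Lipschitz estimate: differentiating the series \eqref{b-}--\eqref{b+} termwise (justified since they converge geometrically, uniformly on compact subsets of $\ooio$) and using $\ceil{(j+1)\rho},\,\floor{(j+1)\rho} \leq j+1$, I would show the existence of a constant $L = L(a_1)$ with $|\partial_a b_\pm(a,\rho)| \leq L$ for all $a \in [a_0,a_1]$ and all $\rho \in \oi$.

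Given this estimate, recall that $\cE_a \subseteq A_N(a)$, a union of at most $N^2$ open intervals of total length $\leq \eps_N := \bigpar{N + (1-a_1)\qw}\, a_1^N$, as established in the proof of \refT{THb}. Since $b_\pm(a,\rho)$ are strictly increasing in $\rho$, the left-to-right ordering of the intervals $I_{a,p/q}$ in $[0,1]$ coincides with the ordering of the fractions $p/q$ and is therefore independent of $a$; consequently each gap of $A_N(a)$ is bounded by two curves $a \mapsto b_+(a,p/q)$ and $a \mapsto b_-(a,p'/q')$ associated to fixed consecutive rationals, both of which are $L$-Lipschitz. Dividing $[a_0,a_1]$ into $\lceil (a_1-a_0)/\eps_N \rceil$ subintervals of length $\leq \eps_N$, each of the $\le N^2$ gaps restricted to such a subinterval is contained in a rectangle of width $\eps_N$ and height $\leq (1+2L)\eps_N$, hence of diameter $\leq C \eps_N$. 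This yields a cover of $\cE \cap K$ by at most $C'N^2/\eps_N$ such rectangles, so for any $s>1$,
\begin{equation*}
\cH^s_{C\eps_N}(\cE \cap K) \leq \frac{C'N^2}{\eps_N}\,(C\eps_N)^{s} = C'' N^2 \eps_N^{s-1} = O\bigpar{N^{s+1}\, a_1^{N(s-1)}} \to 0
\end{equation*}
as $N\to\infty$. Hence $\cH^s(\cE \cap K)=0$, and taking $a_0 \downarrow 0$, $a_1 \uparrow 1$ along countable sequences gives $\dim_H \cE \leq 1$.

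The only real obstacle is the uniform-in-$\rho$ Lipschitz bound on $b_\pm(\cdot,\rho)$; once that is in hand the covering is routine, since the exponential smallness of $\eps_N$ easily dominates the polynomial factor $N^2$ for any $s>1$.
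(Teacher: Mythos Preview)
Your proposal is correct and follows essentially the same route as the paper's own proof: the lower bound via the curves $\gG_\rho$, the uniform Lipschitz bound on $\partial_a b_\pm$ obtained by termwise differentiation of \eqref{b-}--\eqref{b+}, and the covering of $\cE$ restricted to a compact range of $a$ by $O(N^2/\eps_N)$ sets of diameter $O(\eps_N)$, leading to $\cH^s=0$ for $s>1$. The paper organises the cover via the Farey sequence of order $N$ and calls the pieces $E_{i,j}$, but the combinatorics and the resulting estimate $CN^{s+1}a_1^{N(s-1)}\to0$ are the same as yours.
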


\begin{proof}
We develop the argument in the proof of \refT{THb} further, taking into
account the dependence on $a$.

Let $\az\in\ooio$ and consider only $a\in(0,\az]$;
let $\cEao:= E\cap\bigpar{(0,\az]\times\oio}$.  
We let $C$ denote unspecified constants that may depend on $\az$ (but not on
$N$ below).

Let $N>1$, and let $Q_N:=\set{\frac{p}{q}\in\bbQ\cap\oi:1\le q\le N}$.
Order the elements of $Q_N$ 
as $0=r_1<\dots<r_M=1$, where $M:=|Q_N|\le N^2$. 
(This is the well-known Farey series
\cite{HardyW}.) 

By \refT{TLb},
if $b_-(a,r_j)\le b\le b_+(a,r_j)$, then $\rho(a,b)=r_j\in\bbQ$.
Hence, recalling that $b_-(a,0)=0$ and $b_-(a,1)=1$, 
\begin{equation}\label{hausE}
  \cE\subset 
\bigcup_{j=1}^{M-1}\bigset{(a,b)\in\ooio\times\oio:b_+(a,r_j)<b<b_-(a,r_{j+1})}.
\end{equation}
For any $a\le \az$, and any $i<M$, \eqref{haus1} shows that
\begin{equation}\label{haus2}
0<b_-(a,r_{j+1})- b_+(a,r_j) \le \bigpar{N+(1-a)\qw}a^N
\le (N+C)\az^N 
.
\end{equation}

Let $\gd_N:=N\az^N$, 
$M':=\ceil{\az/\gd_N}$,
and $a_i:=i\az/M'$, $i=0,\dots,M'$; thus $a_{i}-a_{i-1}=\az/M'\le \gd_N$.
Let 
\begin{equation}\label{hausEij}
  E_{i,j}:=\bigset{(a,b)\in(a_{i-1},a_{i}]\times\oio:
b_+(a,r_j)<b<b_-(a,r_{j+1})}.
\end{equation}
Then, by \eqref{hausE},
\begin{equation}\label{haus3}
  \cEao\subseteq\bigcup_{\substack{1\le i\le M'\\1\le j<M}} E_{i,j}.
\end{equation}

It follows from \eqref{b-}--\eqref{b+} that 
\begin{equation}
 \lrabs{ \frac{\partial}{\partial a}b_-(a,\rho)},
 \lrabs{ \frac{\partial}{\partial a}b_+(a,\rho)}
\le C,
\end{equation}
uniformly for all $a\in\iao$ and $\rho\in\oi$.
Consequently, if $a\in(a_{i-1},a_{i}]$, then 
$|b_-(a,\rho)-b_-(a_i,\rho)|\le C\gd_N$ and $|b_+(a,\rho)-b_+(a_i,\rho)|\le
C\gd_N$ for every $\rho\in\oi$, and it follows from \eqref{hausEij} and
\eqref{haus2} that every set $E_{i,j}$ has diameter at most
$(N+C)\az^N+C\gd_N\le C N \az^N$.
By \eqref{haus3}, $\cEao$ is covered by less than 
$MM'\le C N^2/\gd_N=C N \az^{-N}$
such sets.
Consequently, for any $\ga>1$,
\begin{equation}
  \cH_\ga(\cEao)\le \liminf_{\Ntoo}C N \az^{-N}\bigpar{C N \az^N}^\ga
=0.
\end{equation}

Finally, $\cE=\bigcup_n \cE_{\le 1-1/n}$, and thus $\cH_\ga(\cE)=0$ for
every $\ga>1$.
\end{proof}

Our final theorem on Hausdorff dimension concerns the invariant set
$\gL_\pm$ (or, equivalently, the \gol{} set $\go_{f_\pm}(x)$ for any
$x\in\oi$, see \refT{Tir}). 
In the case of a rational rotation number, this set is finite or countably
infinite, see \refT{TRb} below, so it has trivially
Hausdorff  dimension  0. 
We prove that the same holds also in the irrational case,
and prove a sharper result using
the gauge function $h(t)=1/|\log t|$.

\begin{theorem}\label{THa}
The set $\gL_\pm$ has Hausdorff dimension $0$.
Moreover, the Hausdorff measure $\cH_h(\gL_\pm)$ is finite for the gauge
function $h(t)=1/|\log t|$.
\end{theorem}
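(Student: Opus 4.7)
The plan is to cover $\gL_\pm$ by the component intervals of $A_n := f_\pm^n(\oi)$ and exploit a sharp asymmetry: the number of components grows only linearly in $n$, while each component has length at most $a^n$. Since $\gL_\pm \subseteq A_n$ for every $n$ by definition, these components yield admissible covers for computing the Hausdorff measures as $n\to\infty$.

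The first step is to show by induction on $n$ that $A_n$ is a disjoint union of at most $N_n \leq n+1$ closed intervals, each of length at most $a^n$. The base case $n=0$ is immediate since $A_0 = \oi$. For the inductive step, recall that $f_\pm$ acts as an affine contraction with slope $a$ on each of the two pieces $[0,\tau]$ and $[\tau,1]$, with the images $[b,1]$ and $[0,a+b-1]$ disjoint (as $a<1$). Given a component $J$ of $A_n$: if $\tau \notin J^\circ$, then $J\subseteq[0,\tau]$ or $J\subseteq[\tau,1]$, and is mapped by a single affine contraction to one interval of length $a|J|$; if $\tau\in J^\circ$, then $J$ splits at $\tau$ into two subintervals whose images under $f_+$ and $f_-$ lie in the disjoint sets $[b,1]$ and $[0,a+b-1]$, giving two intervals of length at most $a|J|$. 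Since $\tau$ is interior to at most one component of $A_n$, the number of components grows by at most $1$ per iteration, yielding $N_n \leq n+1$; the length bound follows from $a|J|\le a\cdot a^n=a^{n+1}$.

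Given these combinatorial and metric bounds, the conclusion is a one-line estimate. Let $I_1^{(n)},\dots,I_{N_n}^{(n)}$ denote the components of $A_n$. For any $\ga > 0$, since $\max_i |I_i^{(n)}| \leq a^n \to 0$ the components of $A_n$ form an admissible cover of $\gL_\pm$ for computing $\cH_\ga$, giving
\begin{equation*}
\cH_\ga(\gL_\pm) \leq \liminf_{n\to\infty} \sum_{i=1}^{N_n} |I_i^{(n)}|^\ga \leq \liminf_{n\to\infty} (n+1)\, a^{n\ga} = 0,
\end{equation*}
which shows that the Hausdorff dimension of $\gL_\pm$ is $0$. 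For the gauge $h(t)=1/|\log t|$ the same cover gives
\begin{equation*}
\cH_h(\gL_\pm) \leq \liminf_{n\to\infty} \sum_{i=1}^{N_n} \frac{1}{\bigl|\log|I_i^{(n)}|\bigr|} \leq \liminf_{n\to\infty} \frac{n+1}{n|\log a|} = \frac{1}{|\log a|} < \infty.
\end{equation*}

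There is no substantive obstacle; the whole argument rests on the two inductive facts above. The only small subtlety to check carefully is that components cannot merge when passing from $A_n$ to $A_{n+1}$, which is ensured by the injectivity of $f_+$ on $[0,\tau]$ and $f_-$ on $[\tau,1]$ together with the disjointness of their respective images; this guarantees that the crude bound $N_{n+1}\leq N_n+1$ is all that is needed.
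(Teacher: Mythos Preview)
Your proof is correct and follows essentially the same approach as the paper: both argue by induction that $f_\pm^n(\oi)$ is a union of at most $n+1$ closed intervals each of length at most $a^n$, and then plug this covering into the definition of Hausdorff measure to obtain $(n+1)h(a^n)\to 0$ for $h(t)=t^\ga$ and $(n+1)h(a^n)\to 1/|\log a|$ for $h(t)=1/|\log t|$. One minor remark: your final paragraph about non-merging is unnecessary, since merging of images would only \emph{decrease} the component count and thus strengthen the bound $N_{n+1}\le N_n+1$.
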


\begin{proof}
We claim that for each $n\ge0$, $f_\pm^n(\oi)$ is the union of at most $n+1$ 
disjoint closed intervals (possibly of length 0) of total length $a^n$. 
In fact, this is true for
$n=0$. Suppose that it holds for some $n$, with $f_\pm^n(\oi)=\bigcup_{j=1}^{n+1}
I_j$, where some of the intervals $I_j$ may be empty.
Then $\tau$ belongs to at most one interval $I_k=[x_k,y_k]$, and then
$f_\pm^{n+1}(I_k)= f_+([x_k,\tau])\cup f_-([\tau,y_k])$ is the union of two
disjoint closed intervals; all other intervals are mapped to single
intervals.
Since furthermore, $f_\pm$ is injective, and contracts measures by $a$, the
claim follows by induction. 

Hence, $\gL_\pm$ can for each $n$ be covered by $n+1$ intervals of lengths
$a^n$, and thus, since $a^n\to 0$, for any gauge function $h$
\begin{equation}\label{hausdorff}
  \cH_h(\gL_\pm)\le \liminf_{\ntoo} \bigpar{(n+1)h(a^n)}.
\end{equation}
Taking $h(t)=t^\ga$, we find $\cH_\ga(\gL_\pm)=0$ for every $\ga>0$, and
thus the Hausdorff dimension is 0.

Furthermore, taking $h(t)=1/|\log t|$ in \eqref{hausdorff} we obtain
$  \cH_h(\gL_\pm)\le 1/|\log(a)|<\infty$.
\end{proof}
Alternatively, we can argue as in the proof \refT{THb}, using
\eqref{gap} below. 

\smallskip  
Unlike in \refT{THb}, 
we do not know any lower bound in \refT{THa}, 
in the sense of a certain Hausdorff measure being positive.
We state this as an open problem.

\begin{problem}
Find a gauge function $h(t)$  such that $\cH_h(\gL_\pm)>0$,
at least for some $(a,b)$.
\end{problem}

In particular, we do not know whether the gauge function $1/|\log t|$ is
best possible in \refT{THa}.
We suspect that the answer might depend on the parameters;
it seems possible that
$1/|\log t|$ is best possible in \refT{THa}
if, for example, $\rho=1/\sqrt2$ or
$(\sqrt5-1)/2$, but not if $\rho$ is a Liouville number.

Similarly, we  do not know whether the gauge functions in \refT{THb} are best
possible. 
\begin{problem}
Improve, if possible, 
one or both of the gauge functions 
$1/|\log t|^2$ 
and $1/|\log t|$ 
in \refT{THb}.
\end{problem}
Again, it seems possible that the answer depends on $a$.

\section{Rational rotation number}\label{Srational}\noindent
We return to the study of orbits.
We first use the results of \refS{Sloc} to
show that $f_\pm$ has a periodic orbit if and only if the rotation number is
rational, as claimed at the end of \refS{Speriodic}.

\begin{theorem}\label{TR}
  \begin{thmenumerate}
  \item \label{TR1}  
Suppose that the rotation number $\rho=\rho(f_\pm)$ of $f_\pm$ is rational, 
say $\rho=p/q$  
(in lowest terms).
Then $f_\pm$ has a periodic orbit $C$ of length exactly $q$.
Furthermore,
$C=\set{\phirho(k/q):k=0,\dots,q-1}$.
In particular,
\begin{align}
\min C &= \phirho(0)=\psi(\rho),  \label{minC}
\\
\max C &= \phirho((q-1)/q)=\phirho(1-)=1+\psi(\rho+).\label{maxC}
\end{align}
\item \label{TR2}
Conversely, if $f_\pm$ has a periodic orbit, then the rotation number is
rational. 
Moreover,
if the periodic orbit is minimal and has length $q$, then $\rho(f_\pm)$ has
denominator $q$ in lowest terms.
  \end{thmenumerate}
\end{theorem}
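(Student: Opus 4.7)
The plan is to prove part \ref{TR1} using the $\phi_\rho$-machinery developed in \refS{Sloc}, and then to deduce part \ref{TR2} by combining \ref{TR1} with the lift-based argument already carried out inside the proof of \refT{T1}.

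For \ref{TR1}, assume $\rho=p/q$ in lowest terms and take as candidate orbit $C:=\set{\phi_\rho(k/q):k=0,\dots,q-1}$. Three things must be checked: (a) $|C|=q$; (b) $C$ is invariant under $f_\pm$ and $f_\pm$ acts on $C$ as a cyclic permutation of period $q$; (c) the stated min/max formulas. For (a), I would invoke \refL{Lpr}\ref{pr-disco}: $\phi_\rho$ jumps exactly on $\Drho=\set{n+m\rho:m\ge1,n\in\bbZ}$, and since $\gcd(p,q)=1$, for every $k\in\set{1,\dots,q-1}$ one can pick $m\in\set{1,\dots,q-1}$ with $mp\equiv k\pmod q$, so $k/q\in\Drho$. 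Combined with the constancy statement of \refL{Lpr}\ref{pr-mono2} on each $[\tfrac{k}{q},\tfrac{k+1}{q})$, this forces strict inequalities $\phi_\rho(0)<\phi_\rho(1/q)<\dots<\phi_\rho((q-1)/q)$.

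For (b), I would start from \refL{LY}, which gives $\phi_\rho(0-)\le 0\le \phi_\rho(0)$; together with \eqref{jump} (the jump at $0$ is strict because $\rho$ is rational), at least one of $\phi_\rho(0-)<0$ and $\phi_\rho(0)>0$ holds. In the former subcase, \refL{LX}\ref{LXgood0} applies: by \eqref{floorphirho} each $\phi_\rho(k/q)\in[0,1)$, and \eqref{rhof-} yields
\begin{equation*}
f_-(\phi_\rho(k/q))=\phi_\rho(\frax{k/q+p/q})=\phi_\rho\bigl(((k+p)\bmod q)/q\bigr),
\end{equation*}
which is a cyclic permutation of $\set{0,\dots,q-1}$ of order $q$ because $\gcd(p,q)=1$. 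In the latter subcase, I would apply \refL{LX}\ref{LXgood1}, using the identification $\phi_\rho(((k+1)/q)-)=\phi_\rho(k/q)$ (from constancy again), to reach the same conclusion with $f_+$ in place of $f_-$. Either way, $C$ is a minimal periodic orbit of $f_\pm$ of length exactly $q$. For (c), monotonicity (\refL{Lpr}\ref{pr-mono1}) gives $\min C=\phi_\rho(0)=\psi(\rho)$, while constancy on $[(q-1)/q,1)$ gives $\max C=\phi_\rho(1-)$; then \eqref{pr-1} and \refL{Lpsi}\ref{Lpsi-} yield $\phi_\rho(1-)=1+\phi_\rho(0-)=1+\psi(\rho+)$.

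For \ref{TR2}, let $C$ be a minimal periodic orbit of $f_\pm$ of length $q$. By \refL{LA01}, either $1\notin C$ or $0\notin C$; in the first case $C$ is a periodic orbit of $f_-$, in the second of $f_+$. The argument from the proof of \refT{T1} leading to \eqref{archimedes} then applies (using $F_-$ or $F_+$ respectively) and produces an integer $r$ such that $F^{q}(\xi_n)=\xi_n+r$ for every $n$. Iteration gives $F^{nq}(0)=nr+O(1)$, so by \refL{Lrho} the rotation number equals $r/q$, which is rational. Writing $r/q=r'/q'$ in lowest terms, so $q'\mid q$, part \ref{TR1} produces a minimal periodic orbit of length $q'$; by \refC{C1} this is the only one, hence must equal $C$, forcing $q'=q$.

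The main obstacle I anticipate is the case split in step (b): when exactly one of $\phi_\rho(0)>0$ and $\phi_\rho(0-)<0$ holds, one must be careful which branch of \refL{LX} to invoke and must match the right/left-continuous versions $f_-,\phi_\rho(\cdot)$ versus $f_+,\phi_\rho(\cdot-)$. The reconciliation goes through the constancy statement of \refL{Lpr}\ref{pr-mono2}, which collapses the distinction at the particular points $k/q$.
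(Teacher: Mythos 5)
Your proposal is correct and takes essentially the same route as the paper: part \ref{TR1} via the $\phirho$-machinery with the identical case split between \refL{LX}\ref{LXgood0} (when $\phirho(0-)<0$) and \refL{LX}\ref{LXgood1} (when $\phirho(0)>0$), reconciled through the constancy of $\phirho$ on the intervals $[\tfrac kq,\tfrac{k+1}q)$, and part \ref{TR2} via the lifts. The only cosmetic differences are that for \ref{TR2} the paper argues directly that $f_-^q(x)=x$ forces $F_-^q(x)=x+p$ by \refL{LF} instead of re-invoking the $\Xi$-construction from the proof of \refT{T1}, and that you spell out the coprimality step that the paper dispatches as a consequence of \ref{TR1} and \refC{C1}.
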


\begin{proof}
\pfitemref{TR1}
By \refL{LY} and \eqref{goodpsi}, $\psi(\rho+)\le0\le\psi(\rho)$.
Define $x_k:=\phirho(k/q)$, $k\in\bbZ$, and note that, by \eqref{pr-1},
\begin{equation}\label{xk+q}
  x_{k+q}=\phirho(k/q+1)=x_k+1.
\end{equation}
By \refL{Lpr}\ref{pr-disco}, $x_k<x_{k+1}$.
Furthermore, 
\begin{equation}
  \label{x0}
x_0=\phirho(0)=\psi(\rho)\ge0,
\end{equation}
and, recalling \refL{Lpr}\ref{pr-mono2},
\begin{equation}
  \label{xq-}
x_{q-1}=\phirho((q-1)/q)=\phirho(1-)=1+\phirho(0-)=1+\psi(\rho+)\le1.
\end{equation}

Suppose first that $\psi(\rho+)<0$. Then, recalling \eqref{psi+},
\refL{LX}\ref{LXgood0} applies, and \eqref{rhoF-} holds. Consequently, for
any $k\in\bbZ$,
\begin{equation}
F_-(x_k)=F_-\Bigpar{\phirho\Bigparfrac{k}q}=\phirho\Bigpar{\frac{k}q+\rho}
=\phirho\Bigpar{\frac{k}q+\frac{p}q}
=x_{k+p}  .
\end{equation}
This implies, by \refL{LF}\ref{LFpi},
$f_-(\frax{x_k})=\frax{F_-(x_k)}=\frax{x_{k+p}}$, and thus by iteration
$f_-^n(\frax{x_k})=\frax{x_{k+np}}$ for any $n\ge0$.
Taking $n=q$ we find, using \eqref{xk+q},
$f_-^q(\frax{x_k})=\frax{x_{k}+p}=\frax{x_k}$, so $\frax{x_k}$ lies in a
periodic orbit $C$ of $f_-$. Moreover, it is easy to see that
\begin{equation}\label{C=}
C=\bigset{\frax{x_k}}_{k\in\bbZ}=\bigset{\frax{x_k}}_{k=0}^{q-1}
=\set{x_k}_{k=0}^{q-1},
\end{equation}
using the fact that $x_k\in\oio$ for $0\le k\le q-1$ by 
\eqref{x0}--\eqref{xq-}. We thus have $\min C = x_0$ and $\max C = x_{q-1}$;
hence \eqref{x0}--\eqref{xq-} yield \eqref{minC}--\eqref{maxC}.

If $\psi(\rho+)=0$, then necessarily $\psi(\rho)=\phirho(0)>0$, see
\eqref{jump}. 
In this case,
\refL{LX}\ref{LXgood1} applies, and \eqref{rhoF+} holds.
By \refL{Lpr}, $\phirho$ is constant on the interval
$[\frac{k}q,\frac{k+1}q)$, and thus, using \eqref{rhoF+},
\begin{equation}
  \begin{split}
F_+(x_k)
&=F_+\Bigpar{\phirho\Bigpar{\frac{k}q}}    
=F_+\Bigpar{\phirho\Bigpar{\frac{k+1}q-}}    
=\phirho\Bigpar{\Bigpar{\frac{k+1}q+\rho}-}    
\\&
=\phirho\Bigpar{{\frac{k+1+p}q}-}    
=\phirho\Bigpar{{\frac{k+p}q}} 
=x_{k+p}      . 
  \end{split}
\raisetag{\baselineskip}
  \end{equation}
We can now repeat the arguments above, using $f_+$, $F_+$ and $\fraxx\cdot$
instead of $f_-$, $F_-$  and $\frax\cdot$; this shows that 
$C=\set{x_k}_{k=0}^{q-1}$
now is a periodic
orbit for $f_+$. Note that in the present case, $C\subset\ooi$.

\pfitemref{TR2} 
Suppose that $f_\pm$ has a periodic orbit. By \refL{LA}, 
either $f_-$ or $f_+$ has a periodic orbit; let us assume that $f_-$ has
one.
Then, for some $x\in\oio$ and some $q\ge1$, $f_-^q(x)=x$, which by \refL{LF}
implies $F_-^q(x)=x+p$ for some integer $p$. Consequently, $F_-^{nq}(x)=x+np$
for every $n\ge0$, and thus $F_-^{nq}(x)/n\to p/q$; hence the rotation number
is $p/q$.

If $q$ is minimal, then $p$ and $q$ are coprime, as a consequence of
\ref{TR1} and \refC{C1} (or by a simple direct argument which we omit). 
\end{proof}

By \refT{T1}, $f_\pm$ has a universal limit cycle.
Combining these results, we obtain the following.

\begin{theorem}\label{TRb}
  Suppose that $a\in\ooio$ and $\rho\in\oio$ with $\rho$ rational.
Then $f_\pm$ has rotation number $\rho(f_\pm)=\rho$ 
if and only if one of the following
three cases holds. 
\begin{romenumerate}
  \item \label{TRb-}
$b=b_-(a,\rho)$.
Then $f_\pm$ has a unique periodic orbit $C$, with $0\in C$ but $1\notin C$.
$C$ is also a periodic orbit of $f_-$, but $f_+$ has no periodic orbit.

Furthermore, $\gL_-=C$, while $\gL_+=\emptyset$ and $\gL_\pm =
C\cup O_1$, where $O_1$ is the orbit of $1$.

  \item \label{TRb0}
$b_-(a,\rho)<b<b_+(a,\rho)$.
Then $f_\pm$ has a unique periodic orbit $C$, with $0,1\notin C$.
Furthermore, $\gL_\pm=\gL_+=\gL_-=C$.

\item \label{TRb+}
$b=b_+(a,\rho)$.
As in \ref{TRb-}, interchanging $0$ and $1$ and indices $+$ and $-$.
  \end{romenumerate}
In all three cases,
every orbit of $f_\pm$ converges to $C$, so 
$\go_{f_\pm}(x)=\go_{f_-}(x)=\go_{f_+}(x)=C$ for every
$x\in\oi$.
\end{theorem}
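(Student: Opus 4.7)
My plan is to combine Theorem~\ref{TLb} (which parametrises those $(a,b)$ giving rotation number $\rho$), Theorem~\ref{TR} (which produces the periodic orbit $C$ and pins down its extreme points), and the case analysis of Section~\ref{Sorbits+} (which gives the invariant and limit sets once one knows whether $\tau\in C$). By Theorem~\ref{TLb}, $\rho(f_\pm)=\rho$ iff $b\in[b_-(a,\rho),b_+(a,\rho)]$, and since $\rho$ is rational this is a genuine (nondegenerate) interval by Theorem~\ref{TLb}\ref{TLbr<}; so cases \ref{TRb-}, \ref{TRb0}, \ref{TRb+} partition the set of $b$ yielding rotation number $\rho$. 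Writing $\rho=p/q$ in lowest terms, Theorem~\ref{TR}\ref{TR1} produces a length-$q$ periodic orbit $C$ with $\min C=\psi(\rho)$ and $\max C=1+\psi(\rho+)$, unique by Corollary~\ref{C1}. Theorem~\ref{T1} then immediately gives the final sentence of the theorem: $\omega_{f_\pm}(x)=C$ for every $x\in\oi$, and the analogous identities for $\omega_{f_-}$ and $\omega_{f_+}$ follow because every $f_-$- or $f_+$-orbit is also an $f_\pm$-orbit.

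The next step is to translate the endpoint conditions on $b$ into membership conditions on $\{0,1\}$. Equations \eqref{wb-}--\eqref{wb+} give $(1-a)\psi(\rho)=b-b_-(a,\rho)$ and $(1-a)\psi(\rho+)=b-b_+(a,\rho)$, so $0\in C\iff\min C=0\iff\psi(\rho)=0\iff b=b_-(a,\rho)$ places $0\in C$ exactly in case~\ref{TRb-}, and similarly $1\in C\iff\max C=1\iff\psi(\rho+)=0\iff b=b_+(a,\rho)$ places $1\in C$ exactly in case~\ref{TRb+}. Lemma~\ref{LA01} confirms these two events are mutually exclusive, consistent with $b_-(a,\rho)<b_+(a,\rho)$. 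In case~\ref{TRb-} the unique $f_\pm\qw$-preimage of $0$ is $\tau$, so $\tau\in C$ with the $f_-$-branch taken, making $C$ a periodic orbit of $f_-$; any periodic orbit of $f_+$ would equal $C$ by Corollary~\ref{C1}, but $0\in C$ while $0\notin f_+(\oi)\subseteq\ooi$, so $f_+$ has no periodic orbit. We are then in Case~1b of Section~\ref{Sorbits+} (with $0\in C$), which yields $\Lambda_-=C$, $\Lambda_+=\emptyset$, $\Lambda_\pm=C\cup O_1$. Case~\ref{TRb+} is the mirror image, handled either by symmetric repetition or by applying Remark~\ref{Rreflection}. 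In case~\ref{TRb0}, $\min C=\psi(\rho)>0$ and $\max C=1+\psi(\rho+)<1$ exclude $0$ and $1$ from $C$, and hence $\tau\notin C$ as well, because $f_\pm(\tau)=\{0,1\}$ would otherwise force $0\in C$ or $1\in C$; this is Case~1a of Section~\ref{Sorbits+}, giving $\Lambda_-=\Lambda_+=\Lambda_\pm=C$.

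The only slightly subtle point is the assertion $\Lambda_+=\emptyset$ in case~\ref{TRb-} (and symmetrically $\Lambda_-=\emptyset$ in case~\ref{TRb+}). Section~\ref{Sorbits+} records this as part of Case~1b, but the reason is worth noting for the proof: $f_+$ has no periodic orbit, so any $y\in\Lambda_+$ would carry an infinite $f_+\qw$-backward orbit inside $\oi$; Theorem~\ref{T1} forces all accumulation of this backward orbit to lie in $C$, yet $0\in C$ together with $0\notin f_+(\oi)$ prevents $C$ itself from lying in $\Lambda_+$. The decreasing sequence of non-closed sets $f_+^n(\oi)$ therefore collapses to the empty intersection rather than to any $f_+$-invariant subset.
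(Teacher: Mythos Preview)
Your proof is correct and follows essentially the same route as the paper's: \refT{TLb} for the trichotomy on $b$, \refT{TR} and \refC{C1} for the existence and uniqueness of $C$, \eqref{minC}--\eqref{maxC} with \eqref{wb-}--\eqref{wb+} to decide membership of $0$ and $1$ in $C$, and then the case split in \refS{Sorbits+} for the invariant and limit sets. You even supply a couple of details the paper leaves implicit (why $f_+$ has no periodic orbit in case~\ref{TRb-}, and why $\omega_{f_-}(x)=\omega_{f_+}(x)=C$).

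One caveat: your supplementary third paragraph, where you try to justify $\Lambda_+=\emptyset$ beyond citing Case~1b, does not quite work as written. \refT{T1} governs forward orbits, so it does not directly constrain accumulation points of a \emph{backward} $f_+$-orbit, and the final sentence is more assertion than argument. A cleaner way to see it (still elementary) is: $\Lambda_+\subseteq\Lambda_\pm=C\cup O_1$; since $f_+$ is injective and $f_+(\Lambda_+)=\Lambda_+$, the unique $f_+$-preimage of any point of $\Lambda_+$ again lies in $\Lambda_+$; but $0\notin f_+(\oi)$ forces $0\notin\Lambda_+$, and then pulling back through $f_+^{-1}$ successively removes $c_1,\dots,c_{q-1}=\tau$, then $1=f_+(\tau)$, then each point of $O_1$. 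Hence $\Lambda_+\cap(C\cup O_1)=\emptyset$, so $\Lambda_+=\emptyset$. Since you already (correctly) invoke Case~1b for this fact, the flaw is only in the optional gloss, not in the proof itself.
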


\begin{proof}
  The rotation number $\rho(f_\pm)$  equals $\rho$ if and only if 
$b_-(a,\rho)\le b\le b_+(a,\rho)$ 
by \refT{TLb}. In this case, $f_\pm$ has a periodic orbit $C$ by
  \refT{TR}. Furthermore, $C$ is unique by \refC{C1}, and
by \eqref{minC}--\eqref{maxC} and \eqref{wb-}--\eqref{wb+},
$0\in C\iff \psi(\rho)=0\iff b=b_-(a,\rho)$
and
$1\in C\iff \psi(\rho+)=0\iff b=b_+(a,\rho)$.
Hence, $\tau\in C$ if and only if $b=b_-(a,\rho)$ or $b=b_+(a,\rho)$.
In other words, we are in Case 1a in \refS{Sorbits+} in \ref{TRb0},
and in Case 1b in \ref{TRb-} and \ref{TRb+}.
\end{proof}

\begin{remark}\label{RRb}
\refT{TRb} shows that 
if $\rho(f_\pm)$ is rational, then
$\go_{f_\pm}(x)\subseteq \gL_\pm$ for all $x$,
with equality in Case \ref{TRb0}, but strict inclusion in \ref{TRb-} and
\ref{TRb+}.
 
In contrast, we have $\go_{f_-}(x)\supseteq \gL_-$ for all $x$,
with equality in Cases \ref{TRb-} and \ref{TRb0}, but strict inclusion in 
\ref{TRb+}, when $\gL_-=\emptyset$, and similarly for $f_+$.
\end{remark}

\begin{theorem}\label{TRbc}
  If the dynamical system $f_\pm$ has a rational rotation number, then
$f_\pm$ has a universal limit cycle $C$.
Thus every orbit of $f_\pm$ converges to $C$.
Furthermore, the symbolic sequence of every orbit is eventually periodic.
\end{theorem}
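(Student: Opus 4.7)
The plan is to chain together Theorems \ref{TR}\ref{TR1} and \ref{T1}: the hypothesis $\rho(f_\pm)=p/q$ rational (in lowest terms) forces the existence of a periodic orbit $C$ of length exactly $q$ by Theorem \ref{TR}\ref{TR1}, and Theorem \ref{T1} then promotes $C$ to the universal limit cycle of $f_\pm$. This gives the first two assertions directly.

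For the eventual periodicity of the symbolic sequence $(\eps_n)$, I would show that $\eps_n=\eps_{n+q}$ for all $n$ sufficiently large. Fix any orbit $(x_n)_0^\infty$ of $f_\pm$, and let $C=\set{y_0,\dots,y_{q-1}}$ be the minimal periodic orbit; by the universal limit cycle property there exists $j$ with $x_n-y_{(j+n)\bmod q}\to 0$ as \ntoo. The plan is to upgrade this to sign-preserving convergence by reinspecting the proof of \refT{T1}: after an initial transient of at most $q$ steps, $x_n$ sits in a cell on which $F_-^q$ (or $F_+^q$, according to which branch the tail of the orbit follows) acts as a linear contraction of factor $a^q$, which via \eqref{puh2} yields the identity $x_{n+q}-y_k=a^q(x_n-y_k)$ with $k=(j+n)\bmod q$; hence $x_n-y_k$ retains a constant sign for $n$ large and decays geometrically.

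From sign preservation the conclusion follows by examining the $q$ residue classes of $n$ modulo $q$. For each $k$ with $y_k\ne\tau$, one may fix $\gd<|y_k-\tau|$; for $n$ large in the subsequence $\set{n:(j+n)\bmod q=k}$ the point $x_n$ lies strictly on the same side of $\tau$ as $y_k$, so $\eps_n$ is determined by $k$ alone via \eqref{eps}. The delicate case is $y_k=\tau$, which can only occur in Case 1b of \refS{Sorbits+}; there either the orbit eventually coincides with $C$ and the symbolic choice at $\tau$ is frozen by the one taken along $C$, or the orbit eventually follows the unique orbit of $0$ or $1$, which by \refL{LA01} never revisits $\tau$, and sign preservation again forces $x_n$ to approach $\tau$ from one fixed side. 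Combining the $q$ residue classes yields $\eps_{n+q}=\eps_n$ for all large $n$, so $(\eps_n)$ is eventually periodic with period dividing $q$. The main obstacle is precisely the Case 1b bookkeeping: one must keep track of which of the two continuations at $\tau$ the tail of the orbit follows and confirm that sign-preserving geometric convergence still applies to that tail.
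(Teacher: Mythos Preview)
Your chaining of Theorems \ref{TR}\ref{TR1} and \ref{T1} for the first two assertions is exactly what the paper does (via \refT{TRb}), and your argument for the symbolic sequence is correct, but it is considerably more elaborate than the paper's.

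The paper bypasses the entire sign-preservation analysis and the Case~1b bookkeeping by a one-line algebraic observation: once you know $x_n-y_n\to0$ (with $(y_n)$ the periodic orbit suitably indexed), the identity \eqref{eps2} gives
\[
\eps_n^x-\eps_n^y=(ax_n+b-x_{n+1})-(ay_n+b-y_{n+1})=a(x_n-y_n)-(x_{n+1}-y_{n+1})\to0,
\]
and since $\eps_n^x,\eps_n^y\in\setoi$ this forces $\eps_n^x=\eps_n^y$ for all large $n$. No information about which side of $\tau$ the orbit sits on is needed; the integrality of the symbols does all the work. Your geometric route via \eqref{puh2} is sound and gives a bit more (the explicit monotone approach), but it requires you to reopen the proof of \refT{T1} and handle the degenerate interval $I_j''=\{\xi_{j+1}\}$ that arises exactly when $0\in C$ (or symmetrically $1\in C$), whereas the paper's argument is self-contained and indifferent to whether $\tau\in C$.
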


\begin{proof}
  The first statement follows from \refT{TRb}, and it implies the second by
  definition.
Thus, again by the definitions, if $(x_n)_0^\infty$ is any orbit, there
exists a periodic orbit $(y_n)_0^\infty$ (started at a suitable point
$y_0\in C$) such that $x_n-y_n\to0$ as $\ntoo$.
By \eqref{eps2} this implies, with obvious notation,
$\eps_n^x-\eps_n^y\to0$,
and thus $\eps_n^x=\eps_n^y$ for all large $n$ since
$\eps_n^x,\eps_n^y\in\setoi$. 
Consequently, the symbolic sequence for the 
  orbit $(x_n)_0^\infty$ equals from some point on the symbolic sequence for
  $(y_n)_0^\infty$, which is periodic.
\end{proof}

\begin{example} \label{E00}  
By \refT{TLb}, or \refL{LY} and \eqref{psi0}--\eqref{psi0+}, 
the rotation number is 0 if and only if $0\le
b\le 1-a$, \ie, if and only if $a+b\le 1$.
This is the simple case studied already in Examples \ref{E<1}, \ref{E1} and
\ref{E0}. We see from Theorems \ref{TR} and \ref{TRb}, or directly as in
these examples, that in this case (and only in this case) there is a fixed
point, \ie, a periodic cycle of length 1, and that every orbit converges to
the fixed point. 
The cases $b=0$ and $b=1-a$ discussed in Examples \ref{E0} and \ref{E1}
are the cases \ref{TRb-} and \ref{TRb+} in \refT{TRb}.

\refT{Teps} shows that when $\rho=0$, at most a finite number of the symbols
$\eps_i$ are non-zero. In fact, it is easy to see that there can be at most
one non-zero symbol.
\end{example}

\subsection{A sufficient condition for a rational rotation number}
By Theorems \ref{THb} and \ref{THab}, 
or  by the earlier  results by 
\citet{conze} and  \citet{laurent-nogueira}
discussed in \refS{Shaus},
the rotation number is rational for `most'
values of the parameters $(a,b)$.
Explicit examples with a rational rotation number can easily be produced
using \refT{TLb}.
Another large class of parameter values with a rational rotation number is
given by 
the following 
theorem by
Laurent and Nogueira
\cite[Theorem 3]{laurent-nogueira}, 
which we quote for later reference;
their proof is based on a number theoretic result by 
\citet[Theorem 7]{lox}, 
combined with results by \cite{conze}
(our \eqref{b-}--\eqref{b+} and \refT{TLb}).

\begin{theorem}[\citet{laurent-nogueira}]\label{Talgebraic}
  If $a$ and $b$ are algebraic numbers, then 
the dynamical system $f_\pm$  has a rational rotation number.
\qed
\end{theorem}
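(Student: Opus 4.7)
The plan is to argue by contradiction. Suppose that $a$ and $b$ are algebraic, but that the rotation number $\rho:=\rho(a,b)$ is irrational. Then \refT{TLb}\ref{TLbi} forces $b=b_-(a,\rho)$, and the expansion \eqref{b-2} gives
\begin{equation}\label{gseriesplan}
b = (1-a)\sum_{j=0}^\infty c_j\, a^j,
\qquad c_j := \ceil{(j+1)\rho}-\ceil{j\rho}.
\end{equation}
Because $\rho$ is irrational, $j\rho$ is non-integer for every $j\ge 1$, so $c_j=\floor{(j+1)\rho}-\floor{j\rho}\in\setoi$ (and $c_0=1$); thus $(c_j)_{j\ge 0}$ is the classical Sturmian sequence of slope $\rho$: bounded, integer-valued, and not eventually periodic.

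Set $g(z):=\sum_{j\ge 0} c_j z^j$, so \eqref{gseriesplan} reads $b=(1-a)g(a)$. Since $a$ is algebraic with $1-a\ne 0$, the algebraicity of $b$ is equivalent to that of $g(a)$. The key input is the number-theoretic theorem of Loxton and van der Poorten \cite[Theorem 7]{lox}: under hypotheses that cover Sturmian-type coefficient sequences, a power series with bounded integer coefficients, evaluated at an algebraic point of the open unit disc, is transcendental unless its coefficient sequence is eventually periodic. Applied to $(c_j)$ and to the algebraic $a\in\ooio$, this yields that $g(a)$ is transcendental, contradicting the algebraicity derived above. Hence $\rho\in\bbQ$, as required.

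The main obstacle is the verification that the Loxton--van der Poorten transcendence criterion genuinely covers the specific ceiling-difference sequence $(c_j)$; this requires recognizing $(c_j)$ as having the combinatorial structure (essentially, of a sequence arising from a Beatty partition, falling within the automatic/regular framework of \cite{lox}) to which \cite[Theorem 7]{lox} applies. This step is precisely what is carried out in \cite{laurent-nogueira}, using the formulas \eqref{b-}--\eqref{b+} to match the setting of \cite{lox}. Granting this input, the remainder of the proof reduces to the one-line contradiction above.
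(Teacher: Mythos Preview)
Your proposal is correct and matches exactly the approach the paper indicates: the paper does not give its own proof of \refT{Talgebraic} but merely quotes it from \cite{laurent-nogueira}, noting that their argument combines the formulas \eqref{b-}--\eqref{b+} and \refT{TLb} with \cite[Theorem 7]{lox}. Your outline---deduce $b=(1-a)\sum_{j\ge0}c_j a^j$ with $(c_j)$ a non-eventually-periodic Sturmian sequence when $\rho\notin\bbQ$, then invoke the Loxton--van der Poorten transcendence criterion to contradict the algebraicity of $g(a)=b/(1-a)$---is precisely this argument, and you correctly flag that the substantive technical content lies in checking that \cite[Theorem 7]{lox} applies to this particular coefficient sequence, which is what \cite{laurent-nogueira} carries out.
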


\section{Irrational rotation number}\label{Sirrational}\noindent
We now consider the case when $f_\pm$ has an irrational rotation number
$\rho=\rho(f_\pm)$. 
By \refT{TR}\ref{TR2}, $f_\pm$ has no periodic orbit.
Hence, this is Case 2 in \refS{Sorbits+}; we proceed to verify the claims
there. 

By \refL{LY} and \eqref{jump}, $\phirho(0)=\psi(\rho)=0$,
and thus, see \eqref{pr-1}, $\phirho(1)=1$.
Moreover, $\phirho$ is strictly increasing, by \refL{Lpr}, and thus 
$\phirho$ gives a bijection of $\oio$ onto $\glo:=\phirho(\oio)\subset\oio$.

It follows from \eqref{rhof-} that $f_-(\glo)=\glo$, and that $f_-$
restricted to $\glo$ is a bijection, which is conjugated by $\phirho$ to the
rotation $x\mapsto \frax{x+\rho}$ on $\oio$. 

By \refL{Lpr}\ref{pr-disco}, 
the set of discontinuities of $\phirho$ in $\oi$ is 
\begin{equation}
\Drho\cap\oi=\bigset{ \frax{m\rho}:m\ge1}.
\end{equation}
This set is countably infinite, and dense in \oi; note also that
$0,1\notin\Drho$. 
Let $x_i:=\frax{i\rho}$, so $\Drho\cap\oi=\set{x_i}_1^\infty$,
and let $\xi_i:=\phirho(x_i-)$ and $\eta_i:=\phirho(x_i)$.
Since $\phirho$ is strictly increasing and right-continuous (\refL{Lpr}), 
it follows that
\begin{equation}\label{glo}
  \glo=\phirho(\oio)=\oio\setminus\bigcup_{i=1}^\infty [\xi_i,\eta_i)
\end{equation}
and
\begin{equation}\label{bglo}
  \bglo=\oi\setminus\bigcup_{i=1}^\infty (\xi_i,\eta_i)
= \set{\phirho(x),\phirho(x-):x\in\oi}. 
\end{equation}
It follows from \eqref{phirho} that the gap $(\xi_i,\eta_i)$ has length
\begin{equation}\label{gap}
  \eta_i-\xi_i=(1-a)a^{i-1},
\qquad i=1,2,\dots
\end{equation}
Hence, the sum of the lengths of the gaps is 1, so $\bglo$ has Lebesgue
measure 0. In fact, it has Hausdorff dimension 0, see \refT{THa}.

Note also that \eqref{rhof-} implies $f_-(\phirho(1-\rho))=0$, and thus
$\tau=\phirho(1-\rho)$. In particular, $\tau\in \glo$; furthermore,
$\tau\neq\eta_i$ for $i\ge1$, and consequently, $\tau\notin[\xi_i,\eta_i]$.
Since $f_-(\eta_i)=\eta_{i+1}$, 
by  \eqref{rhof-} again, 
it follows
that for every $i\ge1$, 
$f_-$ maps $[\xi_i,\eta_i]$ linearly onto $[\xi_{i+1},\eta_{i+1}]$;
furthermore, $f_\pm=f_+=f_-$ on each such interval.
Finally, \eqref{rhof-} and \eqref{rhof+} 
(with $x=0$)
imply 
\begin{equation}\label{f01}
f_\pm(0)=\eta_1\qquad\text{and}\qquad f_\pm(1)=\xi_1. 
\end{equation}
This describes the dynamics of $f_\pm$ on $\oi\setminus\glo$ completely.
It follows easily, by induction, that
\begin{align}
  f_-^n(\oio)&=\oio\setminus\bigcup_{i=1}^n[\xi_i,\eta_i),\label{fn-}\\
  f_+^n(\ooi)&=\ooi\setminus\bigcup_{i=1}^n(\xi_i,\eta_i],\label{fn+}\\
  f_\pm^n(\oi)&=\oi\setminus\bigcup_{i=1}^n(\xi_i,\eta_i).\label{fn+-}
\end{align}

\begin{remark}
As shown above, $\tau\in\gL_\pm$, and thus also $0,1\in\gL_\pm$ whenever
$\rho(f_\pm)$ is irrational, see \eqref{01tau}.
\end{remark}

\begin{theorem}\label{Tir}
Suppose that $f_\pm$ has an irrational rotation number
$\rho=\rho(f_\pm)$. 
Then
\begin{align}
\gL_\pm&=\bglo= \set{\phirho(x),\phirho(x-):x\in\oi}, \label{tirgl+-}\\
\gL_-&=\glo= \set{\phirho(x):x\in\oio}, \label{tirgl-}\\
\gL_+&=\gli:= \set{\phirho(x-):x\in\ooi}
=\bglo\setminus\set{0,\eta_1,\eta_2,\dots}. \label{tirgl+}
\end{align}
Furthermore, the limit sets 
$\go_{f_\pm}(x)=\go_{f_-}(x)=\go_{f_+}(x)=\gL_\pm$ for every $x\in\oi$.

For any orbit $(x_n)\ooz$, the distance $d(x_n,\gL_\pm)\le a^n$ for every 
$n\ge0$;
hence the orbits converge to $\gL_\pm$ uniformly (and geometrically).
\end{theorem}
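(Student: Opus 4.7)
The plan is to carry out three moves in sequence: extract the explicit descriptions of the invariant sets from the iteration formulas \eqref{fn-}--\eqref{fn+-}; prove the geometric rate $d(x_n,\gL_\pm)\le a^n$ by a component count; and combine this rate with minimality of the rotation on $\bglo$, via the semi-conjugacies \eqref{rhof-}--\eqref{rhof+}, to identify the $\go$-limit sets.

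For the structural identities \eqref{tirgl+-}--\eqref{tirgl+}, intersecting \eqref{fn-}--\eqref{fn+-} over $n\ge0$ yields
\begin{equation*}
\gL_- = \oio \setminus \bigcup_{i=1}^\infty [\xi_i,\eta_i),
\quad
\gL_+ = \ooi \setminus \bigcup_{i=1}^\infty (\xi_i,\eta_i],
\quad
\gL_\pm = \oi \setminus \bigcup_{i=1}^\infty (\xi_i,\eta_i),
\end{equation*}
and comparison with \eqref{glo}--\eqref{bglo} identifies these with $\glo$, $\gli$, $\bglo$. The descriptions in terms of $\phirho$ follow because $\phirho$ is strictly increasing and right-continuous on $\oi$ with jumps exactly on $\Drho=\set{x_i}_{i\ge 1}$, and (for irrational $\rho$) $\phirho(0)=0$, $\phirho(1-)=1$ thanks to $\psi(\rho)=\psi(\rho+)=0$; a brief bookkeeping on which endpoints $\xi_i,\eta_i$ are attained by $\phirho$ versus $\phirho(\cdot-)$ gives $\gli=\bglo\setminus\set{0,\eta_1,\eta_2,\dots}$.

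For the geometric rate, $f_\pm$ is injective and acts as a linear map with slope $a$ on each of its two continuity pieces, so it scales Lebesgue measure by $a$ and $\leb{f_\pm^n(\oi)}=a^n$. By \eqref{fn+-}, $f_\pm^n(\oi)$ is the complement in $\oi$ of $n$ disjoint open intervals and hence a union of at most $n+1$ closed components whose lengths sum to $a^n$; each component has length at most $a^n$, and every endpoint lies in $\set{0,1}\cup\set{\xi_i,\eta_i:1\le i\le n}\subseteq\bglo=\gL_\pm$. Since every orbit satisfies $x_n\in f_\pm^n(\oi)$, we get $d(x_n,\gL_\pm)\le a^n$, which is the uniform geometric convergence claimed and yields $\go_{f_\pm}(x)\subseteq\gL_\pm$ immediately; the analogous inclusions for $\go_{f_-}(x)$ and $\go_{f_+}(x)$ are automatic.

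The heart of the proof is the reverse inclusion $\gL_\pm\subseteq\go_{f_\pm}(x)$, which I would obtain by minimality. The relations \eqref{rhof-}--\eqref{rhof+} show that $\phirho$ semi-conjugates the irrational rotation $R_\rho(y)=\frax{y+\rho}$ on $\oio$ to $f_\pm$ on $\bglo$, so density of $\set{\frax{z+n\rho}}_{n\ge 0}$ in $\oio$ for every $z$, combined with the one-sided continuities of $\phirho$ and $\phirho(\cdot-)$, implies that the $f_\pm$-orbit of every $y\in\bglo$ is dense in $\bglo$, regardless of choices made at possible visits to $\tau$. To finish, given $x\in\oi$ I pass to a subsequence $x_{n_k}\to y\in\bglo$, and by a diagonal extraction obtain a further subsequence along which $x_{n_k+j}\to y_j$ for every fixed $j\ge 0$; the right-continuity of $f_-$, the left-continuity of $f_+$, and the agreement of $f_-,f_+$ off $\tau$ together force $y_{j+1}\in f_\pm(y_j)$ for every $j$, so $(y_j)$ is an $f_\pm$-orbit of $y$ contained in the closed set $\go_{f_\pm}(x)$, whence $\bglo\subseteq\go_{f_\pm}(x)$ by density. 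The cases $\go_{f_-}(x)$ and $\go_{f_+}(x)$ follow because, since $\tau$ is not periodic in the irrational case, any two orbits of $x$ differ in at most one step and thus share the same $\go$-limit set. The main obstacle will be the density claim for $f_\pm$-orbits passing through a gap endpoint $\xi_i$ or $\tau$ itself, where one must iterate via \eqref{rhof+} rather than \eqref{rhof-} and verify that $\set{\phirho(\frax{n\rho}-):n\ge 1}$ is dense in $\bglo$ by approximating each $\eta_i$ from above by $\phirho(y)$ with $y\notin\Drho$ and each $\xi_i$ from below by $\phirho(y-)$; once that technicality is isolated, the diagonal extraction and the final minimality step are straightforward.
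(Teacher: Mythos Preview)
Your proof is correct, and the overall architecture---identities \eqref{tirgl+-}--\eqref{tirgl+} from \eqref{fn-}--\eqref{fn+-}, the semi-conjugacy \eqref{rhof-}--\eqref{rhof+} on $\bglo$, and the observation from \refS{Sorbits+} that in the irrational case every $f_\pm$-orbit is an $f_-$- or $f_+$-orbit---matches the paper's.

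The difference is in how you handle a general initial point $x\in\oi$ and how you obtain the geometric rate. You argue globally: the components of $f_\pm^n(\oi)$ have total length $a^n$ and endpoints in $\gL_\pm$, which gives $d(x_n,\gL_\pm)\le a^n$; then you pass to a limit $y\in\bglo$, run a diagonal extraction to produce an $f_\pm$-orbit $(y_j)$ of $y$ inside $\go_{f_\pm}(x)$, and finally invoke minimality on $\bglo$. The paper instead exploits a single explicit relation: since $f_\pm$ carries each closed gap $[\xi_i,\eta_i]$ linearly onto $[\xi_{i+1},\eta_{i+1}]$, for $x\in[\xi_i,\eta_i)$ one has
\[
f_-^n(\eta_i)-f_-^n(x)=a^n(\eta_i-x)
\]
(this is \eqref{vesp}). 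That identity simultaneously yields $d(x_n,\gL_\pm)\le a^n$ and shows that the orbit of $x$ shadows that of $\eta_i\in\glo$, so $\go_{f_-}(x)=\go_{f_-}(\eta_i)$; the latter is $\bglo$ by the one-line semi-conjugacy argument (approach $u$ from the right and use right-continuity of $\phirho$). The case of $f_+$ is then obtained by the reflection symmetry of \refR{Rreflection}, not by a separate computation. In this way the paper bypasses the diagonal extraction and the auxiliary density check for orbits through $\xi_i$ or $\tau$ that you flag as ``the main obstacle''. Your route is more in the spirit of general minimality arguments for piecewise contractions and would survive without an explicit semi-conjugacy; the paper's route is shorter here precisely because \eqref{vesp} is available.
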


\begin{proof}
First, \eqref{tirgl+-}--\eqref{tirgl+} follow from \eqref{fn-}--\eqref{fn+-}
and \eqref{glo}--\eqref{bglo}.

For the limit sets, consider  first $f_-$. 
Suppose first that $x\in\glo$. Then $x=\phirho(t)$ for some $t\in\oio$, and 
thus $f_-^n(x)=f_-^n(\phirho(t))=\phirho(\frax{t+n\rho})\in\glo$.
Hence, $\go_{f_-}(x)\subseteq\bglo$. On the other hand, for any
$y=\phirho(u)\in\glo$, there exists a subsequence $\xpar{n_k}$ such that 
$t_{n_k}:=\frax{t+n_k\rho}\to u$ with $t_{n_k}\ge u$; since $\phirho$ is
right-continuous, this implies $f_-^{n_k}(x)\to\phirho(u)=y$.
Hence, $\go_{f_-}(x)\supseteq\glo$. 
Since $\go_{f_-}(x)$ is closed by \eqref{omegaf}, this implies
$\go_{f_-}(x)\supseteq\bglo$, and thus
$\go_{f_-}(x)=\bglo=\gL_\pm$.

On the other hand, if $x\in\oio\setminus\glo$, 
then $x\in[\xi_i,\eta_i)$ for some $i$.
Since $f_-$ is a linear contraction on each interval $[\xi_i,\eta_i]$, it
follows that 
\begin{equation}\label{vesp}
f_-^n(\eta_i)-f_-^n(x)=a^n(\eta_i-x)\to0   
\end{equation}
as \ntoo; hence the
orbit of $x$ is asymptotic to the orbit of $\eta_i\in\glo$, and thus 
$\go_{f_-}(x)=\go_{f_-}(\eta_i)=\bglo=\gL_\pm$ in this case too.

Finally, for $x=1$, recall from \eqref{f01} that $f_-(1)=\xi_1\in\oio$. Thus
$\go_{f_-}(1)=\go_{f_-}(\xi_1)=\gL_\pm$.
Hence $\go_{f_-}(x)=\gL_\pm$ for every $x\in\oi$.

By symmetry (\refR{Rreflection}), also $\go_{f_+}(x)=\gL_\pm$ for every
$x\in\oi$.

The description of the orbits in the beginning of \refS{Sorbits+}
shows that every orbit for $f_\pm$ is an orbit for $f_-$ or for $f_+$.
Hence, for any $x\in\oi$, $\go_{f_\pm}(x)=\go_{f_-}(x)\cup\go_{f_+}(x)=\gL_\pm$.

Now, let $(x_n)\ooz$ be an arbitrary orbit. If $x_0\in\gL_\pm$, then 
$x_n\in\gL_\pm$ for every $n$, and thus $d(x_n,\gL_\pm)=0$.
On the other hand, if $x_0\in\oi\setminus\gL_\pm\subset \oio\setminus\gL_0$,
then for every $n\ge1$, \eqref{vesp} implies
$d(x_n,\gL_\pm)\le d(x_n,f_-^n(\eta_i))\le a^n$.
\end{proof}

\begin{remark}
  \label{Rir}
In particular, if $\rho(f_\pm)$ is irrational, then, 
for any $x$,
$\go_{f_\pm}(x)= \gL_\pm$, 
while 
$\go_{f_-}(x)\supsetneq \gL_-$
and $\go_{f_+}(x)\supsetneq \gL_+$.
Cf.\ the case of a rational rotation number in \refR{RRb}.
\end{remark}

\begin{remark}
  It is easy to see that when $\rho(f_\pm)$ is irrational, $\gL_\pm$ is a
  Cantor set, i.e., a totally disconnected perfect compact set 
(and thus homeomorphic to the Cantor cube $\set{0,1}^\infty$).
In fact, $\gL_\pm$ is compact and non-empty, and totally disconnected since
it has measure 0 and thus does not contain any open interval.
Finally, if $x\in\gL_\pm$, then $x\in\go_{f_\pm}(x)$ by \refT{Tir}, 
so there exists an
orbit $(x_n)$ with $x_0=x$ and a subsequence $x_{n_k}\to x$. Then each
$x_n\in\gL_\pm$ since $\gL_\pm$ is invariant, and $x_n\neq x$ for $n\ge1$
since there is
no periodic orbit; hence $x$ is not isolated in $\gL_\pm$.
\end{remark}

\begin{remark}
  When $\rho$ is irrational, as shown above,
$0,1,\tau\in\gL_\pm=\go_{f_\pm}(x)$ for
any $x$. Hence, since each $x$ has at most two orbits, any orbit comes
arbitrarily close to the discontinuity point
$\tau$ (on both sides), as well as to $0$ and $1$,
infinitely often. 
\end{remark}

\section{The invariant measure}\label{Sinvariant}\noindent
If $\rho(f_\pm)$ is rational, so there exists a periodic orbit $C$ by
\refT{TR}, then there is an obvious invariant probability measure $\mu$
on $C$,
\viz{} the uniform measure with mass $1/|C|$ at each point.
This measure $\mu$ 
is invariant under $f_\pm$ in the sense that if $1\notin C$ it is
invariant under $f_-$ and if $0\notin C$ then it is invariant under $f_+$;
recall that at least one of these cases occurs, see \refT{TRb}.

Suppose now that $\rho(f_\pm)$ is irrational.
Then we construct an invariant probability measure $\mu$ as 
the image measure of the Lebesgue measure on $\oi$ under 
the map $\phi_\rho$, where $\rho:=\rho(f_\pm)$.
Then $\phi_\rho:\oi\to\gL_\pm$, see \eqref{tirgl+-}, and thus $\mu$ is a
probability measure on $\gL_\pm$. Since $\phi_\rho$ is strictly increasing
by \refL{Lpr}, $\mu$ is in this case a continuous measure, i.e., each point
has measure 0.
Moreover,
\eqref{good} holds by \refL{LY}, 
so \refL{LX} applies, and it follows from \eqref{rhof-}
that $\mu$ is invariant under $f_-$;
$\mu$ is invariant under $f_+$ too since $\mu$ has
no point mass at $\tau$.

\begin{theorem}\label{Tmu}
Let $(x_i)\ooz$ be an arbitrary orbit of $f_\pm$.
Then the empirical measure $\frac{1}n\sum_{i=0}^{n-1}\gd_{x_i}$ converges
weakly to the invariant $\mu$ as $\ntoo$.
\end{theorem}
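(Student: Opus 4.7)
The plan is to split according to whether $\rho(f_\pm)$ is rational or irrational, and in each case reduce the weak convergence of the empirical measure to a classical equidistribution statement. In the rational case the universal limit cycle supplied by Theorems \ref{T1} and \ref{TR} immediately gives the result; in the irrational case I would use the conjugation $\phirho$ of \refL{LX} to transport Weyl's equidistribution theorem for irrational rotations to $\gL_\pm$, and then extend to arbitrary orbits using the geometric tracking established in \refT{Tir}.

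In the rational case, let $C=\set{y_0,\dots,y_{k-1}}$ be the universal limit cycle, with $\mu$ the uniform measure on $C$. By the definition of a universal limit cycle, any orbit $(x_i)$ satisfies $x_i-y_{(j+i)\bmod k}\to 0$ for some shift $j\in\set{0,\dots,k-1}$; uniform continuity of any $f\in C(\oi)$ then gives
\begin{equation*}
\frac1n\sum_{i=0}^{n-1}f(x_i)-\frac1n\sum_{i=0}^{n-1}f\bigpar{y_{(j+i)\bmod k}}\tend 0,
\end{equation*}
and the second sum tends to $k\qw\sum_{i=0}^{k-1}f(y_i)=\int f\dd\mu$ by exact periodicity. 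This step is essentially routine.

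In the irrational case, \refL{LY} together with \eqref{jump} forces $\phirho(0)=0$, so \refL{LX} conjugates $f_-$ on $\glo$ (and, via left limits, $f_+$ on $\gli$) to the irrational rotation $R_\rho:t\mapsto\frax{t+\rho}$ on $\oio$; hence every orbit starting in $\gL_\pm$ is of the form $\phirho(\frax{t+i\rho})$ (or its left-continuous variant) for some phase $t\in\oio$. Since $\mu$ is by construction the pushforward of Lebesgue measure on $\oio$ under $\phirho$, for any continuous $f:\oi\to\bbR$ the composition $g:=f\circ\phirho$ is bounded and continuous outside the countable (hence Lebesgue null) set $\Drho\cap\oi$, so $g$ is Riemann integrable on $\oio$. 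Weyl's equidistribution theorem for irrational rotations therefore yields
\begin{equation*}
\frac1n\sum_{i=0}^{n-1}g(\frax{t+i\rho})\tend\intoi g\dd s=\int f\dd\mu.
\end{equation*}
To transfer this to an arbitrary orbit $(x_i)$ of $f_\pm$, I would invoke the tracking in \refT{Tir}: if $x_0\in[\xi_j,\eta_j)$ then, by the identity \eqref{vesp} on the gap intervals, $|x_i-\tilde x_i|\le a^i$, where $(\tilde x_i)$ is the orbit of $\eta_j\in\gL_\pm$; uniform continuity of $f$ then makes the Cesàro averages of $f(x_i)$ and of $f(\tilde x_i)$ agree in the limit.

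The main obstacle, modest but worth spelling out, is the application of Weyl's theorem to the Riemann integrable but not continuous test function $g=f\circ\phirho$; this is handled by sandwiching $g$ between continuous functions whose integrals differ by an arbitrarily small amount, a standard refinement of the classical statement. A minor bookkeeping point is that $\mu$ has no atoms in the irrational case (since $\phirho$ is strictly increasing), and in the rational case the choices at $\tau$ change the empirical measure by $O(1/n)$; in either case the potential ambiguity between $\phirho(t)$ and $\phirho(t-)$ at countably many $t$ is harmless for weak convergence.
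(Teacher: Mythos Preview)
Your proof is correct and follows essentially the same route as the paper: split into the rational case (handled via the universal limit cycle) and the irrational case (handled via the conjugation $\phi_\rho$ to the irrational rotation, plus the geometric tracking \eqref{vesp} for orbits not starting in $\gL_0$). The one stylistic difference is that where you compose with $\phi_\rho$ to obtain a Riemann integrable test function and invoke the Riemann--integrable form of Weyl's theorem, the paper instead pushes forward the empirical measures $\nu_n=\frac1n\sum\gd_{\frax{t+i\rho}}$ and appeals to the continuous mapping theorem for weak convergence \cite[Theorem 5.1]{Billingsley}, using that $\phi_\rho$ is $\gl$--a.e.\ continuous; these are of course two packagings of the same idea. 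One small point the paper makes explicit and you leave implicit: an $f_\pm$ orbit can visit $1$ at most once (otherwise $1$ would be periodic, contradicting \refL{LA01}), so after discarding an initial segment one may assume $x_n=f_-^n(x_0)$, which cleanly reduces to the $f_-$ dynamics on which the conjugation \eqref{rhof-} acts.
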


\begin{proof}
If $\rho=\rho(f_\pm)$ is rational, this follows from the fact that the orbit
converges 
to the limit cycle $C$, see \refT{TRbc}.

Thus suppose that $\rho$ is irrational. 
Then the orbit visits 1 at most once, and if it does, it suffices to
consider the part of the orbit after 1. Hence, we may assume that
$x_0\in\oio$ and that $x_n=f_-^n(x_0)$.

If $x_0\in \gL_0$, so $x_0=\phi_\rho(t)$ for some $t\in\oio$ (see
\eqref{tirgl-}), then \eqref{rhof-} implies
$x_i=\phi_\rho(\frax{t+i\rho})$, and hence $\mu_n:=\frac{1}n\sum_0^{n-1}\gd_{x_i}$
is the image under $\phi_\rho$ of the measure
$\nu_n:=\frac{1}n\sum_0^{n-1}\gd_{\frax{t+i\rho}}$.
As \ntoo, the measures $\nu_n$ converge weakly to the uniform measure $\gl$
on $\oio$, and since $\phi_\rho$ is measurable and $\gl$-a.e.\ continuous
(by \refL{Lpr}), it follows that $\mu_n\to\mu$ weakly, see
\cite[Theorem 5.1]{Billingsley}. 

If $x_0\in \oio\setminus\gL_0$, 
then there exists as in the proof of \refT{Tir} an $\eta_i\in\gL_0$ such
that \eqref{vesp} holds. We have just shown that
the theorem holds for the orbit starting at $\eta_i$, 
and then \eqref{vesp} implies that the same holds
for the orbit starting at $x_0$.
\end{proof}

\begin{corollary}
  The invariant measure $\mu$ has center of mass
$\intoi x\dd\mu = \chi:=(b-\rho(f_\pm))/(1-a)$.
\end{corollary}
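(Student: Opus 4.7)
The plan is to combine the two already-proven facts about orbits: \refT{Tmean} on the asymptotic average of an orbit, and \refT{Tmu} on the weak convergence of the empirical measures. Fix any orbit $(x_i)_{i=0}^\infty$ of $f_\pm$, for instance the orbit starting at $x_0=0$.

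First, form the empirical measures $\mu_n := \frac{1}{n}\sum_{i=0}^{n-1}\gd_{x_i}$, which are all supported in the compact set $\oi$. By \refT{Tmu}, $\mu_n\to\mu$ weakly as \ntoo. Since the function $g(x)=x$ is bounded and continuous on $\oi$, weak convergence gives
\begin{equation*}
\intoi x\dd\mu_n \longrightarrow \intoi x\dd\mu.
\end{equation*}

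Next, observe that $\intoi x\dd\mu_n = \frac{1}{n}\sum_{i=0}^{n-1} x_i$, and by \refT{Tmean} this converges to $\chi = (b-\rho(f_\pm))/(1-a)$. Combining the two limits yields $\intoi x\dd\mu = \chi$, which is the claim.

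There is no substantial obstacle here: both ingredients are already in hand, and the only point to check is that one may pair weak convergence against the continuous bounded test function $g(x)=x$, which is automatic on the compact space $\oi$. The proof is essentially one line once \refT{Tmean} and \refT{Tmu} are invoked.
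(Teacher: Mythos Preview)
Your proof is correct and is essentially identical to the paper's own proof: the paper also defines $\mu_n$ as the empirical measure, invokes \refT{Tmu} to get $\intoi x\dd\mu_n\to\intoi x\dd\mu$, and invokes \refT{Tmean} to get $\intoi x\dd\mu_n\to\chi$. You have simply spelled out the (trivial) justification that $x\mapsto x$ is a valid test function for weak convergence on $\oi$.
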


\begin{proof}
With $\mu_n$ as in the proof of \refT{Tmu},
$\intoi x\dd\mu_n\to\intoi x\dd\mu$ by \refT{Tmu}, and
$\intoi x\dd\mu_n\to\chi$ by \refT{Tmean}.
\end{proof}

\begin{theorem}\label{Tnu}
The measure $\mu$ is the only probability
measure on $\oi$ that is invariant under $f_-$ or $f_+$.
\end{theorem}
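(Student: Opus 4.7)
My plan is to exploit \refT{Tmu}, which says that for every orbit of $f_\pm$ the empirical measures converge weakly to $\mu$. Suppose that $\nu$ is a probability measure on $[0,1]$ invariant under $f_-$; the argument for $f_+$ is entirely analogous, and the two cases together cover the statement. The first step is a standard Ces\`aro averaging: for any bounded Borel function $g$ on $[0,1]$, invariance under $f_-$ gives $\int g\,d\nu = \int g\circ f_-^i\,d\nu$ for every $i\ge 0$, and hence
\begin{equation*}
\int g\,d\nu
\;=\;
\int \frac{1}{n}\sum_{i=0}^{n-1} g\bigl(f_-^i(x)\bigr)\,d\nu(x).
\end{equation*}

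The second step is to invoke \refT{Tmu}. For any initial point $x\in[0,1]$, the sequence $(f_-^i(x))_{i=0}^\infty$ is a legitimate orbit of the multi-valued system $f_\pm$ (one that always happens to take the choice $f_-(\tau)=0$ whenever $\tau$ is visited), so the empirical measure $\frac{1}{n}\sum_{i=0}^{n-1}\delta_{f_-^i(x)}$ converges weakly to $\mu$. Testing against any $g\in C_b([0,1])$,
\begin{equation*}
\frac{1}{n}\sum_{i=0}^{n-1} g\bigl(f_-^i(x)\bigr) \;\longrightarrow\; \int g\,d\mu
\end{equation*}
pointwise in $x$, with the left side uniformly bounded by $\|g\|_\infty$. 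Since $\nu$ is a probability measure, bounded convergence lets me push the limit through the outer integral, yielding $\int g\,d\nu = \int g\,d\mu$ for every $g\in C_b([0,1])$. This forces $\nu = \mu$, completing the proof.

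There is no real obstacle here; the substantive work is already done by \refT{Tmu}, and the only point requiring mild care is that \refT{Tmu} supplies weak convergence, so one must test against continuous rather than indicator functions. One sanity check worth noting: in the rational case, $\mu$ (the uniform measure on the cycle $C$) is invariant under only one of $f_-, f_+$, namely whichever of them misses the point $C\cap\set{0,1}$, see \refT{TRb}. The statement of the theorem remains correct in that case, since the argument above would show that any probability measure invariant under the ``wrong'' map must equal the non-invariant $\mu$, a contradiction; thus no such invariant measure exists and the uniqueness claim is vacuously true for that map.
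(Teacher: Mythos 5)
Your proof is correct and is essentially the paper's own argument: both reduce the statement to \refT{Tmu} by averaging the invariance identity over $i=0,\dots,n-1$ and passing to the limit via bounded/dominated convergence (the paper merely phrases this probabilistically with $X_i:=f_-^i(X_0)$ and $\E h(X_i)=\int h\dd\nu$). Your closing remark about the rational case, where $\mu$ fails to be invariant under one of $f_-,f_+$ and the uniqueness claim for that map holds vacuously, is a correct and worthwhile observation not spelled out in the paper.
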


\begin{proof}
  Suppose that $\nu$ is such a probability measure, invariant under, say,
  $f_-$.
Let $X_0$ be a random point in $\oi$ with the distribution $\nu$, and let
$X_n:=f_-^n(X_0)$. Then $X_n$ is a sequence of random variables, each having
the same distribution $\nu$.

Let $h\in C\oi$ be an arbitrary continuous function on $\oi$.
Then \refT{Tmu} shows that
\begin{equation}
  \frac1n\sum_{i=0}^{n-1} h(X_i) \to \int h\dd\mu.
\end{equation}
The random variables on the left-hand side are uniformly bounded, so by
dominated convergence,
\begin{equation}
\frac1n\sum_{i=0}^{n-1}\E h(X_i)=
  \E\Bigpar{ \frac1n\sum_{i=0}^{n-1} h(X_i)} \to \int h\dd\mu.
\end{equation}
On the other hand, each $X_i$ has distribution $\nu$, so $\E h(X_i)=\int
h\dd\nu$. Consequently, $\int h\dd\nu=\int h\dd\mu$, which, since $h$ is
arbitrary, means $\nu=\mu$.
\end{proof}

\section{Phragm\'en's election method}\label{Sphr}\noindent

\subsection{Definition of Phragm\'en's method}\label{SSphr-def}

\phragmen's election method can be described in several different, but
equivalent, ways. 
For our purposes it is convenient to use the following, which is based on
\phragmen's original formulation (in French) in \cite{Phragmen1894}; 
see also
\cite{Phragmen1895,Phragmen1896,Phragmen1899}, \cite{SJV9} 
and \refSS{SSphr-algo} below
for different
formulations and motivations.

\begin{metod}{\phragmen's election method} 
Assume that each ballot has some \emph{voting power} $t$; this number is
the same for all ballots and will be determined later. 
A candidate needs total voting power $1$ in order to be elected.
The voting power of a
ballot may be used by the candidates on that ballot, and it may
be divided among several of the candidates on the ballot. 
During the
procedure described below, some of the voting power of a ballot may be already
assigned to already elected candidates; the remaining voting power of the
ballot
is free.

The seats are distributed one by one.

For each seat, each remaining candidate
may use all the free voting power of each ballot that includes
the candidate. (I.e., the full voting power $t$ except for the voting power
already assigned from that ballot to candidates already elected.)
The ballot voting power $t$ 
that would give the candidate voting power $1$ is
computed, and the candidate requiring the smallest voting power $t$ is
elected. All free (\ie, unassigned) 
voting power on the ballots that contain the elected
candidate is assigned to that candidate, and these assignments remain fixed
throughout the election. 

The computations are then repeated for the next seat 
for the remaining candidates
(resulting in a new voting power $t$), 
and so on.
\end{metod}

Ties are broken by lot or by some other method. The required voting power
$t$ increases for each seat, except in some cases of a tie where $t$ may
remain the same.

\subsection{An algorithmic version of \phragmen's method}
\label{SSphr-algo}

For any set $\gs$ of candidates (parties in the party version),
let $v_\gs$ be the number of votes for the set $\gs$.
Hence the total number of votes for  candidate (party) $i$ is
\begin{equation}\label{WWI}
  \WW_i:=\sum_{\gs\ni i} v_\gs.
\end{equation}

\phragmen's method is often formulated in the following algoritmical form,
where  $\WW_i$ is reduced to a \emph{reduced vote} $W_i$ when some
candidates on  ballots containing $i$ already have been elected:

For each set $\gs$ with $v_\gs>0$ (i.e., each group of identical ballots),
we assign dynamically a 
\emph{place number} $q_\gs$, which is a real non-negative number that can be
interpreted as the (fractional) number of seats elected so far by these
ballot; the sum of the place numbers is always equal to the number of seats
already allocated. The place numbers are assigned and the seats
are allocated recursively  by the following rules. 
\begin{romenumerate}
\item 
Initially all place numbers $q_i=0$.  
\item \label{phr-wi}
The reduced vote for candidate $i$ is defined as
\begin{equation}\label{wi}
  W_i:=\frac{\sum_{\sigma\ni i} v_\sigma}{1+\sum_{\sigma\ni i} q_\sigma},
\end{equation}
\ie, the total number of votes for the candidate divided by 1 + their total
place number.
\item 
The candidate $i$ with the largest $W_i$ is elected to the next seat, breaking
ties by lot or some other method.
(In the original version, only unelected candidates are considered.
In the party version, repetitions are allowed.)
\item \label{phr-update}
If $i$ is elected, then $q_\gs$ is updated for every $\gs\ni i$ (i.e., for
the ballots that contributed to the election of $i$); the new value is
\begin{equation}
  q_\gs':=\frac{v_\gs}{W_i}.
\end{equation}
$q_\gs$ remains unchanged when $\gs\not\ni i$.\\
Repeat from \ref{phr-wi}.
\end{romenumerate}
It is easily verified from \eqref{wi} that \ref{phr-update} 
increases $\sum_\gs q_\gs$
by 1, so by induction, $\sum_\gs q_\gs$  
equals the number of elected, as claimed above.

For a proof that this really yields the same result as the definition in
\refSS{SSphr-def}, see e.g.\ \cite{SJV9}; we remark here only that the
connection is that the voting power $t$ required to elect candidate $i$ in
the previous version equals $1/W_i$ with $W_i$ given by \eqref{wi}, and that
$q_\gs$ is the total voting power already assigned to previously elected on
the ballots of type $\gs$.

\subsection{\phragmen's method as a dynamical system}
\label{SSphr-dynamical}

\phragmen's method (in the party version)
can be regarded as a dynamical system as follows.

Let $\cP$ be the set of parties (or candidates, in the original version),
and
let as above $v_\gs$ be the number of votes for the set $\gs$ of parties.
(We regard these numbers as fixed.)
Define $\WW_i$ by \eqref{WWI}.
We may ignore parties that do not appear on any ballot,
and thus we assume that $\WW_i>0$ for every $i\in\cP$.
Let 
\begin{equation}\label{PP}
\PP:=\set{\gs\subseteq\cP:v_\gs>0 \text{ and } \gs\neq\emptyset},  
\end{equation}
the family of all nonempty sets of parties with at least one vote for the set.
(I.e., the different types of ballots that occur.
We ignore blank votes, i.e., $\gs=\emptyset$, since they do not affect the
outcome.) 

We use the formulation of \phragmen's method in \refSS{SSphr-def}, and let
$\free_\gs=\free_\gs(n)$ be the free voting power of each ballot $\gs$
when $n$ candidates have been elected.
Let $\vfree=\vfree(n)=(\free_\gs)_{\gs\in\PP}$ be the vector of free voting powers.
Let $\vett:=(1)_{\gs\in\PP}$ be the vector with all components 1.
The description in \refSS{SSphr-def} now can be formalized as follows:
\begin{romenumerate}

\item \label{PHRinitiate}
Initialize all $\free_\gs:=0$.

\item \label{PHRvp}
A party (candidate) $i$ can use a voting power
\begin{equation}\label{vix}
  V_i(\vfree)
=V_i((\free_\gs)_\gs)
:=\sum_{\gs\ni i}v_\gs \free_\gs.
\end{equation}
For each $i\in\cP$, find $\gD_i:=\gD_i(\vfree)$ such that 
$V_i(\vfree+\gD_i\vett)=1$, \ie,
\begin{equation}\label{gD_i}
\sum_{\gs\ni i}v_\gs (\free_\gs+\gD_i)
=1.  
\end{equation}

\item \label{PHRmin}
Find $\ix$ such that $\gD_{\ix}$ is minimal, \ie,
$\gD_{\ix}=\min_{i\in\cP} \gD_i$.\\
Output $\ix$ as the next elected.

\item \label{PHRupdate}
Update $\vfree$ to
\begin{equation}\label{phrupdate}
\free_\gs':=
  \begin{cases}
    \free_\gs+\gD_{\ix}, & \ix\notin\gs, \\
    0, & \ix\in\gs.
  \end{cases}
\end{equation}
Repeat from \ref{PHRvp}.
\end{romenumerate}

In the original version, candidates that are elected are not considered
further, but in the party version there is no such restriction.

We can regard \ref{PHRvp}--\ref{PHRupdate} as a function $f$,  
taking a vector $\vfree$ to a new vector $f(\vfree)=(\free_\gs')_\gs$; 
a natural state space is
\begin{equation}\label{K}
K:=
\bigset{\vfree=(\free_\gs)_\gs\in\ooo^\PP:V_i(\vfree)\le1 \; \forall  i\in \cP}.
\end{equation}
If $\vfree\in K$ and
$\gs\in\PP$, take any $i\in\gs$; then
$V_i(\vfree)\le1$ and thus  $x_\gs\le 1/v_\gs<\infty$ by \eqref{vix}.
Consequently,
$K$ is closed and bounded,
i.e.,  $K$ is a compact
subset of $\bbR^\PP$.
Note that the equation \eqref{gD_i} is a linear equation in $\gD_i$, with
positive coefficient $\WW_i$; thus the equation has a unique solution
$\gD_i(\vfree)$. Moreover, $\gD_i(\vfree)\ge0$ for $\vfree\in K$.

Ties are possible in \ref{PHRmin}; in that case we choose $\ix$ by lot or by
some other method. We regard the method as indeterminate in that case.
We formalize this by defining, for $i\in \cP$, 
\begin{equation}\label{Ki}
  K_i:=\bigset{\vfree\in K: \gD_i(\vfree)\le \gD_j(\vfree)\; \forall j\in \cP},
\end{equation}
\ie, the set of free voting powers where $i$ can be chosen as $\ix$.
Then \ref{PHRupdate} (with $\ix=i$) defines a function $f_i:K_i\to K$, and
$f$ is the union of these functions.
Note that $K=\bigcup_i K_i$, so $f$ is defined everywhere on $K$, but $f$
is multivalued at points in the intersection $K_i\cap K_j$ of two (or more)
domains. (Cf.\ \cite{cat}, where multivalued functions of this type are
studied in the case when each $f_i$ is a contraction.)

Note that the result is the same if all vote numbers $v_\gs$ are multiplied
by the same positive constant. We may thus divide by the total number of
votes and thus replace the numbers of votes by their proportions; we keep
the notation $v_\gs$ but may thus without loss of generality assume
$\sum_\gs v_\gs=1$. Moreover, we allow $v_\gs$ to  be arbitrary real numbers
in $\oi$ (with sum 1). (In a real election, the proportions are of course
rational numbers, but we may imagine that we have weighted votes, where
voters have different weigths that are arbitrary positive real numbers.)

The general case seems quite difficult to analyse, 
so we consider in the sequel the case of only two parties.

\begin{remark}
  The dynamical system just described is in general not locally contractive
for the standard Euclidean metric on $K\subset\ooo^\PP$
(or for the $\ell^1$ or $\ell^\infty$ metric, say), 
  not even for two parties; see \eqref{os2} below for a counterexample.
\end{remark}

\subsection{\phragmen's method for two parties}

With two parties $A$ and $B$, the possible votes are $A$, $B$ and $AB$
(and blank votes, but they  may be ignored as said above).
For convenience, we may assume as above that $v_\gs$ is the proportion of
votes on $\gs$, and thus that they sum to 1; furthermore
we change notation and denote these proportions by
$\ga:=v_A$, 
$\gb:=v_B$ and $\gab:=v_{AB}=1-\ga-\gb$.

By symmetry, we may assume $\ga\ge\gb\ge0$.
The cases  $\gb=0$ and $\ga=\gb$  are simple, see Examples \ref{Egb=0} and
\ref{E==}. 
We may thus assume $\ga>\gb>0$. 
We shall show that it then is possible to transform the
dynamical system in \refSS{SSphr-dynamical}
into the system $f_\pm=\set{\frax{ax+b},\fraxx{ax+b}}$
studied above, for some $a$ and $b$.

We do the transformation in several steps.
First, note that we do not use all of the set $K$ in \eqref{K}.
In fact, when $A$ is elected we put $x_A=x_{AB}=0$, and when $B$ is elected
we put $x_B=x_{AB}=0$. Hence, both $f_A$ and $f_B$ map $K$ into the subset,
with $\vfree=(x_A,x_B,x_{AB})$,
\begin{equation}
K':= K\cap\bigpar{\bigset{(x,0,0):x\ge0}
\cup
\bigset{(0,y,0):y\ge0}}
%
\end{equation}
and thus it suffices to consider the action of $f_A$ and $f_B$ on $K'$.

There are thus two cases:

\begin{xenumerate}
\item 
Suppose that
$\vfree=(x,0,0)$.
If the voting power of each ballot is increased by $\gD$, 
then $A$ has available voting power,
cf.\ \eqref{vix}--\eqref{gD_i},  
\begin{equation}
V_A(\vfree+\gD\vett)
=v_A(x+\gD) + v_{AB} \gD=(\ga+\gab)\gD+\ga x=(1-\gb)\gD+\ga x, 
\end{equation}
and thus
$A$ requires additional voting power
\begin{equation}
\gD_A = \frac{1-\ga x}{1-\gb}.  
\end{equation}

On the other hand, $B$ has available voting power 
\begin{equation}
  V_B(\vfree+\gD\vett)=
v_B\gD+v_{AB}\gD=(\gb+\gab)\gD=(1-\ga)\gD,
\end{equation}
so $B$ requires voting power
\begin{equation}
  \gD_B=
 \frac{1}{1-\ga}.  
\end{equation}

Since $\ga>\gb$ by assumption, $\gD_B>1/(1-\gb)\ge \gD_A$; hence the next
seat goes to $A$, updating $(x,0,0)$ to $(x',y',z')$ with $x'=z'=0$ and
\begin{equation}
  y'=\gD_A=\frac{1-\ga x}{1-\gb}.
\end{equation}

\item 
Suppose that
$\vfree=(0,y,0)$.
Arguing as above, we find that the additional voting power  required for the
two parties are 
\begin{align}
\gD_A &= \frac{1}{\ga+\gab}=\frac{1}{1-\gb},
\\
\gD_B &= \frac{1-\gb y}{\gb+\gab}= \frac{1-\gb y}{1-\ga}.  
\end{align}
Thus, there are two subcases: (In case of equality in \eqref{iia} and
\eqref{iib}, we are in the indeterminate case when both alternatives are
possible; the same applies to all transformations below.) 
\begin{alphenumerate}
\item 
$A$ is elected if
\begin{equation}\label{iia}
\frac{1}{1-\gb}\le  \frac{1-\gb y}{1-\ga},
\end{equation}
or, equivalently,
\begin{equation}
  \gb y \le 1-\frac{1-\ga}{1-\gb} = \frac{\ga-\gb}{1-\gb}.
\end{equation}
The free voting powers are updated to $(0,y',0)$ where 
\begin{equation}
  y':=y+\gD_A=y+\frac{1}{1-\gb}.
\end{equation}

\item 
$B$ is elected if
\begin{equation}\label{iib}
\frac{1}{1-\gb}\ge  \frac{1-\gb y}{1-\ga},
\end{equation}
or, equivalently,
\begin{equation}
  \gb y \ge 1-\frac{1-\ga}{1-\gb} = \frac{\ga-\gb}{1-\gb}.
\end{equation}
The free voting powers are updated to $(x',0,0)$ with
\begin{equation}
  x':=\gD_B= \frac{1-\gb y}{1-\ga}.
\end{equation}
\end{alphenumerate}
\end{xenumerate}

\subsubsection{First dynamical system}
Since $x_{AB}=0$ on $K'$, we may ignore $x_{AB}$ and write the elements of
$K'$ as $(x_A,x_B)$.
\phragmen's method can thus be formulated as a dynamical system, 
operating on vectors
$(x,y)\in(\ooo\times\set0)\cup(\set0\times\ooo)$
by the function $(x,y)\mapsto \fx_1(x,y)$ given by
\begin{romenumerate}
\item \label{ios1}
If $y=0$, then output $A$ and let
\begin{equation}\label{os1}
  \fx_1(x,0):=\Bigpar{0,\frac{1-\ga x}{1-\gb}}.
\end{equation}
\item[(iia)] 
If $x=0$ and $\gb y \le \frac{\ga-\gb}{1-\gb}$, then output $A$ and let
\begin{equation}\label{os2}
  \fx_1(0,y):=\Bigpar{0,y+\frac{1}{1-\gb}}.
\end{equation}
\item [(iib)] 
If $x=0$ and $\gb y \ge \frac{\ga-\gb}{1-\gb}$, then output $B$ and let
\begin{equation}\label{os3}
  \fx_1(0,y):=\Bigpar{\frac{1-\gb y}{1-\ga},0}.
\end{equation}
\end{romenumerate}
The system starts in $(0,0)$, and thus begins with \ref{ios1} or (iia)
which both give the same result when $x=y=0$.

\subsubsection{Second dynamical system}

We can simplify the analysis by noting that an election of $B$, by
\eqref{os3} always gives case \ref{ios1} and thus
election of $A$ for the next seat. Let us consider these two seat
assignments as a combined move. 
The combination thus start as in (iib) above with $\vfree=(0,y)$,
where $\gb y \ge \xpqfrac{\ga-\gb}{1-\gb}$. First $B$ is elected,
leaving by \eqref{os3} 
each ballot $A$ with
a free voting power $x'=\xpqfrac{1-\gb y}{1-\ga}$. 
Secondly, $A$ is elected, leaving by \eqref{os1} 
each ballot $B$ with
a free voting power 
\begin{equation}
  y''=\frac{1-\ga x'}{1-\gb}
=\frac{1-\ga-\ga(1-\gb y)}{(1-\ga)(1-\gb)}
=\frac{1-2\ga+\ga\gb y}{(1-\ga)(1-\gb)}.
\end{equation}

Using this combination instead of (iib) above, each case yields a vector of
the form $(0,y)$.
We can thus simplify the dynamical system to the following, acting on a single
variable $y\ge0$ (starting with $y=0$) by the function $\fx_2$ given by
\begin{romenumerate}
\item 
If $\gb y \ge \frac{\ga-\gb}{1-\gb}$, then output $BA$ and let
\begin{equation}\label{qu2}
  \fx_2(y):=\frac{1-2\ga+\ga\gb y}{(1-\ga)(1-\gb)}.
\end{equation}
\item 
If $\gb y \le \frac{\ga-\gb}{1-\gb}$, then output $A$ and let
\begin{equation}\label{ya2}
  \fx_2(y):=y+\frac{1}{1-\gb}.
\end{equation}
\end{romenumerate}

\subsubsection{Third dynamical system}

We simplify further by replacing $y$ by $z:=(1-\gb)y$, noting that
\begin{equation*}
\gb y \ge \frac{\ga-\gb}{1-\gb}  
\iff 
\gb z \ge \ga-\gb
\iff
z\ge \frac{\ga}\gb-1.
\end{equation*}
This yields an equivalent dynamical system acting on 
a variable $z\ge0$ (starting with $z=0$) by the function $\fx_3$ given by
\begin{romenumerate}
\item 
If $z \ge \frac{\ga}{\gb}-1$, then output $BA$ and let
\begin{equation}
  \fx_3(z):=\frac{1-2\ga}{1-\ga}+\frac{\ga\gb }{(1-\ga)(1-\gb)}z.
\end{equation}
\item 
If $z \le \frac{\ga}{\gb}-1$, then output $A$ and let
\begin{equation}
  \fx_3(z):=z+1.
\end{equation}
\end{romenumerate}

\subsubsection{Fourth dynamical system}\label{SSS4}
We replace $z$ by $w:=\ga/\gb-z$ and obtain the dynamical system
(starting with $w=\ga/\gb$)
given by the function $\fx_4$
defined by:
\begin{romenumerate}
\item \label{d4a}
If $w \le 1$, then output $BA$ and let
\begin{equation}\label{w41}
  \begin{split}
\fx_4(w)&:=\frac{\ga}\gb-\frac{1-2\ga}{1-\ga}
  -\frac{\ga\gb}{(1-\ga)(1-\gb)}\Bigpar{\frac{\ga}\gb-w}
\\&\phantom:
=
\frac{\ga}\gb+\frac{\ga}{1-\ga}-1-\frac{\ga^2}{(1-\ga)(1-\gb)}
+\frac{\ga\gb}{(1-\ga)(1-\gb)}w
.	
  \end{split}
\end{equation}
\item \label{d4b}
If $w \ge 1$, then output $A$ and let
\begin{equation}\label{w42}
  \fx_4(w):=w-1.
\end{equation}
\end{romenumerate}
In other words,
\begin{subnumcases}{\fx_4(w)=}  
	\ax w+\byx, & $w\le 1$,\label{fx4<}
\\
w-1, & $w\ge 1$,\label{fx4>}
  \end{subnumcases}
where
\begin{align}\label{ax}
\ax&
=\frac{\ga\gb}{(1-\ga)(1-\gb)}
=\frac{\ga\gb}{(\ga+\gab)(\gb+\gab)}
\in (0,1],
\\ \label{byx}
  \byx &= 
\frac{\ga}\gb+\frac{\ga}{1-\ga}-1-\frac{\ga^2}{(1-\ga)(1-\gb)}
=
\frac{\ga-\gb}\gb+\frac{\ga(1-\ga-\gb)}{(1-\ga)(1-\gb)}>0.
\end{align}

Note that $\ax<1$ unless $\gab=0$ (in which case \phragmen's method
reduces to D'Hondt's, as said above).
On the other hand, $\byx$ can be
arbitrarily large; we define $\bx:=\frax{\byx}\in\oio$ and
$\bz:=\floor\byx$.

Note also that 
$0<\fx_4(0)=\byx<\fx_4(1-)=\ax+\byx$ and that
\begin{equation}\label{gam+gd}
  \ax+\byx = 
\frac{\ga-\gb}\gb+\frac{\ga(1-\ga-\gb)+\ga\gb}{(1-\ga)(1-\gb)}
=
\frac{\ga}{\gb(1-\gb)}-1.
\end{equation}

\subsubsection{Final (fifth) dynamical system}

We can reformulate the dynamical system once more by combining 
each $BA$ move \eqref{fx4<} 
with all following $A$ moves 
\eqref{fx4>}. 
This yield the
dynamical system acting on $w\in\oi$ by the function $\fx_5:\oi\to\oi$ 
given by
\begin{align}\label{fx5}
  \fx_5(w):=\frax{\fx_4(w)}
=\frax{\ax w+\byx}
=\frax{\ax w+\bx}
\end{align}
with the output $BA^k$ where
\begin{equation}
  \label{fx5k}
k:=1+\floor{\fx_4(w)}=1+\floor{\ax w+\byx}
=1+\bz+\floor{\ax w+\bx},
\end{equation}
except that
in the indeterminate case
when $\ax w+\byx$ is an integer, we also allow 
$\fx_5(w)=\fraxx{\ax w+\bx}=1$ with $k:=\ax w+\byx$.

Thus $f_5(w)=f_\pm(w)$, the multi-valued function studied in the present
paper, with $a$ and $b:=\frax{\byx}$ given by \eqref{ax}--\eqref{byx}.
Furthermore, \eqref{fx5k} can be written, using \eqref{fx5}
and defining the symbol $\eps\in\setoi$ as in \eqref{eps2},
\begin{equation}\label{keps}
  k:= 1+\bz +\ax w+\bx -f_5(w)
= 1+\bz +\eps.
\end{equation}
Note that this includes both possibilities in the indeterminate case. 

The dynamical system really starts with $w=\ga/\gb$, which outputs $A$
$\floor{\ga/\gb}$ times  
before the first $B$
(or possibly one less, if $\ga/\gb$ is an integer), 
so in the version using $\fx_5$, we start with an initial
output $A^\ell$ with $\ell:=\floor{\ga/\gb}$ and then run the dynamical
system $f_\pm$ starting with $w=w_0:=\frax{\ga/\gb}$
(possibly modified if $\ga/\gb$ is an integer); the output is by
\eqref{keps} given by $BA^{1+\bz+\eps_i}$ for each symbol $\eps_i$ in the
symbolic sequence.
In other words, after the initial $A$'s, the output is obtain from the
symbolic sequence by the substitutions
\begin{equation}\label{epstoAB}
  0 \to BA^{\bz+1},
\qquad
1\to B A^{\bz+2}.
\end{equation}

\begin{example}\label{Egb=0}
  The case $\ga>\gb=0$ was excluded above. In this case, it is easily seen
  that every seat goes to $A$. Thus $n_A=n$ for any $n$. In particular,
  $n_A/n\to p_A=1$. (This can be seen as \eqref{pB} with $\bz=\infty$.)
\end{example}

\begin{example}\label{E==}
  The case $\ga=\gb$ was also excluded above. In this case, if $\ga=\gb>0$
  and $\gab>0$,
it is easily seen that the first seat goes to either $A$ or $B$, and all
following seats alternate between the two parties; hence 
$|n_A-n_B|\le1$.
In particular,   $n_A/n\to p_A=1/2$. 

In the extreme case $\ga=\gb=1/2$ and $\gab=0$, there is a tie at every
second seat; the first two seats go to either $AB$ or $BA$, and the same
holds for each following pair of seats; however, the order within each pair
is arbitrary. Hence \refT{Tphr}\ref{TphrQ} does not hold if, for example, ties
are resolved by lot. (However, it holds if ties always are resolved in
favour of, say, $A$.)
Nevertheless, in any case
we still have $|n_A-n_B|\le1$.

In the opposite extreme case $\ga=\gb=0$, so all votes are for $AB$ (and
thus $\gab=1$), 
every seat is a tie. If the ties are resolved by lot, then almost surely the
proportion $n_A/n\to p_A=1/2$, but other resolution rules may give \eg{} all
seats to $A$ (or $B$).
\end{example}

\begin{example}  \label{Egab=0}
The case $\gab=0$ is not excluded above; if  $\ga>\gb>0$ and $\gab=0$, then
\phragmen's method is still described by the dynamical system $f_5$ and
\eqref{epstoAB}. 
However, in this case \eqref{ax} yields $\ax=1$, and thus
$f_\pm(x)=\frax{x+\bx}$ (or $\fraxx{x+\bx}$), which is the
limiting case of a rotation on the circle mentioned in \refR{Rac}.
Our results in the preceding sections do not include this (simple) case,
but it is easy to see from \eqref{eps2} that \refT{Teps} still holds, with
the rotation number $\rho=\bx$.

Furthermore, since now $\ga+\gb=1$, \eqref{tbyx} yields
\begin{equation}\label{b00}
  \byx
=
\frac{\ga-\gb}\gb+\frac{\ga(1-\ga-\gb)}{\gb\ga}
=\frac{1-2\gb}{\gb}  
=\frac{1}{\gb}  -2.
\end{equation}
and thus $\bx=\frax{\byx}=\frax{1/\gb}$.
Since the dynamical system starts with
$w=\frax{\ga/\gb}=\frax{(1-\gb)/\gb}=\frax{1/\gb}$, it follows that 
$f_\pm^n(w)=\frax{(n+1)/\gb}$ or $\fraxx{(n+1)/\gb}$; hence,
if $\gb=p/q$ is rational, then there is a choice at each $p$:th
iteration. Hence, if e.g.\ the choices are made by lot, the orbit is \as{}
\emph{not} periodic. (We are in an orbit that is periodic except that each
$p$:th term is either 0 or 1, but these may be chosen arbitrarily.)
This is in stark contrast to the case $\ax<1$ studied in the present paper,
see for example \refL{LA01} and
\refT{TRbc}, and we see that
\refT{Tphr}\ref{TphrQ} does not hold when $\gab=0$. (Note that in this
case, $\rho=\bx\in\bbQ\iff\gb\in\bbQ$ by \eqref{b00}.)

Note that the same behaviour was found for $\gab=0$ and $\ga=\gb$ in \refE{E==}.
\end{example}

\subsection{Proof of \refT{Tphr}}
We consider several cases, and begin with the main case.
By symmetry, it suffices to consider $\ga\ge\gb$.

\pfcase{$\ga>\gb>0$ and $\gab>0$}
In this case, 
\phragmen's 
election method is described by the dynamical system $f_5=f_\pm$ as
described above.
Note that $\ax<1$ by \eqref{ax}.
Let $S_m:=\sum_{i=0}^{m-1}\eps_i$, where $\eps_i$ is the symbolic sequence
defined in \refSS{SSsymbolic}.
Let $m\ge0$ and suppose that at some stage of the election, $n_B=m$. This
means that we are in the $m$th iteration of the dynamical system; in other
words, we have so far made $m$ substitutions \eqref{epstoAB}, except that
the last may be incomplete. Taking into account also the initial string of
$A$'s, we obtain
\begin{equation}
  n_A=\sum_{i=0}^{m-1} (\bz+1+\eps_i)+O(1)=(\bz+1)m+S_m+O(1).
\end{equation}
Consequently, letting $\rho=\rho(f_\pm)$ be the rotation number of
\eqref{faxbx},
\refT{Teps} yields
\begin{equation}
  n_A=(\bz+1)m+\rho m+O(1),
\end{equation}
 which together with our assumption $n_B=m$ yields
 \begin{equation}
   n=n_A+n_B=(2+\bz+\rho) m + O(1)
 \end{equation}
and thus
 \begin{equation}\label{nBO}
n_B=m = \frac{n}{2+\bz+\rho}+O(1).
 \end{equation}
Consequently,
\begin{equation}
  \frac{n_B}{n}=\frac{1}{2+\bz+\rho} + O\Bigparfrac1{n},
\end{equation}
which shows both the existence of the limit $p_B$ as \ntoo, and its value
\eqref{pB} in \ref{TphrpB}.
Furthermore, obviously $n_A/n\to p_A:=1-p_B$,

\ref{TphrO} follows from \eqref{nBO}.

Finally, if $\rho$ is rational, then the symbolic sequence is eventually
periodic by \refT{TRbc}, and thus so is the sequence of awarded seats by
\eqref{epstoAB}, showing \ref{TphrQ}.

This completes the proof in \refpfcase.

\pfcase{$\ga>\gb>0$ and $\gab=0$}
As said in \refE{Egab=0},
we can use the dynamical system $f_5$ above in this case too; the only
difference from the preceding case is that now 
\eqref{tax} yields $\ax=1$, but \refT{Teps} still holds and \ref{TphrO} and
\ref{TphrpB} follow as above.
However, as noted in \refE{Egab=0}, \ref{TphrQ} does not always hold.

In this case, all votes are for $A$ or $B$, and \phragmen's method reduces
to D'Hondt's. The results can also easily be shown directly, see \eg{}
\cite{SJ262}.
Note that in this case, $\rho=\bx$ and thus, by \eqref{tbx}--\eqref{tbz} and
\eqref{b00},
$2+\bz+\rho=2+\byx=\gb\qw$; hence \eqref{pB} yields $p_B=\gb$.
In other words, when $\gab=0$,
the proportion of seats for a party converges to its
proportion of the votes, as said earlier.

\pfcase{$\ga>\gb=0$}
Trivial by \refE{Egb=0}, with $p_A=1$ and $p_B=0$.

\pfcase{$\ga=\gb>0$}
By \refE{E==}, \ref{TphrO}  holds, with $p_A=p_B=1/2$,
and if $\gab>0$, then also \ref{TphrQ} holds.
Furthermore, \eqref{gam+gd} yields
\begin{equation}
  \ax+\byx 
=\frac{1}{1-\gb}-1
=\frac{\gb}{1-\gb}
=\frac{\ga}{\ga+\gab}\le1.
\end{equation}
In particular, $\byx<1$ and thus $\bz=0$. Furthermore, 
$\ax+\bx\le1$, and thus the rotation number $\rho=0$, see
\refE{E00}.
Consequently, \eqref{pB} holds too.
\qed

\subsection{Further results}

We combine \refT{Tphr} with the result by 
\citet{laurent-nogueira} on rational rotation numbers
quoted above as \refT{Talgebraic}, and obtain the following.

\begin{theorem}\label{Tphr-alg}
  Consider the party version of \phragmen's election method with two parties.
If, with notation as in \refT{Tphr}, the proportions $\ga,\gb,\gab$
are algebraic numbers (in particular, if they are rational), and $0<\gab<1$,
then the sequence of awarded seats is eventually periodic.
In particular, the proportions $n_A/n$ and $n_B/n$ of seats given to each
party converge to rational numbers.
\end{theorem}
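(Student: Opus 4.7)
The plan is to combine Theorem \ref{Tphr} with the algebraicity theorem of Laurent--Nogueira quoted as \refT{Talgebraic}. By the symmetry of \phragmen's method in the roles of $A$ and $B$, we may assume $\ga\ge\gb$. We first dispose of a few degenerate cases: if $\gb=0$, then by \refE{Egb=0} every seat goes to $A$, so the sequence is trivially periodic; if $\ga=\gb>0$ and $\gab>0$, then by \refE{E==} the sequence eventually alternates between $A$ and $B$, hence is eventually periodic. In either case $p_A$ and $p_B$ are rational.

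In the main case $\ga>\gb>0$ with $0<\gab<1$, the hypotheses of \refT{Tphr} are satisfied, so the sequence of awarded seats is governed by the dynamical system $f_\pm(x)=\set{\frax{ax+b},\fraxx{ax+b}}$ with $a$ and $b$ defined by \eqref{tax}--\eqref{tbx}. The first step is to observe that $a$ and $b$ are algebraic whenever $\ga$ and $\gb$ are: the formulas \eqref{tax} and \eqref{tbyx} express $a$ and the auxiliary quantity $\byx$ as rational functions of $\ga,\gb$ with coefficients in $\bbQ$, so both lie in the field $\bbQ(\ga,\gb)$, which consists entirely of algebraic numbers; moreover $b=\frax{\byx}=\byx-\bz$ differs from $\byx$ by an integer, so $b$ is algebraic as well. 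Since $\gab>0$, \eqref{tax} gives $a<1$, so the standing hypothesis $0<a<1$ of the present paper is met.

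By \refT{Talgebraic} the rotation number $\rho=\rho(f_\pm)$ is therefore rational. Now \refT{Tphr}\ref{TphrQ}, whose hypothesis requires exactly that $\rho$ is rational and $\gab>0$, yields that the sequence of awarded seats is eventually periodic. Finally, \refT{Tphr}\ref{TphrpB} gives
\begin{equation*}
p_B=\frac{1}{2+\bz+\rho},
\end{equation*}
and since $\bz\in\bbZ$ and $\rho\in\bbQ$, both $p_B$ and $p_A=1-p_B$ are rational.

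The argument is essentially assembly: the only point that needs attention is checking that the transformation \eqref{tax}--\eqref{tbx} from the election parameters $(\ga,\gb)$ to the dynamical-system parameters $(a,b)$ preserves algebraicity (so that \refT{Talgebraic} applies) and that the integer reduction $\byx\mapsto b=\frax{\byx}$ does not destroy this property; both are immediate from the formulas. The only mild obstacle is remembering to treat the boundary cases $\gb=0$ and $\ga=\gb$ separately, since \refT{Tphr} in its stated form assumes $\ga\ge\gb>0$, but these cases are handled directly by Examples \ref{Egb=0} and \ref{E==}.
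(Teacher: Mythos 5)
Your proposal is correct and follows essentially the same route as the paper: reduce by symmetry, dispose of the degenerate boundary cases, observe that $a$ and $\byx$ (hence $b$) are algebraic and that $0<a<1$ because $\gab>0$, invoke \refT{Talgebraic} to get rationality of $\rho$, and conclude via \refT{Tphr}\ref{TphrQ} and \eqref{pB}. The only cosmetic difference is that you treat $\ga=\gb$ as a separate case via \refE{E==}, whereas the paper simply applies \refT{Tphr}\ref{TphrpB}--\ref{TphrQ} under the hypothesis $\ga\ge\gb>0$, which already covers it.
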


\begin{proof}
  By symmetry, we may assume $\ga\ge\gb$. The case $\gb=0$ is trivial by
  \refE{Egb=0} (all seats go to $A$); hence we may assume $\ga\ge\gb>0$, so
  \refT{Tphr}\ref{TphrpB} applies. The numbers $a$ and $\byx$ in
  \eqref{tax}--\eqref{tbyx} are algebraic, and thus so is $b$ by
  \eqref{tbx}. Furthermore, $0<a<1$ since $\gab>0$.
Hence, \refT{Talgebraic} 
applies and shows that
$\rho$ is rational. The proof is completed by \refT{Tphr}\ref{TphrQ}.
\end{proof}

\begin{remark}
Of course, in a real election, with integer numbers of votes,
the proportions of votes are always rational. (Unless
votes are weighted, 
and even then the proportions are rational or algebraic unless some weight
is transcendental.)
However, we are studying an idealized mathematical situation
(where we may let \ntoo), and then it is
natural to allow arbitrary real numbers $\ga$ and $\gb$ (with $\ga,\gb\ge0$
and $\ga+\gb\le1$).
\end{remark}

\begin{example}
  \label{E1/2}
When is $p_A=p_B=1/2$?
By symmetry we may assume $\ga\ge\gb$. 
Then
$\gb>0$ is necessary by \refE{Egb=0}, and thus
 \eqref{pB} shows that
$p_B=1/2$ if and only if 
$\bz+\rho=0$, i.e., 
if and only if
$\bz=0$ and $\rho=0$. 
By \refE{E00}, $\rho=0\iff \ax+\bx\le1$, and thus, 
using also \eqref{tbx}--\eqref{tbz} and \eqref{gam+gd}, 
\begin{equation}\label{pB1/2a}
  p_B=\frac12
\iff \bz=0 \text{ and }\ax+\bx\le1
\iff \ax+\byx\le1
\iff \ga\le 2\gb(1-\gb).
\end{equation}
By symmetry, if $\ga\le\gb$, then $p_B=1/2\iff \gb\le2\ga(1-\ga)$.

We may note that if $\ga\ge\gb$, then either $\ga\le\frac12$ and then
$\gb\le\ga\le2\ga(1-\ga)$, or $\ga\ge\frac12$ and then 
$\gb\le1-\ga\le2\ga(1-\ga)$; 
thus $\gb\le2\ga(1-\ga)$ always holds when
$\ga\le\gb$.
Hence, using symmetry again, we see that 
\begin{equation}\label{pB1/2}
p_B=\frac12 \iff
 \ga\le 2\gb(1-\gb) \text{ and }\gb\le2\ga(1-\ga), 
\end{equation}
as always excluding the case
$\ga=\gb=0$.

Given $\zeta$ with $0\le\zeta<1$, a simple calculation using \eqref{pB1/2a}
shows that
\begin{equation}\label{pB1/2z}
p_B=\frac12 \iff
\frac{3-\sqrt{1+8\gab}}4\le \ga \le \frac{1-4\gab+\sqrt{1+8\gab}}4.
\end{equation}
\end{example}

If $p_B=\frac12$ and $\zeta>0$, then
the sequence of awarded seats is eventually periodic by
\refT{Tphr}; furthermore, \eqref{epstoAB} shows that the sequence is
eventually alternating between the two parties.
In fact, in this simple special case,
the sequence alternates from the beginning.

\begin{theorem}\label{T1/2}
  Consider the party version of \phragmen's election method with two
  parties,
with the notations in \refT{Tphr}. If the conditions in \eqref{pB1/2} hold
and $0<\zeta<1$, then 
the seats are awarded alternatingly to $A$ and $B$ (starting with $A$ if
$\ga>\gb$, and with $B$ if $\gb>\ga$).  
\end{theorem}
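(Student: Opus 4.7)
The plan is to reduce the statement to bookkeeping inside the dynamical-system translation already developed in the proof of \refT{Tphr}. By the $A\leftrightarrow B$ symmetry it suffices to treat $\ga>\gb$, in which case one first observes that $\gb>0$ (if $\gb=0$, then the inequality $\ga\le 2\gb(1-\gb)$ in \eqref{pB1/2} would force $\ga=0$, contradicting $\zeta<1$). Under this assumption, the construction in \refSS{SSphr-dynamical}--\refSS{SSS4} shows that the awarded sequence is produced by the dynamical system $f_\pm$ on $[0,1]$ with parameters $\ax,\bx$ from \eqref{tax}--\eqref{tbx}, started at $w_0:=\frax{\ga/\gb}$, and emitting an initial block $A^\ell$ with $\ell:=\floor{\ga/\gb}$, followed by blocks $BA^{1+\bz+\eps_i}$ for $i=0,1,\dots$ determined via \eqref{epstoAB} by the symbolic sequence $(\eps_i)$ of the orbit.

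The strategy is then to show, under the present hypotheses, that (i) $\ell=1$, (ii) $\bz=0$, and (iii) $\eps_i=0$ for every $i$. For (i) and (ii): the hypothesis $p_B=1/2$ with $\ga>\gb$ gives via \eqref{pB1/2a} the two conclusions $\ax+\byx\le 1$ and (using also $\ga>\gb$) $1<\ga/\gb\le 2(1-\gb)<2$, so $\ell=1$ and $w_0=\ga/\gb-1\in(0,1)$; moreover $\bz=\floor{\byx}=0$, whence $\bx=\byx$ and $\ax+\bx\le 1$. By \refE{E00} this last inequality is equivalent to $\rho=\rho(f_\pm)=0$, which already tells us that $\sum_{i<n}\eps_i=O(1)$ by \refT{Teps}, but we need the stronger pointwise statement (iii).

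For (iii), the inequality $\ax+\bx\le 1$ means that the discontinuity $\tau=(1-\bx)/\ax$ satisfies $\tau\ge 1$, so on $[0,1)$ the map $f_\pm$ coincides with the affine contraction $x\mapsto \ax x+\bx$, which maps $[0,1)$ into itself (the fixed point $\bx/(1-\ax)\le 1$ bounds the orbit). Since $w_0\in(0,1)$, the whole orbit stays in $[0,1)$ and never incurs a wrap-around, so \eqref{eps2} gives $\eps_i=0$ for all $i$. Combining (i)--(iii) in \eqref{epstoAB}, every block after the single initial $A$ is $BA^{1+0+0}=BA$, giving the sequence $A,BA,BA,BA,\dots=ABAB\cdots$ as desired; the case $\gb>\ga$ follows by the symmetric argument. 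The only delicate point, which I would flag as the main obstacle, is verifying that $w_0\in(0,1)$ strictly (rather than $w_0=0$, in which case one must be careful that the initial block has the correct length); this is exactly what the strict inequality $\ga>\gb$ combined with $\ga<2\gb$ secures.
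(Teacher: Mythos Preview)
Your proof is correct and follows essentially the same route as the paper: reduce to the dynamical system $f_\pm$, show $\ell=1$, $\bz=0$, and $\eps_i\equiv0$ via $\ax+\bx\le1$ so that \eqref{epstoAB} outputs $A(BA)^\infty$. The only omission is that your appeal to the $A\leftrightarrow B$ symmetry does not cover the case $\ga=\gb$ (which does satisfy the hypotheses when $0<\ga=\gb\le\tfrac12$ and $\zeta>0$); the paper dispatches this case in one line by \refE{E==}, and you should do likewise.
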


\begin{proof}
The assumptions imply $\ga,\gb>0$, and the case $\ga=\gb$ follows by
\refE{E==}; hence we may, again using symmetry, 
assume $\ga>\gb>0$.
Then \phragmen's method 
is described by the dynamical system $f_5=f_\pm$ above,
starting at $w_0:=\frax{\ga/\gb}$,
after an initial $A^{\floor{\ga/\gb}}$.
We have, using \eqref{pB1/2},
$\gb<\ga\le2\gb(1-\gb)<2\gb$ and thus $1<\ga/\gb<2$.
Hence $\floor{\ga/\gb}=1$, and 
$0<w_0<1$.
Thus the first seat goes to $A$, and then we run $f_5$ starting at $w_0$.
We have $0<a<1$, and $a+b\le1$ since $\rho=0$ (see Example \ref{E00} or
\ref{E1/2}).
In the case, at most one symbol $\eps_i\neq0$, 
see Examples \ref{E<1}, \ref{E1}, \ref{E0} and \refSS{SSsymbolic}; 
furthermore, it is
easy to see that a non-zero $\eps_i$ can occur only in an orbit starting at
1 (if $a+b=1$) or 0 (if $b=0$), but this is not the case here since
$0<w_0<1$.
Thus, $\eps_i=0$ for all $i$, and thus \eqref{epstoAB} shows that the output
sequence is $A(BA)^\infty$. 
\end{proof}

If $p_B=1/2$ and $\zeta=0$, then \eqref{pB1/2} (or \refE{Egab=0})
implies that $\ga=\gb=1/2$; this case is treated in
\refE{E==}. As shown there, the sequence of elected seats is not
necessarily periodic in this case, because of ties.
Hence, \refT{T1/2} does not extend to $\zeta=0$.

\begin{remark}\label{Ebad}
  The  result in \refT{T1/2} is both surprising and unsatisfactory from the
  point of view of applications.
For example, if 40\% of the votes are for $A$, 30\% for $B$ and 30\% for
$AB$,
then \refT{T1/2} applies and shows that the seats are awarded $ABAB\dots$;
hence, for any even number of seats, $A$ and $B$ get equally many, in spite
of the fact that $A$ has substantially more votes than $B$.
\end{remark}

\begin{example}
  \label{E1/3}
When is $p_B=1/3$?
This cannot happen if $\gb>\ga$ or if $\gb=0$; thus $\ga\ge\gb>0$.
Hence, \eqref{pB} yields $\bz+\rho=1$, and thus (recalling that $\bz$ is an
integer),
$\bz=1$ and $\rho=0$.
Again, by \refE{E00}, $\rho=0\iff a+b\le1$. Furthermore, by
\eqref{tbx}--\eqref{tbz}, 
$\byx=\bz+b$, and thus,
using \eqref{tbyx} and \eqref{gam+gd},
for $\ga\ge\gb$,  
\begin{equation}\label{pB1/3}
  \begin{split}
  p_B=\frac13
&\iff \bz=1 \text{ and }\ax+\bx\le1
\iff \byx\ge1 \text{ and }\ax+\byx\le2
\\&
\iff \ga-2\gb-\ga^2+2\ga\gb+2\gb^2-3\ga\gb^2\ge0 \text{ and } \ga\le 3\gb(1-\gb)
. \end{split}
\end{equation}
\end{example}

\begin{example}
  \label{E2/5}
When is $p_B=2/5$?
We need $\ga>\gb>0$. Furthermore, \eqref{pB} yields $\bz+\rho=1/2$, i.e.,
$\bz=0$ and $\rho=1/2$.
Assume $\gab>0$, so $0<a<1$.
Using  \eqref{rho=1/2} in \refE{Erho=1/2}, we obtain,
assuming  $\ga\ge\gb$,
\begin{equation}
  \rho=\frac{2}5
\iff
\frac{1}{1+a}\le \byx\le \frac{1+a-a^2}{1+a}
\iff
1\le (1+a)\byx\le 1+a-a^2,
\end{equation}
with $a$ and $\byx$ given by \eqref{tax} and \eqref{tbyx}.
This can be expressed as two polynomial inequalities in $\ga$ and $\gb$, 
with one polynomial of degree 5 and one of degree 4; 
we omit the details.


\end{example}

Similarly, for any given rational $p\in(0,\frac12)$, 
one can see that
$p_B=p$ is equivalent to a few polynomial inequalities in $\ga$ and $\gb$,
but it seems that the degrees of the polynomials increase with the
denominator of $p$.

\section{Thiele's method}\label{Sthiele}\noindent

\subsection{Definition of Thiele's method}\label{SSthiele-algo}
Thiele's election method has a simple (and rather intuitive) formulation:
\begin{metod}{Thiele's election method}
Seats are awarded sequentially, and in each round, each ballot is counted as
$1/(\qn+1)$ for each name on it, where $\qn$ is the number of candidates on that
ballot that already have been elected.
\end{metod}

As with \phragmen's method, we consider the party version, where each ballot
contains a set of parties, and each party may get an arbitrary number of
seats;
then $\qn$ is counted with repetitions, \ie, $\qn$ is the number of seats that
so far have been awarded to the parties on the ballot.

We can rephrase  Thiele's method in  
the following form,
similar to the formulation of \phragmen's method in \refSS{SSphr-algo}.
As above, 
let $v_\gs$ be the number of votes for the set $\gs$ of candidates (parties).
The numbers $n_\gs$ defined below
will be the numbers of already elected on the different 
ballots (denoted $\qn$ in the description above).
\begin{romenumerate}
\item 
Initially all  $n_\gs=0$.  
\item \label{th-wi}
The reduced vote for candidate $i$ is defined as
\begin{equation}\label{thwi}
  W_i:=\sum_{\sigma\ni i}\frac{v_\sigma}{1+n_\sigma}.
\end{equation}
\item 
The candidate $i$ with the largest $W_i$ is elected to the next seat, breaking
ties by lot or some other method.
(In the original version, only unelected candidates are considered.
In the party version, repetitions are allowed.)
\item \label{th-update}
If $i$ is elected, then $n_\gs$ is updated for every $\gs\ni i$ (i.e., for
the ballots that contributed to the election of $i$); the new value is
\begin{equation}
  n_\gs':=n_\gs+1.
\end{equation}
$n_\gs$ remains unchanged when $\gs\not\ni i$.\\
Repeat from \ref{phr-wi}.
\end{romenumerate}

The difference from \phragmen's method is thus that the reduction of votes
in \eqref{thwi} is done in a different way.

\begin{remark}\label{RThall}
  A ballot voting for all parties will give the same contribution to
  everyone, and thus does not influence the result. In other words, with
  Thiele's method, ballots containing all parties can be ignored, just as
  blank votes.
  \end{remark}

\subsection{Main results for Thiele's method}

We assume as in \refSS{SSphr-dynamical} that we are given a set $\cP$ of
parties, and some numbers $v_\gs$ of votes on the sets $\gs\subseteq\cP$.
We let $n\ge1$ seats by distributed by Thiele's method, 
and let $n_i$ be the number of seats received by party $i\in\cP$. 
We also let
$p_{i}:=n_i/n$, the fraction of the seats received by $i$, and we define for
a set $\gs\subseteq\cP$ the sums
\begin{equation}
  n_\gs:=\sum_{i\in\gs}n_i,
\qquad
  p_\gs:=\sum_{i\in\gs}p_i = n_\gs/n.
\end{equation}
(These quantities all depend on $n$, but we do not show this in the
notation.)

We let $N=|\cP|$, the number of parties, and assume for notational
convenience
that $\cP=\set{1,\dots,N}$.
We let
$\vp=\vp_n:=(p_{1},\dots,p_{N})$, the vector of proportions of seats given
to the different parties.
Note that $\vp$ belongs to the simplex
\begin{equation}
\fS=\fS_N:=\Bigset{(x_1,\dots,x_N): x_i\ge0 \text{ and } \sum_{i=1}^N x_i=1}.  
\end{equation}
Let $\fSo:=\bigset{(x_1,\dots,x_N)\in\fS: x_i>0 \text{ for all $i$}}$,
the corresponding open simplex.

The following  two
theorems give conditions that guarantee that the vector $\vp$
converges, and provide a method to find the limit by solving a system of
(non-linear) equations.
\refT{TT} is more general, but its condition may be less easy to verify;
\refT{Tvi} has a simple condition that
still covers most cases of interest.
Furthermore, we give the even more general \refT{TT2} below, with a
different characterization of the limit.
The proofs of the results below are given in the next subsection.

\begin{theorem}
  \label{TT}
Consider Thiele's method for a set 
$\cP=\set{1,\dots,N}$ of $N$ parties 
with some given
numbers of votes $\set{v_\gs}_{\gs\subseteq\cP}$.
For a vector $(x_1,\dots,x_N)$,  define
\begin{equation}\label{xpi}
x_\sigma:=\sum_{i\in\sigma} x_i,
\qquad \gs\subseteq\cP.
\end{equation}
If, using \eqref{xpi},  the system of $N-1$ equations
\begin{gather}
  \sum_{\sigma\ni 1}\frac{v_\sigma}{x_\sigma}
=
  \sum_{\sigma\ni 2}\frac{v_\sigma}{x_\sigma}
=  \dots
=
  \sum_{\sigma\ni N}\frac{v_\sigma}{x_\sigma}
\label{tt1}
\end{gather}
has a unique solution $\vx_0$ in the open simplex $\fSo$,
then $\vp_n\to\vx_0$ as \ntoo.
\end{theorem}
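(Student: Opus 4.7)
The plan is to cast Thiele's method as a discrete dynamical system on the simplex $\fS$ whose limiting behavior is governed by the concave Lyapunov potential
\[
G(\vx) := \sum_\sigma v_\sigma \ln x_\sigma, \qquad \vx \in \fSo.
\]
The function $G$ is concave on $\fSo$ since $\ln$ is concave and each $x_\sigma$ is a non-negative linear form in the $x_i$'s. A direct computation gives $\partial_i G(\vx) = \sum_{\sigma \ni i} v_\sigma/x_\sigma =: W_i(\vx)$, so system~\eqref{tt1} is precisely the Karush--Kuhn--Tucker equality $W_1 = \cdots = W_N$ (with common value the Lagrange multiplier) for the constrained maximization $\max_{\vx \in \fS} G(\vx)$. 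The hypothesis that $\vx_0 \in \fSo$ is the unique such solution, combined with concavity of $G$ and a standard convex-combination argument, shows that $\vx_0$ is in fact the unique maximizer of $G$ over all of $\fS$.

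Next, I would rewrite Thiele's method as the recursion
\[
\vp_{n+1} = \vp_n + \frac{1}{n+1}\bigl(e_{\ix_{n+1}} - \vp_n\bigr),
\]
where $\ix_{n+1} \in \arg\max_i \widehat W_i^{(n)}$ and $\widehat W_i^{(n)} := \sum_{\sigma \ni i} v_\sigma/(1 + n^{(n)}_\sigma)$ is the reduced vote from~\eqref{thwi}. For $\vp_n$ bounded away from the boundary of $\fS$ one checks $n \widehat W_i^{(n)} = W_i(\vp_n) + O(1/n)$, so the recursion is an Euler-type discretization (with vanishing step $1/(n+1)$) of the differential inclusion $\dot\vp \in \Phi(\vp) - \vp$, where $\Phi(\vp)$ is the convex hull of $\{e_i : i \in \arg\max_j W_j(\vp)\}$ inside $\fS$.

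The Lyapunov identity is the engine of the proof. Since $\sum_i p_i W_i(\vp) = \sum_\sigma v_\sigma \sum_{i \in \sigma} p_i/p_\sigma = V$ for every $\vp \in \fSo$ (where $V := \sum_\sigma v_\sigma$), along any solution of the differential inclusion we have
\[
\frac{d}{dt} G(\vp) = \sum_i W_i(\vp)\,\dot p_i = \max_i W_i(\vp) - V \ge 0,
\]
with equality if and only if all $W_i(\vp)$ coincide, which within $\fSo$ characterizes $\vp = \vx_0$ by the uniqueness hypothesis. Hence $G$ is a strict Lyapunov function on $\fSo$ and $\vx_0$ is the globally asymptotically stable equilibrium of the limit flow.

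The final step is to transfer this ODE convergence to the discrete sequence $(\vp_n)$, and this is where the main difficulty lies. Three technicalities must be addressed: (i) one must show that $\vp_n$ stays in a compact subset of $\fSo$ so that the Lyapunov identity applies uniformly; (ii) ties in $\arg\max_i \widehat W_i^{(n)}$ must be handled; and (iii) the $O(1/n)$ discrepancy between $n \widehat W_i^{(n)}$ and $W_i(\vp_n)$ must be controlled. With these in place, a standard asymptotic-pseudotrajectory argument (in the spirit of Bena\"im--Hirsch for stochastic approximation schemes, here applied to a deterministic scheme) shows that every limit point of $(\vp_n)$ lies in the chain-recurrent set of the limit flow, which by the Lyapunov analysis is $\{\vx_0\}$. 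Point (i) is the most delicate: if $p_{i,n} \to 0$ along some subsequence with limit $\vy$, uniqueness of $\vx_0$ in $\fSo$ forces $G(\vy) < G(\vx_0)$, and this must be ruled out via the asymptotic monotonicity of $G(\vp_n)$ (valid up to $O(1/n^2)$ errors in the discrete scheme).
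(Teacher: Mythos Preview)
Your approach and the paper's are built on the same core idea: the concave potential $G(\vx)=\sum_\sigma v_\sigma\ln x_\sigma$ (the paper calls it $\psi$) serves as a Lyapunov function, its gradient components are exactly the $W_i$, and the uniqueness hypothesis on \eqref{tt1} pins down the unique maximizer $\vx_0$ of $G$ on $\fS$. Your identity $\sum_i p_iW_i(\vp)=V$ is the paper's observation that $\sum_i x_i\partial_i\psi=v^*$, and your Lyapunov inequality $\dot G=\max_iW_i-V\ge0$ is the continuous-time analogue of the paper's key increment estimate $\psi(\vp_{n+1})-\psi(\vp_n)=\tfrac1n\max_i\partial^*_i\psi(\vp_n)+\text{error}$. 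The paper then argues directly on the discrete sequence (if $\vp_n$ stays outside a neighbourhood of $\cM$, $\psi$ increases by at least $c/n$, which diverges), rather than passing through an ODE limit and pseudotrajectory machinery; but conceptually the two arguments are the same.

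The genuine gap in your sketch is the boundary control, point~(i). Your claimed discrete error is $O(1/n^2)$, but in fact the Taylor remainder is $O(1/n_*^2)$ with $n_*:=\min_{\sigma\in\PP}n_\sigma$, and a~priori $n_*$ could grow much slower than $n$ (or not at all). Your proposed resolution is circular: you want to rule out $\vp_n\to\partial\fS$ using asymptotic monotonicity of $G(\vp_n)$, but that monotonicity is only established once the error $O(1/n_*^2)$ is dominated by the drift $c/n$, which in turn needs control on $n_*$. The paper breaks this circle in two steps. First, a short self-contained argument (Lemma~\ref{Lnx}) shows $n_*\to\infty$ without using $\psi$: if some $n_\sigma$ stayed bounded, the parties in $\sigma$ would eventually have the largest $W_i$ and would keep winning seats, contradiction. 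Second, the increment lemma (Lemma~\ref{Linc}) splits into two regimes: when $n_*\ge n^{3/4}$ the error $O(1/n_*^2)=O(n^{-3/2})$ is negligible against the drift $c/n$; when $n_*<n^{3/4}$ one uses directly that $\partial_i\psi(\vp)\ge v_\sigma\,n/n_*$ is huge for $i\in\sigma$ with $n_\sigma=n_*$, so $\psi$ increases by order $1/n_*\gg1/n$ anyway. Once you insert this two-regime argument (or something equivalent) your outline goes through.
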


Note that if $\vx\in\fSo$, or more generally $\vx\in\fS$, then
\begin{equation}
\label{tt2}
\sumiN x_i = 1,
\end{equation}
which together with \eqref{tt1} yields a system of $N$ non-linear equations
in the $N$ unknowns $x_i$.

\begin{theorem}\label{Tvi}
Consider Thiele's method for a set 
$\cP$ of $N$ parties 
with some given
numbers of votes $\set{v_\gs}_{\gs\subseteq\cP}$.
  Suppose that every party gets some individual vote, i.e.,
  \begin{equation}
	\label{vi}
v_{\set i}>0 \text{ for  every } i\in\cP.
  \end{equation}
Then the system
\eqref{tt1} has a unique
  solution $\vx_0$ in $\fSo$,
and $\vp_n\to\vx_0$ as \ntoo.
Moreover, $\vx_0$ is a smooth function of the vote numbers $v_\sigma$ as long
as \eqref{vi} holds.
\end{theorem}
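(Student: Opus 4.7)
The plan is to derive everything from \refT{TT} together with a convex-optimisation argument for the underlying system. The key object is the function
\begin{equation*}
  F(\vx) := -\sum_\gs v_\gs \log x_\gs,
\end{equation*}
defined for $\vx\in\fSo$ with $x_\gs=\sum_{i\in\gs}x_i$ as in \eqref{xpi} (the sum ranging over the finitely many $\gs$ with $v_\gs>0$, and $x_\gs>0$ on $\fSo$). I would first compute $\partial F/\partial x_i = -\sum_{\gs\ni i} v_\gs/x_\gs$, so that the stationarity condition for $F$ on $\fS$---with a Lagrange multiplier $\lambda$ for the constraint $\sum_i x_i=1$---reads $\sum_{\gs\ni i} v_\gs/x_\gs = \lambda$ for every $i$, which is exactly the system \eqref{tt1}. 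A direct computation of the Hessian yields
\begin{equation*}
  \vy^\top D^2 F(\vx)\,\vy \;=\; \sum_\gs \frac{v_\gs}{x_\gs^2}\,y_\gs^2,
\qquad y_\gs:=\sum_{i\in\gs}y_i,
\end{equation*}
and under hypothesis \eqref{vi} the diagonal contribution $\sum_i (v_{\set i}/x_i^2)y_i^2$ alone is strictly positive whenever $\vy\neq 0$; hence $F$ is strictly convex on $\fSo$.

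Next I would analyse the boundary behaviour of $F$ on $\partial\fS$. At any boundary point where $x_{i_0}=0$ the term $-v_{\set{i_0}}\log x_{i_0}$ diverges to $+\infty$, while every other term $-v_\gs\log x_\gs$ is bounded below by $0$ since $x_\gs\le 1$; so $F$, extended to the compact simplex $\fS$ as a $[0,+\infty]$-valued lower semicontinuous function, attains its infimum in $\fSo$. By strict convexity this minimiser $\vx_0$ is unique, and the Lagrange condition identifies it as a solution of \eqref{tt1}. Conversely, any $\vx\in\fSo$ satisfying \eqref{tt1} is a critical point of $F$ on $\fS$, hence must equal $\vx_0$. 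Thus \eqref{tt1} has a unique solution in $\fSo$, and \refT{TT} immediately yields $\vp_n\to\vx_0$.

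For the smoothness claim I would apply the implicit function theorem to the map taking $\vx\in\fSo$ to the pair consisting of the $N-1$ differences in \eqref{tt1} and the quantity $\sum_i x_i-1$, a smooth map parametrised by $(v_\gs)$. Its Jacobian with respect to $\vx$ at $\vx_0$ is, up to the bordering from the linear constraint, the Hessian $D^2F(\vx_0)$ restricted to the tangent hyperplane $\{\vy:\sum_i y_i=0\}$, which is positive definite by the Hessian computation above; this gives invertibility, and the implicit function theorem then delivers smooth dependence of $\vx_0$ on $(v_\gs)$ throughout the open set where \eqref{vi} continues to hold.

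The main obstacle, such as it is, is the boundary analysis: hypothesis \eqref{vi} must be used twice---first to force the minimiser of $F$ strictly inside $\fS$, and second to ensure strict convexity on the tangent hyperplane of the simplex rather than merely on all of $\bbR^N$. Without \eqref{vi} one could lose the interior minimiser (a party receiving only joint votes might sensibly end up with asymptotic proportion $0$) or lose strict convexity (parties might be indistinguishable from the votes). The remaining ingredients are standard convex analysis and the implicit function theorem.
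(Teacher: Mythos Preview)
Your proposal is correct and follows essentially the same route as the paper: define the (negative of the) potential $\psi(\vx)=\sum_\gs v_\gs\log x_\gs$, use hypothesis \eqref{vi} to force the extremum into $\fSo$ and to obtain strict concavity/convexity there, identify the critical-point equation with \eqref{tt1}, invoke \refT{TT}, and finish with the implicit function theorem via the nondegenerate Hessian. The only cosmetic differences are that the paper works with $\psi$ (concave, maximising $\Psi=e^\psi$) rather than your $F=-\psi$, argues strict concavity by looking at second derivatives along lines rather than writing out the full quadratic form $\sum_\gs (v_\gs/x_\gs^2)y_\gs^2$, and for smoothness eliminates one variable to reduce to an unconstrained problem in $x_1,\dots,x_{N-1}$ instead of using a Lagrange/bordered-Hessian formulation; your explicit Hessian computation is arguably tidier.
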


The limit $\vx_0$ in these theorems
can also be characterized as the solution to an
optimization problem, which futhermore allows for a more general result.

Using the notations \eqref{xpi} and \eqref{PP},
and the standard convention $0^0=1$,
define the function, for $x_1,\dots,x_N\ge0$,
\begin{align}
  \Psi(x_1,\dots,x_N)&
:=\prod_{\sigma\neq\emptyset} x_\sigma^{v_\sigma}
=\prod_{\sigma\in\PP} x_\sigma^{v_\sigma}. \label{TPsi}
\end{align}
It is immediate that $\Psi$ is a
continuous function $\ooo^N\to\ooo$.
Let $M$ be the maximum of $\psi$ on the compact set $\fS$, and let
\begin{equation}\label{cMM}
  \cM:=\set{\vx\in\fS:\Psi(\vx)=M}
\end{equation}
be the set where the maximum is attained.

\begin{lemma}
  \label{LM}
  \begin{thmenumerate}
  \item \label{LM1}
$\cM$ is a non-empty compact convex subset of $\fS$.
\item \label{LM2}
 If $\vx\in\fSo$, then
\begin{equation}\label{linne}
  \vx\in\cM\iff\text{\eqref{tt1} holds}.
\end{equation}
\item \label{LM3}
If \eqref{tt1} has a unique solution $\vx_0$ in $\fSo$, then
$\cM=\set{\vx_0}$, i.e., $\vx_0$ is the only point in $\fS$ where the
maximum of $\Psi$ is attained.
  \end{thmenumerate}
\end{lemma}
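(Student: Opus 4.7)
My plan is to work with $\log\Psi$ rather than $\Psi$ itself, and exploit its concavity. Writing
\begin{equation*}
\log\Psi(\vx)=\sum_{\sigma\in\PP} v_\sigma\log x_\sigma,
\end{equation*}
each summand is concave in $\vx$ (composition of the concave function $\log$ with a linear form, taking value $-\infty$ where $x_\sigma=0$), so $\log\Psi:\fS\to[-\infty,\infty)$ is concave and upper semicontinuous, while $\Psi$ itself is continuous.

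For \ref{LM1}, existence of a maximiser on the compact set $\fS$ follows from continuity of $\Psi$, and $\cM=\Psi\qw(\set M)$ is closed in $\fS$, hence compact. For convexity, I first note that $M>0$ (take $\vx=(1/N,\ldots,1/N)$, which gives $x_\sigma=|\sigma|/N>0$ for every $\sigma\in\PP$), so $\cM\subseteq\set{\Psi>0}$, and on this set $\log\Psi$ is finite. If $\vx,\vy\in\cM$ and $t\in\oi$, set $\vz:=t\vx+(1-t)\vy$; concavity of $\log$ gives $\log z_\sigma\ge t\log x_\sigma+(1-t)\log y_\sigma$ for every $\sigma\in\PP$, hence $\log\Psi(\vz)\ge t\log\Psi(\vx)+(1-t)\log\Psi(\vy)=\log M$, forcing $\vz\in\cM$.

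For \ref{LM2}, I use the Lagrange-multiplier condition for maximising the concave function $\log\Psi$ on $\fS$ subject to the linear constraint $\sum_i x_i=1$. At any interior point $\vx\in\fSo$, a direct computation from \eqref{TPsi} gives
\begin{equation*}
\frac{\partial}{\partial x_i}\log\Psi(\vx)=\sum_{\sigma\ni i}\frac{v_\sigma}{x_\sigma},
\end{equation*}
so the first-order condition is exactly that all these partial derivatives coincide, which is the system \eqref{tt1}. Since $\log\Psi$ is concave and the constraint is linear, this condition is both necessary and sufficient for $\vx\in\fSo$ to be a maximiser, which proves \eqref{linne}.

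For \ref{LM3}, by \ref{LM2} the hypothesis means $\cM\cap\fSo=\set{\vx_0}$, and in particular $\vx_0\in\cM$. Suppose for contradiction that there is some $\vy\in\cM\setminus\set{\vx_0}$; necessarily $\vy\in\partial\fS$. By \ref{LM1}, the segment $\vx_t:=(1-t)\vx_0+t\vy$, $t\in\oi$, lies entirely in $\cM$, and for $t\in[0,1)$ one has $\vx_t\in\fSo$ (each coordinate is a positive combination of something positive and something non-negative). Hence \ref{LM2} yields uncountably many distinct solutions of \eqref{tt1} in $\fSo$, contradicting uniqueness. The only delicate point in the whole argument is the passage from \ref{LM1} and \ref{LM2} to \ref{LM3}, namely verifying that the segment stays in $\fSo$ away from the endpoint $\vy$; this is where convexity of $\cM$, established in \ref{LM1} through log-concavity, does the essential work.
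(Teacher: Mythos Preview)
Your proof is correct and follows essentially the same approach as the paper: the paper likewise introduces $\psi=\log\Psi$ (its equation \eqref{Tpsi}), uses concavity for \ref{LM1}, computes $\partial_i\psi(\vx)=\sum_{\sigma\ni i}v_\sigma/x_\sigma$ and the first-order condition on $\fSo$ for \ref{LM2}, and for \ref{LM3} argues that $\cM\cap\fSo=\set{\vx_0}$ together with convexity of $\cM$ forces $\cM=\set{\vx_0}$. Your version of \ref{LM3} simply spells out the convexity step (the segment from $\vx_0$ to a hypothetical boundary point of $\cM$ would lie in $\cM\cap\fSo$) that the paper leaves implicit.
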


The limit $\vx_0$ in Theorems \ref{TT}--\ref{Tvi} is thus the unique
maximum point of $\Psi$ on $\fS$.
The following, more general, theorem gives a (weaker) result
 also in the case when the maximum point is not unique.

\begin{theorem}\label{TT2}
Consider Thiele's method for $N$ parties $1,\dots,N$, with some given
numbers of votes $v_\gs$, for $\gs\subseteq\cP=\set{1,\dots,N}$.
Then,
  as \ntoo, $\vp_n\to\cM$, in the sense that the 
(Euclidean) distance $d(\vp_n,\cM)\to0$.
In particular, if $\cM$ consists of a single point, 
i.e.,  $\cM=\set{\vx_0}$ for some $\vx_0\in\fS$, then
$\vp_n\to\vx_0$.
\end{theorem}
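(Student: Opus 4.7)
The plan is to recognize Thiele's method as the greedy maximization of a discrete analogue of $F(\vp):=\log\Psi(\vp)=\sum_\sigma v_\sigma\log p_\sigma$, and then to compare the greedy trajectory with the true maximum $M_*:=\log M=\max_{\fS}F$. Setting
\begin{equation*}
S_n:=\sum_{\sigma\in\PP}v_\sigma H_{n_\sigma(n)},\qquad H_m:=1+\tfrac{1}{2}+\cdots+\tfrac{1}{m},
\end{equation*}
with $\vn(n)$ the vector of seat counts after $n$ rounds, a direct check using \eqref{thwi} shows that $W_i$ equals the increment $S_{n+1}-S_n$ obtained by awarding the next seat to party $i$, so Thiele is the greedy rule for $S_n$. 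From $H_m=\log m+\gamma+O(1/m)$ one has, with $V:=\sum_\sigma v_\sigma$, the identity
\begin{equation*}
S_n = VH_n + F(\vp_n) + o(1)
\end{equation*}
whenever the trajectory stays bounded away from the faces on which $F\equiv-\infty$.

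A uniform upper bound $S_n\le VH_n+M_*+O(1)$ is valid for every $n$-allocation: $H_m\le\log(m+1)+1$ and concavity of $\log$ yield $S_n\le V\log(n+1)+\sum_\sigma v_\sigma\log\bigl((m_\sigma+1)/(n+1)\bigr)+V$, and the second term is dominated by $F$ evaluated at a probability vector built from the allocation, hence by $M_*$. For the matching lower bound along Thiele's trajectory, fix $\vy\in\cM\cap\fSo$ (perturbing into $\fSo$ if $\cM$ sits only on the boundary). The greedy step gives
\begin{equation*}
S_{n+1}-S_n=\max_i W_i\ \ge\ \sum_i y_i W_i\ =\ \sum_\sigma\frac{v_\sigma y_\sigma}{n_\sigma(n)+1}.
\end{equation*}
Summing from $n=0$ to $N-1$, rewriting $\sum_{n<N}1/(n_\sigma(n)+1)$ according to the step structure of the monotone sequence $n\mapsto n_\sigma(n)$, and invoking the KKT identity $\sum_{\sigma\ni i}v_\sigma/y_\sigma=V$ for $\vy\in\cM\cap\fSo$, one obtains $S_N\ge VH_N+F(\vy)+o(1)=VH_N+M_*+o(1)$.

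Combining the two bounds forces $F(\vp_n)\to M_*$. Since $F$ is upper semicontinuous on the compact set $\fS$, every subsequential limit $\vp^*$ of $(\vp_n)$ satisfies $F(\vp^*)\ge\limsup F(\vp_n)=M_*$, so $\vp^*\in\cM$; by compactness this gives $d(\vp_n,\cM)\to 0$, and the final assertion for $\cM=\{\vx_0\}$ is then immediate. The principal obstacle is the lower bound: when $\cM$ meets the boundary of $\fS$, one cannot use an interior $\vy$ directly, and the re-organization of $\sum_n 1/(n_\sigma(n)+1)$ requires that $n_\sigma(n)$ grow at the correct linear rate for every relevant $\sigma$; this is handled by a bootstrap, approximating $\cM$-points by interior perturbations and passing to the limit, combined with a priori estimates ruling out that $n_\sigma(n)$ be asymptotically negligible for $\sigma\in\PP$ with $v_\sigma>0$.
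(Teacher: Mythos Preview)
Your potential $S_n=\sum_\sigma v_\sigma H_{n_\sigma}$ and the greedy interpretation of Thiele's rule are correct, but the lower bound --- which you rightly flag as the crux --- does not follow from the inequality $\max_i W_i\ge\sum_i y_i W_i$. That inequality is too lossy when summed. Take two parties with $v_{\{1\}}=v_{\{2\}}=1$ and no joint vote, so $\vy=(\tfrac12,\tfrac12)$ and $M_*=-2\log 2$. Thiele alternates; after $2K$ steps $S_{2K}=2H_K=VH_{2K}+M_*+o(1)$ exactly. But summing your bound along this very trajectory gives only
\[
\sum_{n<2K}\sum_i y_i W_i(\vn^{(n)})=2H_K-\tfrac12+o(1):
\]
at each odd step the state is $(k{+}1,k)$, where $\tfrac12\cdot\tfrac1{k+2}+\tfrac12\cdot\tfrac1{k+1}$ falls short of $\max_i W_i=\tfrac1{k+1}$ by $\tfrac1{2(k+1)(k+2)}$, and these shortfalls telescope to $\tfrac12$. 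Thus your argument yields at best $F(\vp_n)\ge M_*-\tfrac12+o(1)$, which together with $F\le M_*$ does not force $F(\vp_n)\to M_*$. The ``step-structure rewriting'' and the KKT identity cannot repair this: the deficit already appears in this simplest example, where KKT is trivially satisfied and there is no nontrivial step structure to exploit. Separately, the identity $S_n=VH_n+F(\vp_n)+o(1)$ requires $n_*:=\min_{\sigma\in\PP} n_\sigma\to\infty$; you assume this but do not prove it, and your ``bootstrap'' is not described.

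The paper's route avoids both problems. After establishing $n_*\to\infty$ directly (\refL{Lnx}), \refL{LW} gives
\[
\psi(\vp_{n+1})-\psi(\vp_n)=\tfrac1n\max_i\ddxi\psi(\vp_n)+O(n_*^{-2}),
\]
and \refL{Ldiff} shows by a compactness argument that $\max_i\ddxi\psi\ge c>0$ on any closed subset of $\fS$ disjoint from $\cM$. Hence $\psi(\vp_n)$ increases by at least $c/n$ whenever $\vp_n$ stays outside a given neighbourhood of $\cM$; since $\psi\le m$ and $\sum c/n$ diverges, $\vp_n$ must enter the neighbourhood, and a short trapping argument (using $|\vp_{n+1}-\vp_n|=O(1/n)$ together with nested sublevel sets of $\psi$) keeps it there. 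This is a qualitative monotonicity argument and never needs the sharp two-sided asymptotics on $S_n$ that your approach requires.
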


\subsection{Proofs}
We consider the votes $\set{v_\sigma}$ as fixed.
Explicit and implicit constants below generally depend on $\set{v_\sigma}$.
%

We define for $x_1,\dots,x_N\ge0$,
recalling \eqref{TPsi} and
with the convention $0\cdot\infty=0$,
\begin{align}
  \psi(x_1,\dots,x_N)&:=
\log\Psi(x_1,\dots,x_N)
=\sum_{\sigma\in\PP} v_\sigma\log x_{\sigma}. \label{Tpsi}
\end{align}
Note that $\psi$ may take the value $-\infty$. 
Since $\Psi$ is a
continuous function $\ooo^N\to\ooo$, 
$\psi=\log \Psi$ is a
continuous function $\ooo^N\to[-\infty,\infty)$ (with the standard topology);
furthermore, $\psi$ is concave.

 The partial derivatives of $\psi$ are 
\begin{equation}\label{dipsi}
\ddi \psi:=  \frac{\partial \psi}{\partial x_i} 
= \sum_{\sigma\ni i} \frac{v_\sigma}{x_\sigma}.
\end{equation}
(If $\psi(\vx)=-\infty$, we regard the sum in \eqref{dipsi} as a definition
of $\ddi\psi(\vx)$.)
These derivatives are finite (and smooth) in $(0,\infty)^N$, 
but may be infinite
on the boundary; more precisely, 
$ \ddi \psi =+\infty$ when $x_\sigma=0$ for some
$\sigma\in\PP$ with $i\in\sigma$.

We are mainly interested in the behaviour of $\psi$ on 
the simplex $\fS$.
However, 
the partial derivatives $\ddi$ are along directions poining out of $\fS$;
we thus also consider directional derivatives in $\fS$. Let $e_i$,
$i=1,\dots,N$, be the unit vectors and define, for
$\vx=(x_1,\dots,x_N)\in\fS$, 
\begin{equation}
 e^*_i=e^*_i(\vx):=e_i-\sumjN x_j e_j
\end{equation}
which is parallel to $\fS$ and can be seen as a projection of $e_i$ to the
hyperplane $H$ of vectors tangent to $\fS$,
and the corresponding directional derivative
\begin{equation}\label{ddxi}
  \ddxi:=\partial_{e^*_i}=\partial_i-\sumjN x_j\partial_j.
\end{equation}
Equivalently, for any differentiable function $f$ on $(0,\infty)^N$ and
$\vx\in\fS$,
\begin{equation}\label{ddxi2}
  \ddxi f(\vx)=\ddi f(\vx)- \frac{d f(t\vx)}{d t}\Bigr|_{t=1}.
\end{equation}
Note that the vectors $e^*_i$ span the hyperplane $H$, and thus the
operators $\ddxi$ span the $(N-1)$-dimensional space of directional
derivatives parallel to $\fS$; moreover, they satsfy the linear relation
\begin{equation}\label{sumddxi}
  \sumiN x_i\ddxi=0.
\end{equation}

As said above, $\ddi\psi(\vx)$ may be $+\infty$ (but not $-\infty$).
Furthermore, it follows from \eqref{dipsi} that $x_i\ddi\psi(\vx)=O(1)$.
Hence $\ddxi\psi(\vx)$ is well-defined by \eqref{ddxi} for every
$\vx\in\fS$, with
\begin{equation}\label{ddxipsi}
  \ddxi\psi(\vx)=\ddi\psi(\vx)+O(1)\in(-\infty,\infty].
\end{equation}
Moreover, if $\vqq:=\sum_\sigma v_\sigma$, then 
by \eqref{Tpsi}, for any $t>0$, 
\begin{equation}
\psi(t\vx)=\psi(\vx)+\vqq\log t.  
\end{equation}
Hence, by \eqref{ddxi2}, for $\vx\in\fSo$, 
\begin{equation}\label{ddxi3}
\ddxi\psi(\vx)=\ddi\psi(\vx)-\vqq.  
\end{equation}
(More generally, \eqref{ddxi3} holds for all $\vx\in\fS$ with
$\psi(\vx)>-\infty$, but not necessarily everywhere on the boundary of $\fS$.)

Let $m:=\log M=\max_{\fS}\psi$.
Then \eqref{cMM} can be written
\begin{equation}\label{cM}
  \maxx=\set{\vx\in\fS:\psi(\vx)=m}.
\end{equation}
Since the function $\psi$ is concave (and the set $\fS$ convex),
if $\vx\in\fSo$, then
\begin{equation}\label{cMiff}
  \vx\in\cM \iff
\ddxi\psi(\vx) =0 \text{ for every $i$}.
\end{equation}
For $x$ on the boundary $\partial\fS$, we still have an implication 
$\Leftarrow$, but not necessarily in the opposite direction, see
\refE{Eexc1} below.

\begin{proof}[Proof of \refL{LM}]
\pfitemref{LM1}
$\cM$ is non-empty and compact by the definition \eqref{cMM}
because $\Psi$ is continuous and $\fS$ is compact.
Furthermore, $\cM$ is convex by \eqref{cM} because $\psi$ is a concave function.

\pfitemref{LM2}
If \eqref{tt1} holds, then by \eqref{dipsi},
  $\partial_1\psi(\vx)=\dots=\partial_N\psi(\vx)$. 
If also $\vx\in\fSo$, so \eqref{tt2} holds, then  \eqref{ddxi}
yields $\ddxi\psi(\vx)=0$ for every $i$, and 
thus  $\vx\in\cM$ by \eqref{cMiff}.

Conversely, if $\vx\in\cM\cap\fSo$, then \eqref{cMiff}
and \eqref{ddxi3} 
yield $\ddi\psi(\vx)=\vqq$ for all $i$, and thus
\eqref{dipsi} shows that
\eqref{tt1} holds.

\pfitemref{LM3}
If \eqref{tt1} has a unique solution $\vx_0$ in $\fSo$, 
then \ref{LM2} shows that 
$\cM\cap\fSo=\set{\vx_0}$.
Since $\cM$ is convex by \ref{LM1}, it follows that $\cM=\set{\vx_0}$.
\end{proof}

An important link between the seat assignments by Thiele's
method and the function $\psi$ is given by the following lemma.
We define
\begin{align}\label{nx}
  \nx&:=\min_{\sigma\in\PP} n_\sigma,
\\
  \px&:=\min_{\sigma\in\PP} p_\sigma=\nx/n. \label{px}
\end{align}
\begin{lemma}\label{LW}
For every party $i$,
  \begin{equation}\label{lw1}
W_i
=\frac{1}{n}\ddi\psi(\vp)+O\Bigpar{\frac{1}{\nx^2}}.
  \end{equation}
 
Moreover
\begin{equation}\label{wp3}
  \psi(\vp_{n+1})-\psi(\vp_n) 
= \frac{1}{n}\max_i \ddxi\psi(\vp_n)+ O\Bigpar{\frac{1}{\nx^2}}.
\end{equation}
\end{lemma}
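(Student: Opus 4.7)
The plan is to prove the two estimates by a direct computation based on the definitions, with the link between them provided by Thiele's selection rule.

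For \eqref{lw1}, I would subtract the definitions \eqref{thwi} and \eqref{dipsi}---the latter evaluated at $\vp=\vp_n$, where $p_\sigma=n_\sigma/n$---to obtain
\begin{equation*}
W_i-\tfrac{1}{n}\ddi\psi(\vp_n)=\sum_{\sigma\ni i}v_\sigma\Bigpar{\frac{1}{1+n_\sigma}-\frac{1}{n_\sigma}}=-\sum_{\sigma\ni i}\frac{v_\sigma}{n_\sigma(1+n_\sigma)}.
\end{equation*}
Each summand is bounded in absolute value by $v_\sigma/\nx^2$ whenever $\nx\ge 1$, so the whole difference is $O(1/\nx^2)$; if $\nx=0$ then \eqref{lw1} is vacuous. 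This part is routine and takes a few lines.

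For \eqref{wp3}, I would start by noting that, if $\ix$ denotes the party elected at step $n+1$, then $n_{n+1,\sigma}=n_{n,\sigma}+\ett{\ix\in\sigma}$ and therefore
\begin{equation*}
\frac{p_{n+1,\sigma}}{p_{n,\sigma}}=\frac{n}{n+1}\Bigpar{1+\frac{\ett{\ix\in\sigma}}{n_\sigma}}.
\end{equation*}
Substituting into \eqref{Tpsi} and taking logarithms gives
\begin{equation*}
\psi(\vp_{n+1})-\psi(\vp_n)=\vqq\log\tfrac{n}{n+1}+\sum_{\sigma\ni\ix}v_\sigma\log\Bigpar{1+\tfrac{1}{n_\sigma}}.
\end{equation*}
Two Taylor expansions $\log(1+1/m)=1/m+O(1/m^2)$---applied to $m=n$ and to each $m=n_\sigma\ge\nx$, using $1/n^2\le 1/\nx^2$---then reduce this to $\tfrac{1}{n}\bigpar{\partial_{\ix}\psi(\vp_n)-\vqq}+O(1/\nx^2)$, which by \eqref{ddxi3} equals $\tfrac{1}{n}\ddx_{\ix}\psi(\vp_n)+O(1/\nx^2)$.

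To finish, I would invoke Thiele's rule: $\ix$ is chosen to maximize $W_i$. By \eqref{lw1} applied uniformly in $i$,
\begin{equation*}
\tfrac{1}{n}\partial_{\ix}\psi(\vp_n)=W_{\ix}+O(1/\nx^2)=\max_i W_i+O(1/\nx^2)=\tfrac{1}{n}\max_i\ddi\psi(\vp_n)+O(1/\nx^2).
\end{equation*}
Subtracting the $i$-independent shift $\vqq/n$ from each side converts this (via \eqref{ddxi3}) into the corresponding identity for $\ddx_i$, which combined with the previous display yields \eqref{wp3}. The only subtlety is to ensure that the error in \eqref{lw1} is uniform in $i$ (it is, since the implicit constant depends only on $\sum_\sigma v_\sigma$), so that the argmax transfers from $W_i$ to $\ddi\psi(\vp_n)$ at the cost of only $O(1/\nx^2)$; once this uniformity is in hand, everything else is standard Taylor expansion.
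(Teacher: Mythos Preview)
Your proof is correct. The treatment of \eqref{lw1} is essentially identical to the paper's.

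For \eqref{wp3}, however, you take a genuinely different route. The paper argues geometrically: it writes $\gD\vp=\vp_{n+1}-\vp_n=(e_\ell-\vp_n)/(n+1)$ and performs a multivariable Taylor expansion
\[
\psi(\vp_{n+1})-\psi(\vp_n)=\gD\vp\cdot\nabla\psi(\vp_n)+O\bigl(\tfrac{n^2}{\nx^2}\,|\gD\vp|^2\bigr),
\]
where the remainder is controlled via the Hessian bound $\ddi^2\psi(\vx)=O\bigl(1/(\min_\sigma x_\sigma)^2\bigr)$; the first-order term then collapses to $\ddx_\ell\psi(\vp_n)/(n+1)$ directly from the definition \eqref{ddxi}, and a further correction replaces $1/(n+1)$ by $1/n$. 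You instead exploit the explicit log-sum structure of $\psi$: computing the ratio $p_{n+1,\sigma}/p_{n,\sigma}$ exactly and applying the one-variable expansion $\log(1+1/m)=1/m+O(1/m^2)$ termwise. This is more elementary---no multivariable remainder estimate or Hessian bound is needed---and it lands directly on $\tfrac{1}{n}(\partial_{\ix}\psi-\vqq)$, which you then identify with $\tfrac{1}{n}\ddx_{\ix}\psi$ via \eqref{ddxi3}. One small point worth making explicit: \eqref{ddxi3} is stated for $\vx\in\fSo$, but as the paper remarks just after it, the identity holds more generally whenever $\psi(\vx)>-\infty$, i.e.\ whenever $x_\sigma>0$ for all $\sigma\in\PP$; since you are already assuming $\nx\ge1$ for the estimate to be non-vacuous, this is automatic. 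The final step---transferring the $\max$ from $W_i$ to $\ddxi\psi$ by subtracting the $i$-independent shift---is the same in both proofs.
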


\begin{proof}
First, for any $i$, \eqref{thwi} yields
\begin{equation}
W_i 
= \sum_{\sigma\ni i}\Bigpar{\frac{v_\sigma}{n_\sigma}
  +O\Bigpar{\frac{v_\sigma}{n_\sigma^2}}}  
=\sum_{\sigma\ni i}\frac{v_\sigma}{ n p_\sigma}+O\Bigpar{\frac{1}{\nx^2}}  
,
\end{equation}
which by  \eqref{dipsi}  shows \eqref{lw1}.

Suppose that the $(n+1)$:th seat goes to party $\ell$.
  Let, with 
$\vn:=(n_{1},\dots,n_{N})$,
  \begin{equation}\label{gDp}
\gD\vp:=\vp_{n+1}-\vp_n=\frac{\vn+e_\ell}{n+1}-\frac{\vn}{n}
=\frac{e_\ell-\vp_n}{n+1}	
  \end{equation}
and note that $\abs{\gD\vp}=O(1/n)$.

It follows from \eqref{dipsi} that
\begin{align}
  \ddi\psi(\vx)&=O\parfrac{1}{\min_{\sigma\in\PP} x_\sigma}, \label{snp1}
\\
  \ddi^2\psi(\vx)&=O\parfrac{1}{(\min_{\sigma\in\PP} x_\sigma)^2}.\label{snp2}
\end{align}
Thus, for $\vx$ on the line segment between $\vp_n$ and $\vp_{n+1}$,
since $\vp_{n+1}\ge \frac{n}{n+1}\vp_n$,
\begin{equation}
  \ddi^2\psi(\vx)=O\parfrac{1}{\px^2}
=O\parfrac{n^2}{\nx^2}.
\end{equation}
Hence, a Taylor expansion yields,
using \eqref{gDp} and \eqref{ddxi},
\begin{equation}\label{lx}
  \begin{split}
\psi(\vp_{n+1})-\psi(\vp_n)	
&=
\psi(\vp+\gD\vp)-\psi(\vp)	
=\gD\vp\cdot\nabla\psi(\vp)+O\lrpar{\frac{n^2}{\nx^2}|\gD\vp|^2}
\\&
=\frac{\ddq_\ell\psi(\vp)-\sumjN p_j\partial_j\psi(\vp)}{n+1}
+O\parfrac{1}{\nx^2}
\\&
=\frac{\ddx_\ell\psi(\vp)}{n+1}
+O\parfrac{1}{\nx^2}.
  \end{split}
\raisetag\baselineskip
\end{equation}
Furthermore, by \eqref{snp1},
\begin{equation}
  \frac{\ddx_\ell\psi(\vp)}{n+1}-
\frac{\ddx_\ell\psi(\vp)}{n}
= - \frac{\ddx_\ell\psi(\vp)}{n(n+1)}
=O\parfrac{1}{n^2\px}  
=O\parfrac{1}{\nx^2}  .
\end{equation}
Consequently, 
\eqref{lx} yields
\begin{equation}\label{lx2}
\psi(\vp_{n+1})-\psi(\vp_n)	
=\frac{1}n \ddx_\ell\psi(\vp)
+O\parfrac{1}{\nx^2}.
\end{equation}
Furthermore, by the definition of Thiele's method, $W_\ell=\max_i W_i$, and
thus \eqref{lw1} yields
\begin{equation}
 \frac{1}n \ddx_\ell\psi(\vp) 
=\max_i W_i +O\Bigpar{\frac{1}{\nx^2}}
=\max_i \frac{1}{n}\ddxi\psi(\vp) +O\Bigpar{\frac{1}{\nx^2}},
\end{equation}
which yields \eqref{wp3} by \eqref{lx2}.
\end{proof}

\begin{lemma}
  \label{Ldiff}
Let $U\subset\fS$ be an open neighbourhood of $\cM$.
Then there exists $\cc>0\ccdef\ccdiff$ 
such that for every $\vx\in\fS\setminus U$, there
exists $i$ with $\ddxi\psi(\vx)\ge \ccx$. 
\end{lemma}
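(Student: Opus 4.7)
The plan is to establish the pointwise inequality
\begin{equation*}
F(\vx)\; :=\;\max_i \ddxi\psi(\vx)\;\ge\; m-\psi(\vx),\qquad \vx\in\fS,
\end{equation*}
where $m:=\log M=\max_{\fS}\psi$, and then deduce the lemma by a compactness/semi-continuity argument. Given this inequality, since $\psi=\sum_{\sigma\in\PP}v_\sigma\log x_\sigma$ is upper semi-continuous as a map $\fS\to[-\infty,\infty)$, it attains its supremum on the compact set $\fS\setminus U$ at some point $\vx^*$; because $\fS\setminus U$ is disjoint from $\maxx$, we have $\psi(\vx^*)<m$, so $c:=m-\psi(\vx^*)>0$ will serve as the required constant, and for every $\vx\in\fS\setminus U$ we will have $F(\vx)\ge m-\psi(\vx)\ge c$.

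For the pointwise inequality I would split into two cases. If $\psi(\vx)=-\infty$, then $x_\sigma=0$ for some $\sigma\in\PP$; for any $i\in\sigma$, formula \eqref{dipsi} gives $\ddi\psi(\vx)=+\infty$, and then \eqref{ddxi} together with the bound $x_j\ddj\psi(\vx)=O(1)$ (noted in the text before \eqref{ddxipsi}) shows $\ddxi\psi(\vx)=+\infty$, so the inequality is trivial. If instead $\psi(\vx)>-\infty$, every $x_\sigma$ with $v_\sigma>0$ is positive, so each $\ddi\psi(\vx)$ is finite, and $\psi$ is smooth at $\vx$ along directions in $\fS$. Concavity of $\psi$ yields the supporting-hyperplane inequality
\begin{equation*}
\psi(\vy)-\psi(\vx)\;\le\;\sum_{i=1}^N(y_i-x_i)\,\ddi\psi(\vx),\qquad \vy\in\fS,
\end{equation*}
where at indices with $x_i=0$ we use the right derivative, which is legitimate because $\vy\in\fS$ forces $y_i-x_i\ge 0$. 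Using $\sum_i y_i=\sum_i x_i=1$ together with \eqref{ddxi},
\begin{equation*}
\sum_i(y_i-x_i)\ddi\psi(\vx)\;=\;\sum_i y_i\ddi\psi(\vx)-\sum_j x_j\ddj\psi(\vx)\;=\;\sum_i y_i\,\ddxi\psi(\vx).
\end{equation*}
Since $y_i\ge 0$ and $\sum_i y_i=1$, this last expression is a convex combination of the $\ddxi\psi(\vx)$ and is therefore bounded above by $F(\vx)$. Taking $\vy\in\maxx$ so that $\psi(\vy)=m$ produces the desired bound.

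The only delicate point — and hence the main obstacle — is the careful handling of the boundary of $\fS$: both the possible blow-up $\ddi\psi(\vx)=+\infty$ when some $x_\sigma$ vanishes and the fact that $\psi$ is only one-sidedly differentiable at indices $i$ with $x_i=0$. Both issues are resolved by the case split above (the one-sided derivative suffices because the perturbation $\vy-\vx$ has nonnegative $i$-th coordinate whenever $x_i=0$, and the $+\infty$ case makes the inequality trivial). Everything else is a standard concavity-plus-compactness argument, leveraging only \eqref{ddxi}, \eqref{sumddxi}, the convexity of $\fS$, and the continuity/upper semi-continuity of $\psi$.
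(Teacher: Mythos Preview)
Your proof is correct and takes a genuinely different route from the paper's. Both arguments define $F(\vx)=\max_i\ddxi\psi(\vx)$ and use concavity of $\psi$ along the segment from $\vx$ to a point of $\cM$, but they finish differently. The paper first shows qualitatively that $F(\vx)\le 0$ forces $\vx\in\cM$, then proves that $F$ itself is continuous on $\fS$ (which requires a case split to handle the boundary where some $\ddi\psi$ blows up), and concludes by compactness of $\fS\setminus U$. You instead extract the quantitative bound $F(\vx)\ge m-\psi(\vx)$ directly from the concavity inequality and then push the compactness argument onto $\psi$, which is already known to be continuous into $[-\infty,\infty)$; this sidesteps the continuity proof for $F$ entirely and yields an explicit constant $c=m-\max_{\fS\setminus U}\psi$. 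Your approach is arguably cleaner, while the paper's has the small side benefit of recording that $F$ is continuous.

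One minor remark: your worry about one-sided derivatives at indices $i$ with $x_i=0$ is unnecessary in the case $\psi(\vx)>-\infty$. In that case every $x_\sigma$ with $\sigma\in\PP$ is positive, and since the formula $\psi=\sum_{\sigma\in\PP}v_\sigma\log x_\sigma$ depends on the $x_i$ only through these (strictly positive) linear combinations, $\psi$ extends smoothly to a full $\bbR^N$-neighbourhood of $\vx$. So the ordinary gradient exists and the concavity inequality $\psi(\vy)-\psi(\vx)\le\nabla\psi(\vx)\cdot(\vy-\vx)$ holds without any one-sided subtlety. This does not affect the validity of your argument.
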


\begin{proof}
  Let 
  \begin{equation}
g(\vx):=\max_{1\le i\le N} {\ddxi\psi(\vx)} .	
  \end{equation}
The assertion is equivalent to $g(\vx)\ge \ccx$ for $x\notin U$.
We first show $g(\vx)>0$.

Suppose that $\vx\in\fS$ with $g(\vx)\le0$. 
Then $\ddxi\psi(\vx)\le0$ for every $i$.
It follows from \eqref{sumddxi} that then $x_i\ddxi\psi(\vx)=0$ for every
$i$, so  
$\ddxi\psi(\vx)=0$ for every $i$ such that $x_i>0$.

Let $\vy:=(y_1,\dots,y_N)$ be any point in $\fS$, and let
$h(t):=\psi(\vx+t(\vy-\vx))$. Then $h$ is a concave function on $\oi$, 
and, using \eqref{ddxi} and $\sum_i(y_i-x_i)=1-1=0$,
\begin{equation}\label{h'0}
h'(0)=
  \sumiN(y_i-x_i)\ddi\psi(\vx)
=   \sumiN(y_i-x_i)\ddxi\psi(\vx).
\end{equation}
If $x_i>0$, then $\ddxi\psi(\vx)=0$ as just seen. Furthermore,
if $x_i=0$, then $y_i-x_i\ge0$ and $\ddxi\psi(\vx)\le0$.
It follows that every term in the final sum in \eqref{h'0} is $\le0$,
and thus $h'(0)\le0$. Since $h$ is concave, this implies $\psi(\vy)=h(1)\le
h(0)=\psi(\vx)$.  

We have shown that if $\vx\in\fS$ and $g(\vx)\le0$, then $\psi(\vx)\ge\psi(\vy)$
for every $\vy\in\fS$, and thus $\vx\in\cM$.
Equivalently, if $\vx\notin\cM$, then $g(\vx)>0$.

To complete the proof, it suffices to show that $g$ is continuous on $\fS$
(with values in $[0,\infty]$). This is not quite trivial, since the
individual $\ddxi\psi$ in general are not, because $x_j\partial_j\psi(\vx)$
by \eqref{dipsi} is discontinuous at $x_j=0$ if $v_{\set{j}}>0$.
We let $\vx\in\fS$ and consider two cases.
\begin{romenumerate}
\item 
If $\ddi\psi(\vx)<\infty$ for every $i$, then 
by \eqref{dipsi} and \eqref{ddxi},
this holds in a
neighbourhood $V$ of $\vx$, and in $V$ furthermore every $\ddi\psi$ and every
$\ddxi\psi$ is continuous. Hence, $g$ is continuous at $\vx$.
\item 
If $\ddi\psi(\vx)=\infty$ for some $i$, suppose that $\vy\to\vx$
with $\vy\in\fS$. Then $\ddi\psi(\vy)\to\infty$ and thus, using \eqref{ddxipsi},
\begin{equation}
  g(\vy)\ge\ddxi\psi(\vy)=\ddi\psi(\vy)+O(1)\to\infty=g(\vx).
\end{equation}
Hence, $g$ is continuous at $\vx$ in this case too.
\end{romenumerate}
 Consequently, $g$ is continuous everywhere in $\fS$, and since we have
 shown that $g>0$ on the compact set $\fS\setminus U\subseteq
 \fS\setminus\cM$, the result follows.
\end{proof}

\begin{lemma}
  \label{Lnx}
As \ntoo, $\nx\to\infty$.
\end{lemma}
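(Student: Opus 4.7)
My plan is to argue by contradiction using the following simple but crucial identity: for every $n$,
\begin{equation}
\sum_{\sigma\in\PP} v_\sigma H_{n_\sigma(n)}
=\sum_{k=0}^{n-1} W_{\ell_{k+1}}(k),
\end{equation}
where $\ell_{k+1}$ denotes the party elected at step $k+1$, $W_{\ell_{k+1}}(k)$ is its reduced vote \emph{at the moment it is chosen}, $n_\sigma(n)$ is the value of $n_\sigma$ after $n$ seats have been distributed, and $H_m:=\sum_{j=1}^m 1/j$ is the $m$-th harmonic number (with $H_0:=0$). The identity is immediate from the algorithmic description in \refSS{SSthiele-algo}: when seat $k+1$ goes to $\ell$, each $n_\sigma$ with $\sigma\ni\ell$ increases by $1$, so $v_\sigma H_{n_\sigma}$ grows by $v_\sigma/(1+n_\sigma(k))$, and summing over $\sigma\ni\ell$ gives exactly $W_\ell(k)$ by \eqref{thwi}.

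Since always $n_\sigma(n)\le n$, the left-hand side is bounded above by
\begin{equation}
\sum_{\sigma\in\PP} v_\sigma H_{n_\sigma(n)}\le \vqq H_n=\vqq\log n+O(1),
\end{equation}
where $\vqq:=\sum_\sigma v_\sigma$. Hence the sum of the selected reduced votes grows at most logarithmically in $n$.

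Now suppose, toward a contradiction, that $\nx\not\to\infty$. Since each $n_\sigma$ is a non-decreasing integer sequence, there must exist some $\sigma_0\in\PP$ such that $n_{\sigma_0}(n)$ is bounded, and thus equals a constant $N_0$ for all $n\ge n_1$, some $n_1$. Pick any $i_0\in\sigma_0$. From \eqref{thwi},
\begin{equation}
W_{i_0}(n)\ge \frac{v_{\sigma_0}}{1+n_{\sigma_0}(n)}\ge \frac{v_{\sigma_0}}{1+N_0}=:c>0,
\qquad n\ge n_1.
\end{equation}
Since at each step the winner $\ell_{k+1}$ maximizes $W_i(k)$, we get $W_{\ell_{k+1}}(k)\ge W_{i_0}(k)\ge c$ for every $k\ge n_1$, so
\begin{equation}
\sum_{k=0}^{n-1} W_{\ell_{k+1}}(k)\ge c(n-n_1),
\end{equation}
which grows linearly and contradicts the logarithmic upper bound.

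I do not anticipate any real obstacle: everything reduces to the harmonic-sum telescoping identity together with the monotonicity of $n_\sigma$. The only care is in the book-keeping of whether $W_\ell$ is evaluated before or after the chosen increment, and in noting that if $\nx$ fails to tend to $\infty$ then some $n_\sigma$ is \emph{eventually constant} (not merely bounded on a subsequence), which is what makes the lower bound $W_{i_0}\ge c$ hold uniformly from $n_1$ onwards.
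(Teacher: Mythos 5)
Your proof is correct, and it takes a genuinely different route from the paper's. The paper argues combinatorially: assuming some $n_\sigma$ stays bounded, it forms the set $\cE$ of parties belonging to some bounded ballot type, observes that each such party receives only $O(1)$ seats (so eventually none), yet retains $W_i\ge c>0$ forever, while every party outside $\cE$ has $W_i\to0$; hence for large $n$ the maximizer of $W_i$ lies in $\cE$ and that party must receive another seat — a contradiction. Your argument instead uses a global potential function: the telescoping identity $\sum_{\sigma\in\PP}v_\sigma H_{n_\sigma(n)}=\sum_{k=0}^{n-1}W_{\ell_{k+1}}(k)$ is correct (the increment at step $k+1$ is exactly $\sum_{\sigma\ni\ell}v_\sigma/(1+n_\sigma(k))=W_\ell(k)$ by \eqref{thwi}), the left side is at most $\vqq H_n=O(\log n)$, while a single eventually-constant $n_{\sigma_0}$ forces every selected reduced vote from some point on to be at least $c=v_{\sigma_0}/(1+N_0)>0$ (using that the winner maximizes $W_i$ and that $v_{\sigma_0}>0$ for $\sigma_0\in\PP$), giving a linear lower bound. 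You also correctly handle the only delicate point, namely that a non-decreasing integer sequence that does not tend to infinity is eventually constant, so the lower bound holds uniformly from $n_1$ on. What your approach buys is a quantitative statement — the cumulative sum of the winning reduced votes is $O(\log n)$, which is of independent interest and morally explains why $W_{\ell}\to0$ along the sequence of winners; what the paper's approach buys is that it isolates the structural dichotomy ($W_i$ bounded below versus $W_i\to0$) that is reused in spirit elsewhere in that section. Either proof is acceptable.
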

\begin{proof}
  Suppose not. Then, since each $n_\sigma$ is non-decreasing,  there exists
  $\sigma\in\PP$ such that $n_\sigma=O(1)$.
Let
\begin{align}
  \PP_0&:=\set{\sigma\in\PP:n_\sigma=O(1)},
\\
  \cE&:=
\bigcup_{\sigma\in \PP_0}\sigma=
\bigset{i:\exists \sigma\in\PP_0 \text{ with } i\in\sigma}.
\end{align}
Then $\cE$ is a non-empty set of parties, and if $i\in\cE$, then there
exists $\sigma$ with $i\in\sigma\in\PP_0$ and thus $n_i\le n_\sigma=O(1)$.
In other words, after some time, no further seat goes to any party in $\cE$.

On the other hand, if $i\in\cE$, take again $\sigma\in\PP_0$ with
$i\in\sigma$. Then by \eqref{thwi},
\begin{equation}\label{wc}
  W_i \ge \frac{v_\sigma}{1+n_\sigma} \ge c,
\end{equation}
for some $c>0$. 
On the other hand, if $i\notin\cE$, then $n_\sigma\to\infty$ for every
$\sigma\in\PP$ such that $i\in\sigma$, and thus \eqref{thwi} yields $W_i\to0$.
This implies that if $n$ is large enough, then $W_i<c$ for every
$i\notin\cE$,
so by \eqref{wc}, the party $i$ with the largest $W_i$ is a party in $\cE$,
and thus every seat, for large $n$, goes to a party in $\cE$. 
This contradiction proves the lemma.
\end{proof}

\begin{lemma}
  \label{Linc}
Let $U\subset\fS$ be an open neighbourhood of $\cM$. Then there exists
$n_0$ and $\cc>0\ccdef\ccinc$ 
such that for all $n\ge n_0$, either $\vp_n\in U$ or 
\begin{equation}
\label{linc}
\psi(\vp_{n+1})-\psi(\vp_n)\ge \ccx/n.  
\end{equation}
\end{lemma}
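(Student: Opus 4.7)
My plan is to combine the uniform lower bound $\max_i \ddxi\psi(\vp_n)\ge\ccdiff$ from \refL{Ldiff} (valid whenever $\vp_n\in\fS\setminus U$) with the asymptotic formula \eqref{wp3} of \refL{LW}. The difficulty is that the error term $O(1/\nx^2)$ in \eqref{wp3} need not be $o(1/n)$, since \refL{Lnx} only yields $\nx\to\infty$ without a quantitative rate; I handle this by splitting on whether $\vp_n$ stays away from the relative boundary of $\fS$.

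Set $v_{\min}:=\min\set{v_\sigma:\sigma\in\PP}>0$ and $\vqq:=\sum_\sigma v_\sigma$, and fix $\delta_0:=v_{\min}/(2(\vqq+1))$. \emph{Case A:} suppose $p_\sigma\ge\delta_0$ for every $\sigma\in\PP$, so that $\nx\ge\delta_0 n$ and the error in \eqref{wp3} is $O(1/n^2)$. If in addition $\vp_n\notin U$, then \refL{Ldiff} gives $\max_i \ddxi\psi(\vp_n)\ge\ccdiff$, and \eqref{wp3} yields
\[
\psi(\vp_{n+1})-\psi(\vp_n)\ge \frac{\ccdiff}{n}-O\Bigparfrac{1}{n^2}\ge\frac{\ccdiff}{2n}
\]
for all $n\ge n_1$, where $n_1$ depends only on the vote data.

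\emph{Case B:} some $p_{\sigma_0}<\delta_0$, so in particular $\nx<\delta_0 n$. Choose $\sigma_*\in\PP$ with $n_{\sigma_*}=\nx$ and any $i_*\in\sigma_*$. Then \eqref{dipsi} and \eqref{ddxi3} give
\[
\ddx_{i_*}\psi(\vp_n)\ge \frac{v_{\sigma_*}}{p_{\sigma_*}}-\vqq\ge \frac{v_{\min}\,n}{\nx}-\vqq,
\]
and \eqref{wp3} becomes
\[
\psi(\vp_{n+1})-\psi(\vp_n)\ge \frac{v_{\min}}{\nx}-\frac{\vqq}{n}-\frac{C}{\nx^2}
\]
for some constant $C$. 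By \refL{Lnx}, there exists $n_2$ with $\nx\ge 2C/v_{\min}$ for $n\ge n_2$, so the last term is absorbed into $v_{\min}/(2\nx)$. Since $\nx<\delta_0 n$ here, the choice of $\delta_0$ forces $v_{\min}/(2\nx)\ge(\vqq+1)/n$, and the resulting lower bound is at least $1/n$ (regardless of whether $\vp_n\in U$).

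Setting $n_0:=\max(n_1,n_2)$ and $\ccinc:=\min(\ccdiff/2,1)$ completes the proof. The main obstacle, as flagged at the outset, is the competition between the error $O(1/\nx^2)$ and the sought main term of order $1/n$: when $\nx$ is too small for the error to be automatically negligible, the very proximity of some $p_\sigma$ to zero makes $\max_i\ddxi\psi(\vp_n)$ large enough to compensate, so one recovers the useful increase in $\psi$ even without invoking the hypothesis $\vp_n\notin U$ in that regime.
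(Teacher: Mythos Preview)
Your proof is correct and follows essentially the same approach as the paper's: both split into two cases according to the size of $\nx$, using \refL{Ldiff} together with \eqref{wp3} when $\nx$ is large relative to $n$, and using that some $\ddi\psi(\vp_n)$ is large (hence $\max_i\ddxi\psi(\vp_n)$ is large) when $\nx$ is small. The paper takes $n^{3/4}$ as the threshold while you take $\delta_0 n$ with a carefully chosen $\delta_0$, but this is only a technical difference; one minor slip is your claim that $n_1$ depends only on the vote data, whereas in fact it also depends on $U$ through the constant $\ccdiff$ from \refL{Ldiff} (this does not affect correctness, since the lemma allows $n_0$ to depend on $U$).
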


\begin{proof}\newpfcase
  Let $\ccdiff$ be as in \refL{Ldiff} and let $\ccx:=\ccdiff/2$.
We assume $\vp=\vp_n\notin U$ and use \eqref{wp3}. We consider two cases.

\pfcase{$\nx\ge n^{3/4}$}
By \eqref{wp3} and \refL{Ldiff},
\begin{equation}
  \psi(\vp_{n+1})-\psi(\vp_n)\ge \frac{\ccdiff}{n}+ O(\nx^{-2})
= \frac{\ccdiff}{n}+ O\bigpar{n^{-3/2}},
\end{equation}
which is larger than $\ccx/n$ for large $n$.

\pfcase{$\nx<n^{3/4}$}
Let $\sigma\in\PP$ with $n_\sigma=\nx$. By \eqref{dipsi}, for any $i\in\sigma$,
\begin{equation}
  \ddi\psi(\vp)\ge \frac{v_\sigma}{p_\sigma} = n\frac{v_\sigma}{n_\sigma}
\end{equation}
and thus by \eqref{wp3} and \eqref{ddxipsi},
using also \refL{Lnx},
\begin{equation}
  \begin{split}
  \psi(\vp_{n+1})-\psi(\vp_n)
&\ge
\frac{1}n \ddxi\psi(\vp) + O\bigpar{\nx\qww}
=
\frac{1}n \ddi\psi(\vp) + O\bigpar{n\qw}+ O\bigpar{\nx\qww}
\\&
\ge \frac{v_\sigma}{n_\sigma} + O\bigpar{n\qw}+ O\bigpar{\nx\qww}	
= 
 \frac{v_\sigma}{\nx} + o\bigpar{\nx\qw}.
  \end{split}
\raisetag\baselineskip
\end{equation}
For large $n$, the \rhs{} is at least $v_\sigma/(2\nx)\ge \ccx/n$.
\end{proof}

We can now prove the theorems showing convergence
of the proportions $\vp_n$ for Thiele's method.

\begin{proof}[Proof of \refT{TT2}]
  Let $\eps>0$, and let $U:=\set{\vx\in\fS:d(\vx,\cM)<\eps}$.
If $\vx\notin U$, then $\vx\notin\cM$ and thus $\psi(\vx)<m$; 
hence, by compactness, there exists $\gd>0$ such that $\psi(\vx)\le m-\gd$ 
for $x\in\fS\setminus U$.

Let $U_1:=\set{\vx\in\fS:\psi(\vx)>m-\gd}$ and 
$U_2:=\set{\vx\in\fS:\psi(\vx)>m-\gd/2}$.
Then $U_1$ and $U_2$ are open in $\fS$ and
\begin{equation}
  \cM \subset U_2\subset \bU_2 \subset U_1 \subseteq U.
\end{equation}
In particular, the two compact sets $\bU_2$ and $\fS\setminus U_1$ are
disjoint, and thus have a positive distance $\eta$. In other words, $\eta>0$
and if $\vx\in\fS$ with $d(\vx,\bU_2)<\eta$, then $\vx\in U_1$.
We apply \refL{Linc} to $U_2$.

First, we claim that $\vp_n\in U_2$ for infinitely many $n$.
In fact, if this is false, then by \refL{Linc}, 
\eqref{linc} holds for all large $n$. Since 
$\sum_n \ccinc/n =\infty$, this would imply $\psi(\vp_n)\to\infty$, which is a
contradiction because $\psi(\vx)\le0$ when $\vx\in\fS$.

Next, suppose that $n\ge n_0$ and that $\vp_n\in U_1$. There are two cases.
\begin{romenumerate}
\item 
If $\vp_n\notin U_2$, then \eqref{linc} holds, and thus
\begin{equation}
  \psi(\vp_{n+1}) > \psi(\vp_n) > m-\gd;
\end{equation}
hence $\vp_{n+1}\in U_1$. 
\item 
If $\vp_n\in U_2$, then we use \eqref{gDp} which implies $|\gD\vp|\to0$.
Hence, provided $n$ is large enough,  
$|\gD\vp|<\eta$, which together with $\vp_n\in U_2$ and the definition of
$\eta$ implies $\vp_{n+1}=\vp_n+\gD\vp\in U_1$.
\end{romenumerate}

We have thus shown that, in any case, if $n$ is large enough and $\vp_n\in
U_1$, then $\vp_{n+1}\in U_1$. Since we also have shown that $\vp_n\in
U_2\subset U_1$ for arbitrarily large $n$, it follows that for all
sufficiently large $n$, $\vp_n\in U_1\subset U$, and thus $d(\vp_n,\cM)<\eps$.
\end{proof}

\begin{proof}[Proof of \refT{TT}]
The assumption that $\vx_0$ is a unique solution of \eqref{tt1} in $\fSo$
implies $\cM=\set{\vx_0}$ by \refL{LM}\ref{LM3}.
Hence, the result follows from \refT{TT2}. 
\end{proof}

\begin{proof}[Proof of \refT{Tvi}]
  When \eqref{vi} holds, $\Psi(\vx)=0$ as soon as some $x_i=0$;
hence the maximum $M$ of $\Psi$ can not be attained on the boundary of $\fS$ so
$\cM\subset\fSo$. Furthermore, along any straight line in $\fSo$, each term
in the sum \eqref{Tpsi} is smooth and concave, and at least one of the terms
has a strictly negative second derivative. (For example the term with
$\sigma=\set i$ for any $i$ such that $x_i$ varies along the line.)
It follows that $\psi$ is strictly concave in $\fSo$,
and thus the maximum set $\cM$ cannot contain more than one point.
Hence $\cM=\set{\vx_0}$ for some point $\vx_o\in\fSo$.
It follows by \refL{LM}\ref{LM2} that $\vx_0$ is the unique solution of
\eqref{tt1} 
in $\fSo$, and $\vp_n\to\vx_0$ follows by \refT{TT2} (or \refT{TT}).

Finally, 
use $x_1,\dots,x_{N-1}$ as coordinates on $\fSo$ and 
write $\psix(x_1,\dots,x_{N-1}):=\psi(x_1,\dots,x_{N-1},1-x_1-\dots-x_N)$.
Then the maximum point $\vx_0$ is given by
\begin{equation}\label{disk}
  D\psix :=\Bigpar{\frac{\partial\psix}{\partial x_i}}_{i=1}^{N-1} =0.
\end{equation}
Moreover, the function $\psix$ is concave in $\fSo$, with a strictly
negative second derivative along any line  as shown above; in other words,
the Hessian matrix
$\bigpar{\frac{\partial^2\psix}{\partial x_i\partial x_j}}_{i,j=1}^{N-1} $
is negative definite, and thus non-singular at every point.
It follows from the implicit function theorem that the solution $\vx_0$
of \eqref{disk}
is a smooth function of the parameters $v_\sigma$.
\end{proof}

\subsection{Examples and further results}

\begin{example}[Two parties]\label{ET2}
  Suppose that there are two parties, $A$ and $B$, and
assume $v_A,v_B>0$.
The equation \eqref{tt1} is
  \begin{equation}
	\frac{v_A}{x_A} + \frac{v_{AB}}{x_{AB}}
=
	\frac{v_B}{x_B} + \frac{v_{AB}}{x_{AB}},	
  \end{equation}
which simplifies to $v_A/x_A = v_B/x_B$, so the system
\eqref{tt1}--\eqref{tt2} has the unique solution $x_A=v_A/(v_A+v_B)$,
$x_B=v_A/(v_A+v_B)$. 
\refT{TT} applies and thus
$n_A/n\to x_A=v_A/(v_A+v_B)$; see also \refT{Tvi}. 
This also follows from \refR{RThall}, which for two parties
says that we can ignore the ballots $AB$, leaving only ballots $A$ and $B$,
and then Thiele's method reduces to D'Hondt's for which the result is well
known. 
\end{example}

We continue with some examples with three parties.

\begin{example}
  Supose that there are three parties $A,B,C$, and 5 votes: 
1 $A$, 1 $B$, 1 $C$, 1 $AB$, 1 $AC$.
Then \eqref{tt1} is
\begin{equation}
  \frac{1}{x_A}+\frac{1}{x_A+x_B}+\frac{1}{x_A+x_C}
=
  \frac{1}{x_B}+\frac{1}{x_A+x_B}
=
  \frac{1}{x_C}+\frac{1}{x_A+x_C}.
\end{equation}
\refT{Tvi} applies, and thus \eqref{tt1}--\eqref{tt2} has a unique solution
in $\fSo$, which by symmetry has to satisfy $x_B=x_C$. 
Hence 
\eqref{tt1} simplifies to 
\begin{equation}
  \frac{1}{x_A}+\frac{1}{x_A+x_B}
=
  \frac{1}{x_B}.
\end{equation}
Furthermore, $x_B=x_C=(1-x_A)/2$, and we obtain
\begin{equation}
  \frac{1}{x_A}+\frac{2}{1+x_A}
=
\frac{2}{1-x_A}  
\end{equation}
which yields the quadratic equation $5x_A^2-1=0$.
Hence the maximum point $\vx_0$ is given by $x_A=1/\sqrt 5$,
$x_B=x_C=\frac12(1-1/\sqrt5)$. \refT{TT} yields
$\vp\to\vx_0=\frac{1}{2\sqrt5}(2,\sqrt5-1,\sqrt5-1)$.

The fact that the proportions converge to irrational numbers shows that even
in this simple example, there is no ultimate periodicity in the seat 
assignment. 

\begin{problem}
Is the sequence of seats assigned to $A$ quasiperiodic in some
sense? Can it be described explicitly?  
\end{problem}

More generally, a similar calculation shows that if the number of votes for
$A$ is changed to an arbitrary $v_A>0$ (with the other votes kept the same),
then $n_A/n\to x_A=\sqrt{v_A/(v_A+4)}$.
\end{example}

In the general (non-symmetric) case with 3 parties, \eqref{tt1}--\eqref{tt2}
lead (using Maple)
to a quartic equation for $x_A$, where the coefficients are polynomials of
degree 3 in the  vote numbers $v_\gs$.
We spare the reader the general formula, and give a numerical example.

\begin{example}
    Supose that there are three parties $A,B,C$, and 9 votes: 
1 $A$, 2 $B$, 3 $C$, 1 $AB$, 1 $AC$, 1 $BC$.
\refT{Tvi} applies and shows $\vp\to\vx_0$ for some 
solution $\vx_0=(x_A,x_B,x_C)$ to \eqref{tt1}--\eqref{tt2}.
Maple yields that $x_A$ is a root of
\begin{equation}
 135 x_A^{4}-161 x_A^{3}-22 x_A^{2}+64 x_A-10
=0.
\end{equation}
Numerically, 
$x_A=0.1797714258$, $x_B = 0.341215728$, 
$x_C=0.4790128462$.
\end{example}

\begin{example}[An exceptional case]\label{Eexc1}
Supose that there are three parties $A,B,C$, and 6 votes: 2 $A$, 2 $B$, 1
$AC$, 1 $BC$. Then \eqref{tt1} is
\begin{equation}
  \frac{2}{x_A}+\frac{1}{x_A+x_C}
=
  \frac{2}{x_B}+\frac{1}{x_B+x_C}
=
  \frac{1}{x_A+x_C}+\frac{1}{x_B+x_C}
\end{equation}
and it is easily found that the unique solution that also satisfies
\eqref{tt2} is $(\frac{2}3,\frac{2}3,-\frac{1}3)$. 
This solution lies outside $\fS$, so
\refT{TT} does not apply. However, \refT{TT2} still applies, and it is
easily verified that the maximum set $\cM$ consist of the single point
$(\frac12,\frac12,0)$; thus $p_{An}\to\frac12$, $p_{Bn}\to\frac12$ and
$p_{Cn}\to0$. 

In fact, it is easily seen from \eqref{thwi}
that $C$ will never get any seat, since always at
least one of $W_A$ and $W_B$ is larger than $W_C$. Furthermore, each pair of
sets goes to either $A,B$ or $B,A$, and thus for any even number of seats
$n$, $p_{An}=p_{Bn}=\frac12$ and $p_{Cn}=0$ exactly.  
\end{example}

\begin{example}[Another exceptional case]
  Suppose that there are three parties $A,B,C$ and two votes: 1 $A$ and 1 $BC$.
In this case, 
$\Psi(x_A,x_B,x_C)= x_A(x_B+x_C)=x_A(1-x_A)$, and it is easy to see that
$\cM=\set{(\frac12,x_B,\frac12-x_B):x_B\in[0,\frac12]}$, a line segment.

Indeed, in this case, of each pair of seats, one goes to $A$ and the other
to either $B$ or $C$. If ties are resolved by lot, almost surely 
$\vp_n\to(\frac12,\frac{1}4,\frac{1}4)$, but for other tie-breaking rules,
other limits in $\cM$ are possible, and $\vp_n$ may even oscillate without a
limit, for
example if a tie for seat $n$ is resolved in favour of $B$ when
$\floor{\log_2 n}$ is even, and in favour of $C$ otherwise.
\end{example}

\begin{example}\label{ETpairs}
\def\eta{v_{AC}}
\def\zeta{v_{BC}}
\def\xi{v_{AB}}
  Supose that there are three parties $A,B,C$, and only votes for
  combinations of two parties,
with $\xi,\eta,\zeta>0$ and $\xi+\eta+\zeta=1$.
Then \eqref{tt1} is
\begin{equation}
  \frac{\xi}{x_A+x_B} +  \frac{\eta}{x_A+x_C}
=   \frac{\xi}{x_A+x_B} +  \frac{\zeta}{x_B+x_C}
=   \frac{\eta}{x_A+x_C} +  \frac{\zeta}{x_B+x_C}
\end{equation}
  which yields
\begin{equation}\label{rudbeck}
  \frac{\xi}{x_A+x_B} 
= \frac{\eta}{x_A+x_C}
=  \frac{\zeta}{x_B+x_C}.
\end{equation}
The equations \eqref{rudbeck} and \eqref{tt2} have the unique solution
  \begin{equation}\label{olof}
(x_A,x_B,x_C)=\bigpar{\xi+\eta-\zeta,\xi+\zeta-\eta,\eta+\zeta-\xi}.
  \end{equation}
If the three numbers on the \rhs{} of \eqref{olof} are positive, 
then $\vx_0:=(x_A,x_B,x_C)\in\fSo$, so
\refT{TT}
applies and shows that $\vp\to \vx_0$ given by \eqref{olof}.
Note that \refT{Tvi} does not apply, but nevertheless we have the same
conclusions, with a limit $\vx_0$ that is a smooth function of the vote
numbers by \eqref{olof}. Hence the condition \eqref{vi} is not necessary for
good behaviour.

Suppose now that, say, $\xi=\eta+\zeta$.
Then $\vx_0$ given by \eqref{olof}
has one coordinate 0 and lies thus on the boundary
$\partial\fS$; nevertheless $\ddxi\psi(\vx_0) =0$  for every $i$, 
e.g.\ by \eqref{ddxi3},
and as
remarked after \eqref{cMiff}, this implies $\vx_0\in\cM$.
Furthermore, $\psi$ is strictly concave on $\fS$, and thus $\cM=\set{\vx_0}$.
Thus \refT{TT2} applies and yields $\vp\to\vx_0$ in this case too.

Finally, suppose that $\xi>\eta+\zeta$.
Then \eqref{olof} would yield $x_C<0$, so \eqref{tt1} has no solution in $\fS$.
It is easy too see that the maximum of $\psi$ on $\fS$ is attained on the 
part of the boundary with $x_C=0$, and then we have
$\Psi(x_A,x_B,0)=x_A^{\eta}x_B^{\zeta}(x_A+x_B)^{\xi}
=x_A^{\eta}x_B^{\zeta}$,
leading to the same
  equation as in \refE{ET2} (with some changes in notation) and
$\cM=\set{\vx_0}$ with 
\begin{equation}\label{olov}
  \vx_0=\Bigpar{\frac{\eta}{\eta+\zeta},\frac{\zeta}{\eta+\zeta},0}.
\end{equation}
Again, \refT{TT2} applies and yields $\vp\to\vx_0$.

In fact, it is easy to see that if $\xi\ge\eta+\zeta$, then party $C$ will
never get any seat by Thiele's method (we omit the details).
Hence, we may in this case ignore $C$ and the result follws by \refE{ET2}. 
(Note that in the case $\xi=\eta+\zeta$, \eqref{olof} and \eqref{olov} yield
the same result.)
\end{example}

\begin{example}
  Supose that there are three parties $A,B,C$, and no votes $AC$ or $BC$.
Then \eqref{tt1} becomes
\begin{align}
  \frac{v_A}{x_A}=\frac{v_B}{x_B},
&&&
\frac{v_A}{x_A}+\frac{v_{AB}}{x_A+x_B}=\frac{v_C}{x_C}
\end{align}
which leads to 
\begin{equation}
\frac{v_A+v_B+v_{AB}}{x_A+x_B}=\frac{v_C}{x_C}
\end{equation}
and thus to $x_C=v_C/\vqq$ and, \eg, 
$x_A=\frac{v_A}{v_A+v_B}(1-x_C)$.
This is generalized in \refT{Tblock}.
\end{example}

\begin{theorem}
  \label{Tblock}
Suppose that the $N$ parties are partitioned into a number of blocks 
$\cP_1, \cP_2, \dots$,
and that each voter votes for a subset of one of the blocks.
Assume also, for simplicity, assume that \eqref{vi} holds.
Then $\vp$ converges to a limit $\vx=(x_1,\dots,x_N)$ 
such that if $\cP_j$ is one of the blocks,
$q_j=\sum_{\sigma\subseteq\cP_j} v_\sigma/\vqq$ is the proportion of votes for this
block, and
$x'_i$ is the asymptotic proportion of seats assigned to party $i$ if the 
election is restricted to the parties in $\cP_j$ only (with the same votes
for them),  then
$x_i=qx'_i$ for every party $i\in \cP_j$.
\end{theorem}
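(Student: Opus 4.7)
The plan is to verify directly that the vector $\vx=(x_i)$ defined by $x_i:=q_j x'_i$ for $i\in\cP_j$ lies in $\fSo$ and satisfies the system \eqref{tt1}--\eqref{tt2}; uniqueness (via \refT{Tvi}) then identifies $\vx$ as the limit $\vx_0$ and yields $\vp_n\to\vx$. Since every party has an individual vote, every block satisfies the hypothesis of \refT{Tvi}, so both the full election and every restricted election (on a block) have a unique limit vector in the corresponding open simplex; hence each $x'_i>0$ and $q_j>0$, and consequently $\vx\in\fSo$.

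The condition \eqref{tt2} is immediate: $\sum_i x_i=\sum_j q_j\sum_{i\in\cP_j}x'_i=\sum_j q_j=1$, using $\sum_{i\in\cP_j}x'_i=1$ for each block. For \eqref{tt1}, fix $i\in\cP_j$ and observe that any $\sigma\in\PP$ with $i\in\sigma$ must be contained in $\cP_j$, because by hypothesis every ballot with positive weight lies inside a single block. Moreover, for such $\sigma$, $x_\sigma=\sum_{k\in\sigma}q_j x'_k=q_j x'_\sigma$. Therefore
\begin{equation}\label{blockeq}
  \sum_{\sigma\ni i}\frac{v_\sigma}{x_\sigma}
  =\sum_{\substack{\sigma\ni i\\ \sigma\subseteq\cP_j}}\frac{v_\sigma}{q_j x'_\sigma}
  =\frac{1}{q_j}\sum_{\substack{\sigma\ni i\\ \sigma\subseteq\cP_j}}\frac{v_\sigma}{x'_\sigma}.
\end{equation}
Applying \refT{Tvi} to the restricted election on $\cP_j$ (with the same votes $v_\sigma$, $\sigma\subseteq\cP_j$), the vector $(x'_i)_{i\in\cP_j}$ solves \eqref{tt1} for that subproblem; let $C_j$ denote the common value of those sums. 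Then \eqref{blockeq} equals $C_j/q_j$.

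The remaining, and only slightly nontrivial, step is to show that $C_j/q_j$ does not depend on $j$. The key trick is the Euler-type homogeneity identity: multiply the restricted \eqref{tt1} for party $i'\in\cP_j$ by $x'_{i'}$ and sum over $i'\in\cP_j$, exchanging the order of summation to get
\begin{equation}
  C_j=C_j\sum_{i'\in\cP_j}x'_{i'}=\sum_{\sigma\subseteq\cP_j}\frac{v_\sigma}{x'_\sigma}\sum_{i'\in\sigma}x'_{i'}=\sum_{\sigma\subseteq\cP_j}v_\sigma=q_j\vqq,
\end{equation}
where $\vqq:=\sum_\sigma v_\sigma$. Hence $C_j/q_j=\vqq$, which is independent of $j$. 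Thus \eqref{blockeq} yields $\sum_{\sigma\ni i}v_\sigma/x_\sigma=\vqq$ for every party $i$, verifying \eqref{tt1}. By \refT{Tvi} the system has a unique solution in $\fSo$, so $\vx=\vx_0$ and $\vp_n\to\vx$. The only possible subtlety is precisely that homogeneity computation; once it is in place, everything else is routine.
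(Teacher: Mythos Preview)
Your proof is correct, but it takes a somewhat different route from the paper's. The paper argues via the optimization formulation: writing $z_j=\sum_{i\in\cP_j}x_i$ and $y_i=x_i/z_j$, one has the decomposition
\[
\psi(\vx)=\sum_j \vqq_j\log z_j+\sum_j\psi^j(\vy^j),
\]
where $\vqq_j=\sum_{\sigma\subseteq\cP_j}v_\sigma$ and $\psi^j$ is the $\psi$-function for block $\cP_j$; the terms can then be maximized independently, giving $z_j=\vqq_j/\vqq$ and $\vy^j=\vx'^{\,j}$ at once. Your argument instead verifies the first-order system \eqref{tt1}--\eqref{tt2} directly, and your ``Euler identity'' computation showing $C_j=q_j\vqq$ is exactly the homogeneity of $\Psi$ (equivalently, the relation $\ddi\psi=\vqq$ at the maximum, cf.\ \eqref{ddxi3}). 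The paper's decomposition is perhaps more conceptual and explains \emph{why} the answer has a product structure; your approach is more hands-on and has the minor advantage of never needing to invoke $\psi$ or $\Psi$ explicitly, working entirely through \refT{Tvi} and the equations. Both are short and essentially equivalent in depth.
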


\begin{proof}
  It suffices to consider the case of two blocks, $\cP_1$ and $\cP_2$.
For $j=1,2$, let $N_j:=|\cP_j|$,  the number of parties in block $\cP_j$, 
let $\vqq_j:=\sum_{\sigma\subseteq\cP_j}v_\sigma$, the number of
votes for block $\cP_j$, 
let $z_j:=\sum_{i\in \cP_j}x_i$ and, for $i\in\cP_j$,
$y_i:=x_i/z_j$. If $\sigma\subseteq\cP_j$, then thus
$x_\sigma=\sum_{i\in\sigma}z_jy_i=z_jy_{\sigma}$.
Consequently, if $\vy^j:=(y_i)_{i\in\cP_j}\in\fS_{N_j}$, and $\psi^j$
denotes $\psi$ 
defined as in \eqref{Tpsi} but for the votes $\sigma\subseteq\cP_j$ only, then
\begin{equation}
  \psi(\vx)=\sum_j\sum_{\sigma\subseteq\cP_j} v_\sigma(\log z_j+\log y_i)
=\sum_j \vqq_j\log z_j + \sum_j \psi^j(\vy^j).
\end{equation}
Evidently, this is maximized by maximizing each $\psi^j(\vy^j)$ separately,
and maximizing $\vqq_1\log z_1+\vqq_2\log z_2$ subject to $z_1+z_2=1$;
the latter leads to $z_j=\vqq_j/\vqq$, and the result follows.
\end{proof}

\begin{remark}
  \refT{Tblock} is very natural and satisfying. It means for example that a
  party cannot influence its shares of seats by tactically splitting into
  several parts and distributing votes among combinations of them in some
  clever way.

However, this asymptotic result does \emph{not} hold for small numbers of
seats. In fact, one of the main problems with Thiele's method when used in
Sweden in the 1910's (see the Historical note in \refS{S:intro})
was the possibility of such
manoeuvres. An example by \cite{Tenow1912} 
(see also \cite{SJV9}) is 3 seats and the votes 37
$ABC$, 13 $KLM$;
in this case Thiele's method reduces to D'Hondt's, and gives two seats to
$ABC$ and  one to $KLM$. However, if the $ABC$ voters split their votes as 1
$A$, 9 $AB$, 9 $AC$, 9 $B$, 9 $C$ then all three seats go to $ABC$.
 (Of course, Tenow considers individual candidates and not
parties, but for this example the result is the same.) 

It thus seems that Thiele's method behaves better asymptotically than for
a finite number of seats.
\end{remark}

\newcommand\AAP{\emph{Adv. Appl. Probab.} }
\newcommand\JAP{\emph{J. Appl. Probab.} }
\newcommand\JAMS{\emph{J. \AMS} }
\newcommand\MAMS{\emph{Memoirs \AMS} }
\newcommand\PAMS{\emph{Proc. \AMS} }
\newcommand\TAMS{\emph{Trans. \AMS} }
\newcommand\AnnMS{\emph{Ann. Math. Statist.} }
\newcommand\AnnPr{\emph{Ann. Probab.} }
\newcommand\CPC{\emph{Combin. Probab. Comput.} }
\newcommand\JMAA{\emph{J. Math. Anal. Appl.} }
\newcommand\RSA{\emph{Random Struct. Alg.} }
\newcommand\ZW{\emph{Z. Wahrsch. Verw. Gebiete} }
\newcommand\DMTCS{\jour{Discr. Math. Theor. Comput. Sci.} }

\newcommand\AMS{Amer. Math. Soc.}
\newcommand\Springer{Springer-Verlag}
\newcommand\Wiley{Wiley}

\newcommand\vol{\textbf}
\newcommand\jour{\emph}
\newcommand\book{\emph}
\newcommand\inbook{\emph}
\def\no#1#2,{\unskip#2, no. #1,} 
\newcommand\toappear{\unskip, to appear}

\newcommand\arxiv[1]{\texttt{arXiv:#1}}
\newcommand\arXiv{\arxiv}

\def\nobibitem#1\par{}


\begin{thebibliography}{999}

\bibitem[Balinski och Young(2001)]{BY}
M.\ L.\ Balinski and H.\ P.\ Young,
\emph{Fair Representation}.
2nd ed.,
Brookings Institution Press, Washington, D.C., 2001.

\bibitem[Billingsley(1968)]{Billingsley}
P.\ Billingsley,
\book{Convergence of Probability Measures}.
Wiley, New York, 1968.

\bibitem{brem}
J.\ Br\'emont, Dynamics of injective quasi-contractions, 
{\em Ergodic Theory $\&$ Dynam.\ Systems} {\bf 26} (2006), 19--44.

\bibitem{bruin}
H.\ Bruin and J.H.B.\ Deane, Piecewise contractions are asymptotically periodic, {\em Proc.\ Amer.\ Math.\ Soc.\  } {\bf 137}(4) (2009), 1389--1395.

\bibitem[Bugeaud(1993)]{bugeaudCR}
Y.\ Bugeaud, 
Dynamique de certaines applications contractantes, lin{\'e}aires par morceaux, sur $[0,1)$. 
\emph{C. R. Acad. Sci. Paris S{\'e}r. I Math.} \textbf{317} (1993), no.\ 6,
575--578. 

\bibitem[Bugeaud and Conze(1999)]{conze}
Y.\ Bugeaud, J.-P.\ Conze, 
Calcul de la dynamique de transformations lin\'eaires contractantes mod 1 et arbre de Farey,  
{\em Acta Arithm.\ } {\bf LXXXVIII}.3 (1999), 201--218.

\bibitem[Bugeaud and Conze(2000)]{BC-noise}
Y.\ Bugeaud and  J.-P.\ Conze, Dynamics of some contracting linear functions modulo 1,
\emph{Noise, Oscillators and Algebraic Randomness (Chapelle des Bois, 1999)}, 
379--387, Lecture Notes in Phys., \textbf{550}, Springer, Berlin, 2000.

\bibitem{cat}
E.\ Catsigeras, P.\ Guiraud, A.\ Meyroneinc and E.\ Ugalde,
On the asymptotic properties of piecewise contracting maps, 
{\em Dyn.\ Sys.\ } {\bf 31} 
(2016), no.\ 2, 107--135.

\bibitem[Coutinho(1999)]{cout}
R.\ Coutinho, {\em Din\^amica Simb\'olica Linear}, Ph.D.\ Thesis, Instituto
Superior T\'ecnico, Universidade de Lisboa, 1999. 

\bibitem{cout2}
R.\ Coutinho, B.\ Fernandez, R.\ Lima, and A.\ Meyroneine, Discrete time piecewise affine models 
of genetic regulatory networks, {\em J.\ Math.\ Biol.\ } {\bf 52} (2006), 524--570.

\bibitem{DHondt1878}
V.\ D'Hondt,
\emph{Question {\'e}lectorale: La repr{\'e}sentation proportionnelle des partis,
par un {\'e}lecteur}.
Bruylant, 
Brussels, 1878. 

\bibitem{DHondt1882}
V.\ D'Hondt,
\emph{Syst{\`e}me pratique et raisonn{\'e} de repr{\'e}sentation proportionnelle},
Muquardt, Brussels, 1882. 

\bibitem[Ding and Hemmer(1987)]{DingHemmer}
E.\ J.\ Ding and 
P.\ C.\ Hemmer,
Exact treatment of mode locking for a piecewise linear map. 
\emph{J. Statist. Phys.} \textbf{46} (1987), no. 1-2, 99--110.

\bibitem{feely}
O.\ Feely and L.\ O.\ Chua, The effect of the integrator leak in 
$\Sigma$--$\Delta$ modulation, 
{\em IEEE Transactions on Circuits and Systems} {\bf 38} (1991), 1293--1305.

\bibitem{bet1913}
S.\ von Friesen, G.\ Appelberg, I.\ Bendixson, E.\ \phragmen,
\emph{Bet{\"a}nkande ang{\aa}ende {\"a}ndringar i g{\"a}llande best{\"a}mmelser
  om den  proportionella valmetoden.}
Commission report, 3 December 1913, Stockholm, 1913.

\bibitem{electoralSystems}
M.\ Gallagher and P.\ Mitchell (Eds.), \book{The Politics of Electoral Systems}.
Oxford Univ. Press, Oxford, 2005.

\bibitem{gamb}
J.-M.\ Gambaudo and C.\ Tresser, On the dynamics of quasi-contractions, 
{\em Bull.\ Braz.\ Math.\ Soc.\ } {\bf 19}(1) (1988), 61--114.

\bibitem[Hardy and Wright(1960)]{HardyW} G.\ H.\ Hardy and E.\ M.\  Wright, 
\emph{An Introduction to the Theory of Numbers}. 4th ed., Oxford, at the
Clarendon Press, 1960. 

\bibitem{SJ262}
S.\ Janson,
Asymptotic bias of some election methods. 
\emph{Annals of Operations Research}, \vol{215} (2014), no. 1, 89--136.

\bibitem{SJV9}
S.\ Janson,
Phragm{\'e}n's and Thiele's election methods,
preprint, 2016: \arxiv{1611.08826}.

\bibitem[Laurent and Nogueira(2017+)]{laurent-nogueira}
M.\ Laurent and A.\ Nogueira, Rotation number of interval contracted rotations, preprint 2017: \arXiv{1704.05130v2}.

\bibitem[Loxton and van der Poorten(1977)]{lox}
J.\ H.\ Loxton and A.\ J.\ van der Poorten, Arithmetic properties of certain functions in 
several variables III, {\em Bull.\ Austral.\ Math.\ Soc.\ } {\bf 16} (1977), 15--47.

\bibitem[Mora and Oliver(2015)]{MoraO}
X.\ Mora and M.\ Oliver,
Eleccions mitjan{\c c}ant el vot d'aprovaci{\'o}.
El m{\`e}tode de Phragm{\'e}n i algunes variants.
\emph{Butllet{\'i} de la Societat Catalana de Matem{\`a}tiques}
\vol{30} (2015), no. 1, 57--101.

\bibitem{nog}
A.\ Nogueira and B.\ Pires, Dynamics of piecewise contractions of the interval,  {\em Ergodic Theory $\&$ Dynam.\ Systems}  {\bf 35}(7) (2015),  2198--2215.

\bibitem{nog2}
A.\ Nogueira, B.\ Pires and R.A.\ Rosales, Topological dynamics of piecewise $\lambda$-affine maps, {\em Ergodic Theory $\&$ Dynam.\ Systems}, to appear.

\bibitem[Phragm{\'e}n(1894)]{Phragmen1894}
E.\ Phragm{\'e}n,
Sur une m{\'e}thode nouvelle pour r{\'e}aliser, dans les {\'e}lections, la
repr{\'e}sentation proportionelle des partis.
\emph{{\"O}fversigt av Kongl. Vetenskaps-Akademiens F{\"o}rhandlingar 1894},
N:o 3, Stockholm, 133--137.

\bibitem[Phragm{\'e}n(1895)]{Phragmen1895}
E.\ Phragm{\'e}n,
\emph{Proportionella val. En valteknisk studie.}
Svenska sp\"orsm{\aa}l 25, Lars H\"okersbergs f{\"o}rlag, Stockholm, 1895.

\bibitem[Phragm{\'e}n(1896)]{Phragmen1896}
E.\ Phragm{\'e}n,
Sur la th\'eorie des {\'e}lections multiples,
\emph{{\"O}fversigt av Kongl. Vetenskaps-Akademiens F{\"o}rhandlingar 1896},
N:o 3, Stockholm, 181--191.

\bibitem[Phragm{\'e}n(1899)]{Phragmen1899}
E.\ Phragm{\'e}n,
Till fr{\aa}gan om en proportionell valmetod.
\emph{Statsvetenskaplig Tidskrift} \vol2 (1899),
nr 2,
297--305.
\url{http://cts.lub.lu.se/ojs/index.php/st/article/view/1949}

\bibitem[Pukelsheim(2014)]{Pukelsheim}
F.\ Pukelsheim, \emph{Proportional Representation. Apportionment Methods and Their
  Applications}, Springer, Cham, Switzerland, 2014.

\bibitem[Sainte-Lagu\"e(1910)]{StL}
A.\ Sainte-Lagu\"e,  
La repr{\'e}sentation proportionnelle et la m{\'e}thode des moindres carr{\'e}s.  
\emph{Ann. Sci. {\'E}cole Norm. Sup.} (3) \vol{27} (1910), 529--542.
\\
Summary:
\emph{Comptes rendus
hebdomadaires des s{\'e}ances
de l'Acad{\'e}mie des sciences},
\vol{151}
(1910), 377--378.

\bibitem[Tenow(1912)]{Tenow1912}
N.\ B.\ Tenow,
Felaktigheter i de Thieleska valmetoderna.
\emph{Statsvetenskaplig Tidskrift} 1912, 145--165.

\bibitem{Thiele}
T.\ N.\ Thiele, 
Om Flerfoldsvalg.
\emph{Oversigt over det Kongelige Danske Videnskabernes Selskabs Forhandlinger} 1895,
K{\o}benhavn, 1895--1896, 415--441.

\bibitem{veer}
P.\ Veerman, Symbolic dynamics of order-preserving orbits, {\em Physica D}
{\bf 29} (1987), 191--201.

\end{thebibliography}
\end{document}